\newcommand\Ret{\mathrm{Ret}}
\newcommand\adm{\mathrm{adm}}
\newcommand{\RR}{\mathbb{R}}
\newcommand{\NN}{\mathbb{N}}
\newcommand{\TT}{\mathbb{T}}
\renewcommand{\SS}{\mathbb{S}}
\newcommand{\Cc}{\mathcal{C}}
\newcommand{\Ff}{\mathcal{F}}
\newcommand{\Hh}{\mathcal{H}}
\newcommand{\Mm}{\mathcal{M}}
\newcommand{\Pp}{\mathcal{P}}
\newcommand{\Qq}{\mathcal{Qq}}
\newcommand{\Tt}{\mathcal{T}}
\newcommand{\Xx}{\mathcal{X}}
\renewcommand{\d}{\mathrm{d}}
\newcommand{\id}{\mathrm{id}}
\newcommand{\h}{\mathsf{h}}
\renewcommand{\div}{\mathrm{div}}
\newcommand{\BL}{\mathrm{BL}}
\newcommand{\Lip}{\mathrm{Lip}}
\DeclareMathOperator{\tr}{tr}
\DeclareMathOperator{\diag}{diag}
\DeclareMathOperator{\vol}{vol}
\DeclareMathOperator{\dist}{dist}
\DeclareMathOperator{\spt}{spt}
\newtheorem{theorem}{Theorem}[section]
\newtheorem{proposition}[theorem]{Proposition}
\newtheorem{lemma}[theorem]{Lemma}
\newtheorem{corollary}[theorem]{Corollary}
\newtheorem{definition}[theorem]{Definition}
\title{Sparse Optimization on Measures \\ with Over-parameterized Gradient Descent}
\author{
  L\'ena\"ic Chizat\thanks{CNRS, Laboratoire de Math\'ematiques d'Orsay, Universit\'e Paris-Saclay, 91405, Orsay, France.}
}
\begin{document}

\maketitle

\begin{abstract}
Minimizing a convex function of a measure with a sparsity-inducing penalty is a typical problem arising, e.g., in sparse spikes deconvolution or two-layer neural networks training. We show that this problem can be solved by discretizing the measure and running non-convex gradient descent on the positions and weights of the particles. For measures on a $d$-dimensional manifold and under some non-degeneracy assumptions, this leads to a global optimization algorithm with a complexity scaling as $\log(1/\epsilon)$ in the desired accuracy $\epsilon$, instead of $\epsilon^{-d}$ for convex methods. The key theoretical tools are a local convergence analysis in Wasserstein space and an analysis of a perturbed mirror descent in the space of measures. Our bounds involve quantities that are exponential in $d$ which is unavoidable under our assumptions.
\end{abstract}

\section{Introduction}\label{sec:intro}
Finding parsimonious descriptions of complex observations is an important problem in machine learning and signal processing.  In its simplest form, this task boils down to searching for an element in a Hilbert space $\Ff$ that is close to a certain $f_0\in \Ff$ --- the observations --- and that is a linear combination of a few elements from a parameterized set $\{\phi(\theta)\}_{\theta \in \Theta}\subset \Ff$ --- the parsimonious description. This can be formulated as a minimization problem where the linear combination is expressed through an unknown measure $\nu$ and the distance to $f_0$ is quantified using a smooth convex loss function $R:\Ff\to \RR$, such as the square loss $R(f)=\frac12 \Vert f-f_0\Vert^2_\Ff$.  The problem to solve is then
\begin{equation}\label{eq:objective}
J^*\coloneqq \min_{\nu \in \Mm_+(\Theta)} J(\nu), \qquad \text{}\qquad J(\nu) :=R\left( \int_\Theta \phi(\theta) \d \nu(\theta)\right) + \lambda  \nu (\Theta)
\end{equation}
where $\Mm_+(\Theta)$ is the set of nonnegative measures $\nu$ on the parameter space $\Theta$ with finite total mass $\nu(\Theta)<\infty$ and $\lambda> 0$ is the regularization strength. This formulation also covers minimization over signed measures with total variation regularization, by replacing $\Theta$ with the disjoint union of two copies of $\Theta$ where $\phi$ takes opposite values, see Appendix~\ref{app:signed}. A large body of research has exhibited the favorable properties of minimizers of such problems~\cite{bach2017breaking, de2012exact, mairal2014sparse} with a statistical or variational viewpoint, showing in particular that $\lambda$ favors sparser solutions and increases stability as it gets larger, at the expense of introducing a stronger bias. The present paper deals with the optimization aspect: our goal is to design algorithms that return $\epsilon$-accurate solutions with a guaranteed computational complexity. When the set $\Theta$ is a finite set, this is a finite dimensional convex optimization problem that is well understood~\cite{boyd2004convex, bach2012optimization}. However, convex approaches are generally inefficient when $\Theta$ is a continuous space, such as a $d$-dimensional manifold, where the need to discretize the space leads to a complexity scaling as $\epsilon^{-d}$ in the accuracy $\epsilon$. We consider the following setting:
\begin{enumerate}
\item[] {\sf (A1)}  $\Theta$ is a compact $d$-dimensional Riemannian manifold without boundaries. The functions $\phi:\Theta\to \Ff$ and $R:\Ff\to \RR_+$ are twice Fr\'echet differentiable, with locally Lipschitz second-order derivatives, and $\nabla R$ is bounded on sublevel sets.
\end{enumerate}
The algorithm that we analyze in this paper is simple to describe: 
\emph{initialize with a discrete measure and run gradient descent on the positions and weights of the particles}. We will see that when the problem~\eqref{eq:objective} admits sparse solutions and is non-degenerate, this over-parameterized non-convex gradient descent has a complexity scaling as $\log(1/\epsilon)$ in the accuracy $\epsilon$. We make the following contributions:
\begin{itemize}
\item[--] In Section~\ref{sec:presentation}, we introduce the \emph{conic particle gradient descent} algorithm to solve optimization problems in the space of measures and discuss several of its interpretations.
\item[--] In Section~\ref{sec:local}, we show under under certain non-degeneracy assumptions that there is a sublevel of $J$ starting from which this algorithm converges exponentially fast to minimizers.
\item[--] In Section~\ref{sec:global}, we show that for suitable choices of gradient and initialization, this algorithm converges to global minimizers. The proof combines the  result of Section~\ref{sec:local} with an analysis of a perturbed mirror descent in the space of measures. The number of iterations required to reach an accuracy $\epsilon$ is polynomial in the characteristics of the problem and logarithmic in~$\epsilon$. In contrast, the required number of particles depends exponentially on the dimension $d$, which is unavoidable under our assumptions. 
\item[--] We report results of numerical experiments in Section~\ref{sec:numerics}, where the various insights brought by our analysis about local and global behaviors are investigated.
\end{itemize}

\subsection{Examples of applications}\label{sec:applications} 
\begin{figure}
\centering
\begin{subfigure}{0.3\linewidth}
\centering
\includegraphics[scale=0.5]{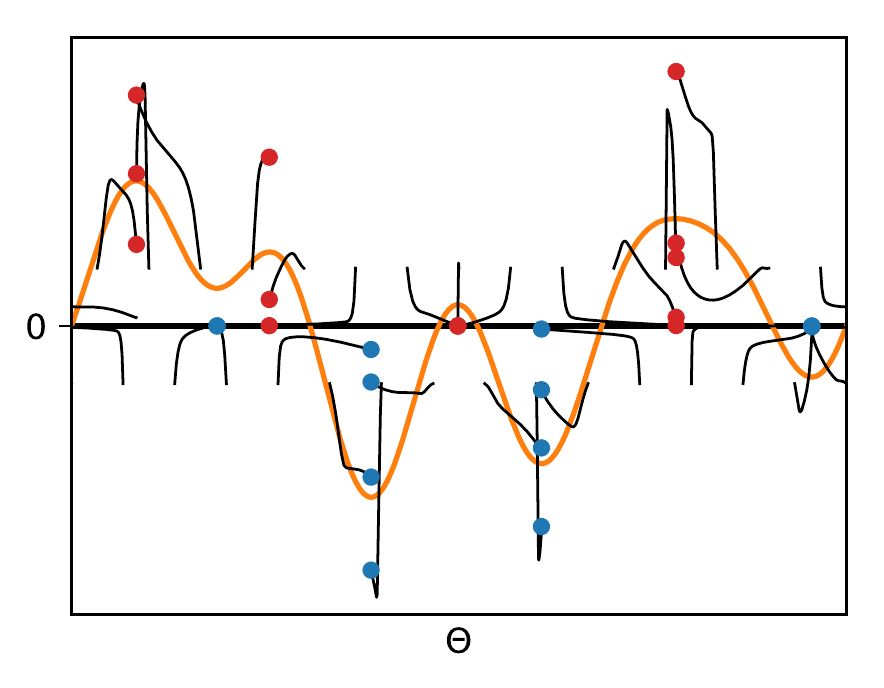}
\caption{Sparse deconvolution}
\end{subfigure}%
\begin{subfigure}{0.3\linewidth}
\centering
\includegraphics[scale=0.5]{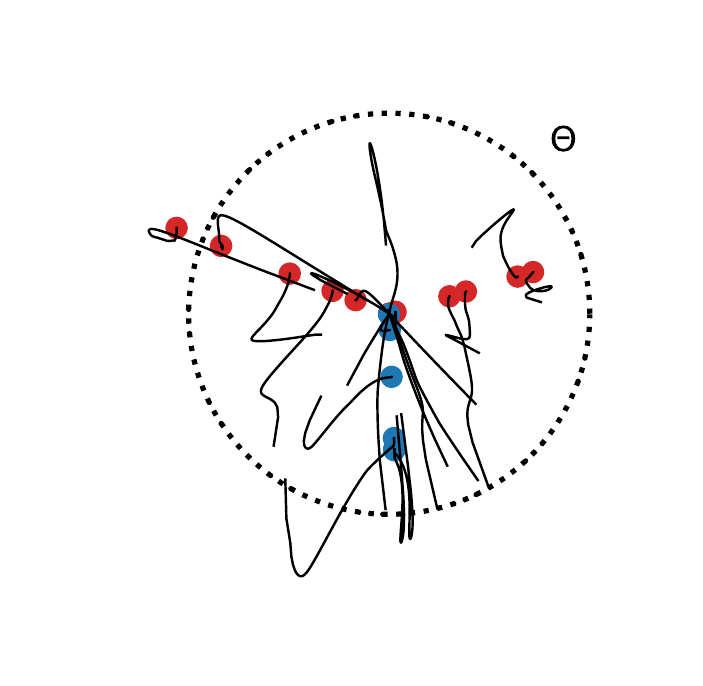}
\caption{Two-layer ReLU neural net}
\end{subfigure}%
\begin{subfigure}{0.3\linewidth}
\centering
\includegraphics[scale=0.5]{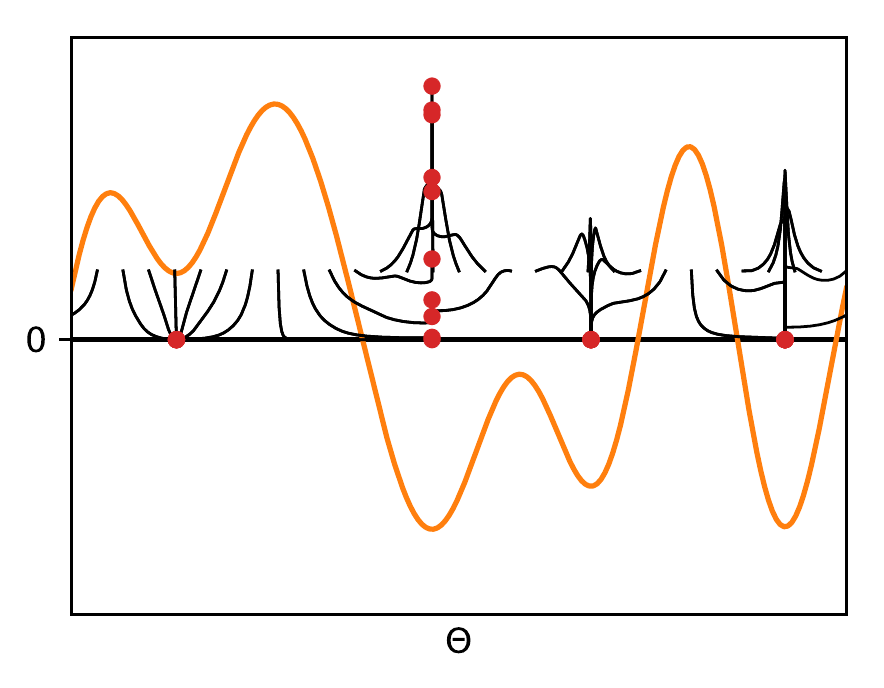}
\caption{Generic optimization}
\end{subfigure}
\caption{Three examples of conic particle gradient descents (Algorithm~\ref{alg:gradientbased}). Trajectories of particles in black and their limits in red (positive mass) or blue (negative mass). }
\label{fig:cover}
\end{figure}
As the problem of finding the simplest linear decomposition over a continuous dictionary is a very natural one, problems of the form~\eqref{eq:objective} appear in a large variety of situations, see~\cite{boyd2017alternating} for an extensive list. In this paper, our numerical illustrations are focused on two applications, chosen for their practical importance and also because they illustrate the variety of behaviors that can be encountered. We also mention a third example to emphasize on the extreme generality --- and thus the intrinsic limits --- of our analysis. These three cases are illustrated on Figure~\ref{fig:cover}.

\paragraph{Sparse deconvolution.} In this application, we want to recover a signal that consists of a mixture of spikes/impulses on $\Theta$ given a noisy and filtered observation $f_0$ in the space $\Ff = L^2(\Theta)$ of square-integrable real-valued functions on $\Theta$. When one defines $\phi(\theta): x \mapsto \psi( x -\theta)$ the translations of the filter impulse response $\psi$ and $R$ the squared loss, solving~\eqref{eq:objective} allows to reconstruct the mixture of impulses with some guarantees, see e.g.~\cite{duval2015exact,de2012exact,poon2018geometry}. In this  typically low dimensional application, solving~\eqref{eq:objective} to a high accuracy is crucial. Both the signed and nonnegative case have practical motivations (see Appendix~\ref{app:signed} for how to handle the signed case). Figure~\ref{fig:cover}-(a) illustrates the behavior of particle gradient descent for the signed case on the $1$-torus, where the observed signal is shown in orange. Figure~\ref{fig:deconv2D} illustrates the unsigned case on the $2$-torus. 

\paragraph{Two-layer neural networks.} Here the goal is to select, within a specific class, a function that maps features in $\RR^{d-1}$ to labels in $\RR$ from the observation of a joint distribution of features and labels. This corresponds to $\Ff$ being the space of real-valued functions on $\RR^{d-1}$ which are square-integrable under the distribution of features, $R$ being e.g., the quadratic or the logistic loss function, and $\phi(\theta) : x\mapsto \sigma (\sum_{i=1}^{d-1} \theta_i x_i + \theta_d)$ with an activation function $\sigma : \RR \to \RR$. Common choices are the sigmoid function or the rectified linear unit~\cite{haykin1994neural,goodfellow2016deep}. In this application, $d$ is typically large and it is not clear yet how to verify the non-degeneracy assumptions a priori, so our global convergence bounds are not useful. Still, the local analysis in Section~\ref{sec:local} gives insights on the local behavior in the over-parameterized regularized setting and explains well the behavior observed in numerical experiments. With the ReLU activation, the method we analyze boils down to the classical gradient descent algorithm, see the remark in Section~\ref{sec:conic} about the $2$-homogeneous case. Figure~\ref{fig:cover}-(b)  illustrates this case, by plotting the trajectories of $\vert a_i\vert\cdot b_i\in \RR^{2}$ where $a_i\in \RR$ is the output weight of neuron $i$ and $b_i\in \RR^2$ its hidden weights (the color represents the sign of $a_i$).

\paragraph{Non-convex optimization.} Lastly, the minimization of any smooth function on a manifold $\phi:\Theta\to \RR$ is covered by~\eqref{eq:objective}, as proved in Appendix~\ref{app:genericnonconvex}. For this problem, our algorithm is analogous to running independently several gradient-based minimization with diverse initializations, because the various particles simply follow the gradient field of $\phi$ and only interact through their masses. This case is illustrated on Figure~\ref{fig:cover}-(c) where the function to minimize (here on the $1$-torus) is plotted in orange.  We recover the standard fact that random search as to be complemented with local search if one wants complexity that is reasonable in the precision. We stress that this is not the situation that motivates our analysis. Instead, we are interested in the case of general interactions between the particles, which is when we obtain novel insights.

\begin{figure}
\centering
\begin{subfigure}{0.33\linewidth}
\centering
\includegraphics[scale=0.6,trim=0.5cm 0.7cm 0cm 0.2cm, clip]{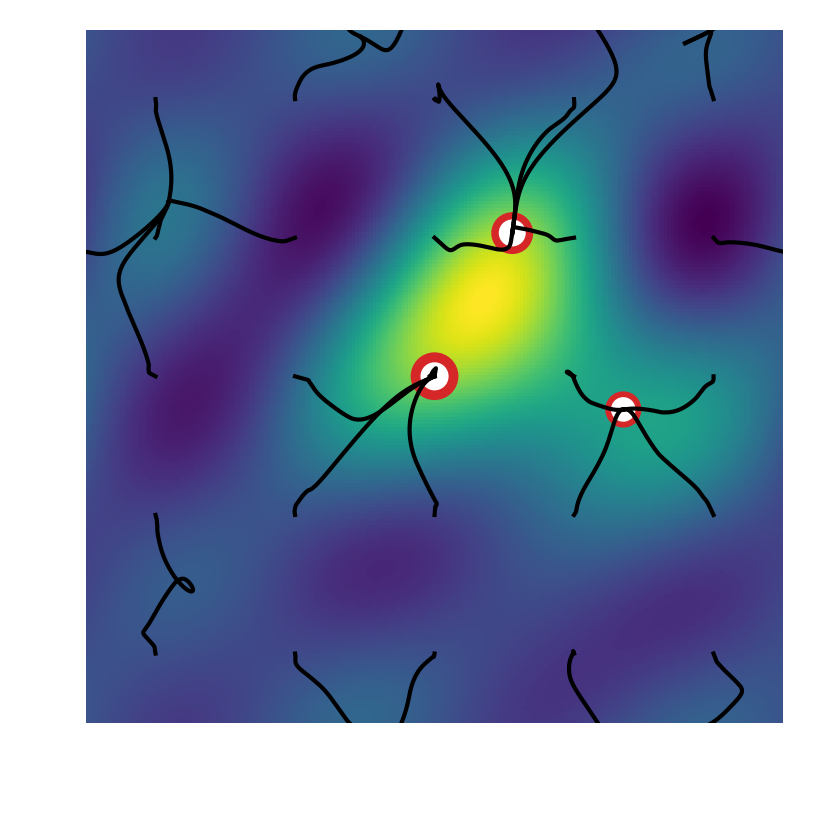}
\caption{$m=25$ particles}
\end{subfigure}%
\begin{subfigure}{0.33\linewidth}
\centering
\includegraphics[scale=0.6,trim=0.5cm 0.7cm 0cm 0.2cm, clip]{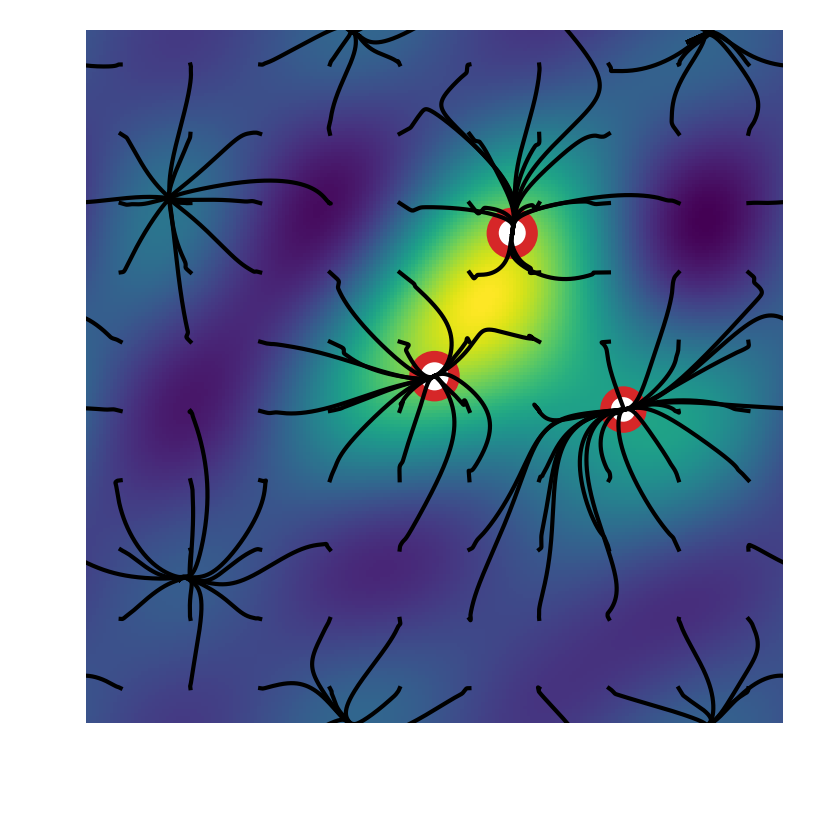}
\caption{$m=100$ particles}
\end{subfigure}%
\caption{Illustration of 2D sparse deconvolution with conic particle gradient descent (Algorithm~\ref{alg:gradientbased}). Ground truth spikes in white, spatial trajectories in black and final location/mass in red.}
\label{fig:deconv2D}
\end{figure}

\subsection{Related work}\label{sec:related}

\paragraph{Sparse optimization on measures.}  
Problems with the structure~\eqref{eq:objective} have a long history in optimization when $\Theta$ is discrete, and is typically solved with ISTA~\cite{daubechies2004iterative}, mirror descent~\cite{nemirovsky1983problem,beck2003mirror} or variants of those algorithms. When $\Theta$ is continuous, the one dimensional case can sometimes be dealt with specific algorithms~\cite{candes2014towards,champagnat2019atom}. In higher dimensions, the classical algorithms are conditional gradient algorithms (also known as Franck-Wolfe)~\cite{bredies2013inverse,denoyelle2018sliding,boyd2017alternating}, moment methods~\cite{de2017exact,catala2017low,dumitrescu2007positive} and adaptive sampling/exchange algorithms~\cite{flinth2017exact,flinth2019linear}. Often, these algorithms are complemented with non-convex updates on the particle positions, which considerably improves their behavior. Given an initial condition that is close to the optimum and with the same structure (i.e.\ without over-parameterization), the local convergence for non-convex gradient descent is studied in~\cite{traonmilin2018basins, flinth2019linear}. 

\paragraph{Wasserstein gradient flows for optimization.}  The dynamics of two-layer neural networks optimization when the number of hidden units  grows unbounded is studied in~\cite{nitanda2017stochastic, chizat2018global,mei2018mean,rotskoff2018parameters,sirignano2019mean}. This series of work has led to various insights related to stochastic fluctuations and global convergence. The present paper can be seen as a quantitative counterpart to~\cite{chizat2018global}, although we consider a more restrictive setting\footnote{The algorithm we study in this paper corresponds to the ``$2$-homogeneous case'' in~\cite{chizat2018global}. Also,~\cite{chizat2018global} allows non-smooth regularizers and does not require non-degeneracy.}. A global rate of convergence is obtained in~\cite{wei2019regularization} but for a modified dynamic where particles are re-sampled at each iteration. Instead, we focus on the basic case where particles are  only sampled once at the beginning of the algorithm. It should be mentioned that our analysis is different from the line of research on lazy over-parameterized models~\cite{chizat2019lazy}  initiated by~\cite{du2018gradient,jacot2018neural}, which does not apply to the regularized case and to the unsigned case. Finally, in the parametric case where the unknown measure is assumed to belong to a finite dimensional probability model, Wasserstein natural gradient~\cite{amari1998natural, li2018natural, chen2018wasserstein} or accelerated versions~\cite{wang2019accelerated} have been proposed. Our analysis is however of non-parametric nature because the number of parameters is not fixed a priori in the analysis.

\paragraph{Related techniques.} Our framework involves the theory of optimization on manifolds~\cite{absil2009optimization} and of Wasserstein gradient flows~\cite{ambrosio2008gradient}. Some inspiration and interpretations of the algorithm under consideration come from unbalanced optimal transport theory~\cite{liero2018optimal, kondratyev2016new, chizat2018interpolating} and in particular, from the lifting construction in~\cite{liero2018optimal}. Finally, our local analysis includes a functional and a gradient \L{}ojasiewicz inequality of order $2$ in Wasserstein space. Such inequalities were studied in~\cite{hauer2019kurdyka,blanchet2018family} for displacement convex functions, which does not cover our setting.

\subsection{Notation}
The set of signed (resp.~nonnegative) finite Borel measures on a metric space $(\Xx,\dist)$ is denoted by $\Mm(\Xx)$ (resp. $\Mm_+(\Xx)$). The relative entropy, a.k.a.~Kullback-Leibler divergence, is defined for $\nu_1,\nu_2 \in \Mm_+(\Xx)$ as
$\Hh (\nu_1,\nu_2) = \int_{\Xx} \log(\d\nu_1/\d\nu_2) \d\nu_1 - \nu_1(\Xx)+\nu_2(\Xx)$ if $\nu_1$ is absolutely continuous w.r.t.~$\nu_2$, and $+\infty$ otherwise. The $p$-Wasserstein distance on the set $\Pp_p(\Xx)$ of probability measures with finite $p$-th moment is defined, for $\mu_1,\mu_2 \in \Pp_p(\Xx)$ as 
\[
W_p(\mu_1,\mu_2) = \left(\min_{\gamma \in \Pi(\mu_1,\mu_2)} \int \dist(x_1,x_2)^p\d\gamma(x_1,x_2)\right)^{1/p}
\]
where $\Pi(\mu_1,\mu_2)$ is the set of measures on $\Xx \times \Xx$ with marginals $\mu_1$ and $\mu_2$. The distance $W_\infty$ between compactly supported probabilities is defined as the limit of $W_p$ as $p\to \infty$ and can be directly defined as $W_\infty(\mu_1,\mu_2) = \inf_{\gamma \in \Pi(\mu_1,\mu_2)} \max_{(x_1,x_2)\in \spt \gamma} \dist(x_1,x_2)$~\cite{santambrogio2015optimal}. We also define the Bounded-Lipschitz norm for a continuous function $\psi:\Xx\to \RR$ as $\Vert \psi \Vert_{\BL} = \Vert \psi\Vert_\infty+\Lip(\psi)$ where $\Lip(\psi)$ is the Lipschitz constant of $\psi$ and its dual norm on $\Mm(\Xx)$ as 
$\Vert \nu \Vert^*_{\BL} \coloneqq \sup_{\Vert \varphi \Vert_{\BL}\leq 1} \int \varphi \, \d\nu$. For a Riemannian manifold $\Theta$, we denote by $T_\theta\Theta$ the tangent space of $\Theta$ at $\theta$ and by $\langle \cdot,\cdot\rangle_\theta : (T_\theta \Theta)^2 \to \RR_+$ the metric at $\theta$.

\section{Particle gradient descent}\label{sec:presentation}
\subsection{General case}
Consider a smooth increasing bijection $h:\RR_+\to \RR_+$ (such as a power function $r\mapsto r^p$) and a number of particles $m \in \NN^*$. The idea behind particle gradient-based algorithms is to parameterize the unknown measure $\nu$ as $\frac1m \sum_{i=1}^m h(r_i) \delta_{\theta_i}$ and to perform gradient-based optimization on the corresponding objective
\begin{equation}\label{eq:objectivediscrete}
F_{m}((r_1,\theta_1),\dots,(r_{m},\theta_m)) \coloneqq  R\left(\frac1{m} \sum_{i=1}^{m} h(r_i)\phi(\theta_i)\right) + \frac\lambda{m} \sum_{i=1}^{m} h(r_i),
\end{equation}
where the parameters $(r_i,\theta_i)$ of each particle belong to $\Omega \coloneqq \RR_+ \times \Theta$ endowed with a specific choice of metric. Clearly, if $J$ admits a minimizer that is a mixture of $m^\star$ atoms with $m^\star\leq m$, then it is sufficient to minimize $F_m$ from Eq.~\ref{eq:objectivediscrete} for solving \eqref{eq:objective}. While~\eqref{eq:objectivediscrete} is finite dimensional, it is typically non-convex with possibly some strict local minima. Still, when $R$ is convex and for $h(r)=r^p$ for $p\in \{1,2\}$, the message from~\cite{chizat2018global}  (see Theorem~\ref{th:consistency}) is that  solving~\eqref{eq:objectivediscrete} to global optimality with first-order methods is still possible by using over-parameterization, i.e.\ choosing $m$ much larger than $m^\star$.  Such a method involve various key hyper-parameters which role is discussed throughout the paper. They include (i) the choice of the function $h$ (ii) the choice of the metric on $\Omega^m$ and (iii) the choice of the initialization.

\paragraph{Expression of the gradient.} Under~{\sf(A1)}, the objective $J$, seen as a function on the space $\Mm(\Theta)$ endowed with the total variation norm, is Fr\'echet-differentiable. Its differential at $\nu\in \Mm(\Theta)$ can be represented by the function $J'_\nu : \Theta \to \RR$ given by
\begin{equation}\label{eq:firstvariation}
J'_\nu(\theta) = \left\langle \phi(\theta),\nabla R\left(\int_\Theta \phi(\theta) \d\nu(\theta)\right)\right\rangle_{\Ff} +\lambda,
 \end{equation}
 in the sense that for any $\sigma \in\Mm(\Theta)$, it holds
$\frac{d}{d\epsilon} J(\nu + \epsilon \sigma)\vert_{\epsilon=0} = \int_{\Theta} J'_\nu(\theta) \d\sigma(\theta)$. Now, consider a metric on $(\Omega^*)^m$ that is the average $(1/m)\sum_{i=1}^m \langle \cdot, \cdot \rangle_{(r_i,\theta_i)}$ of metrics on each factor $\Omega^*\coloneqq \RR_+^*\times \Theta$, where $\RR_+^*$ is the set of positive real numbers, of the form
\begin{equation}\label{eq:metric}
\langle(\delta r_1,\delta \theta_1),(\delta r_2,\delta \theta_2) \rangle_{(r,\theta)} = \alpha(r)^{-1} \delta r_1\delta r_2 +\beta(r)^{-1}\langle \delta \theta_1,\delta\theta_2\rangle_{\theta} 
\end{equation}
where $\alpha$ and $\beta$ are smooth functions $\RR_+^*\to \RR_+^*$ to be specified\footnote{Extension of the metric and gradients to the whole of $\Omega$ can be made on a case by case basis, see Section~\ref{sec:conic}.}, $ (r,\theta) \in \Omega^*$, $\delta r_1, \delta r_2 \in \RR$ and $\delta \theta_1, \delta \theta_2 \in T_\theta\Theta$. Using the fact that gradients are characterized by the relation $\d F_m(x)(\delta x) = \langle \nabla F_m(x),\delta x\rangle$, we get that the gradient of $F_m$ is given, in components, by 
\begin{align}\label{eq:gradientgeneric}
\left\{
\begin{aligned}
\nabla_{r_i} F_m((r_i,\theta_i)_{i=1}^m) &= \alpha(r_i)h'(r_i) J'_{\nu}(\theta_i)\\
\nabla_{\theta_i} F_m ((r_i,\theta_i)_{i=1}^m) &= \beta(r_i) h(r_i) \nabla J'_{\nu}(\theta_i)
\end{aligned}
\right. && \text{where} && \nu = \frac1m \sum_{i=1}^m h(r_i)\delta_{\theta_i}.
\end{align}

\paragraph{Lifted problem in Wasserstein space.} Assume now that $h$ has at most quadratic growth, and that the metric is defined on the whole of $\Omega$. One can then see the discrete problem~\eqref{eq:objectivediscrete} as a discretization of a problem on the space $\Pp_2(\Omega)$ of probability measures on $\Omega$ with finite second moment endowed with the Wasserstein-$2$ metric given by
\begin{equation}\label{eq:liftedproblem}
F^\star = \min_{\mu \in \Pp_2(\Omega)} F(\mu)\quad \text{where}\quad F(\mu) \coloneqq \left( \int_\Omega h(r) \phi(\theta) \d\mu(r,\theta) \right) +  \lambda \int_\Omega h(r) \d\mu(r,\theta).
\end{equation}
This point of view leads to insights on the properties of $F_m$ that are independent of $m$, which is crucial for our theoretical analysis. For a measure $\mu\in \Pp_2(\Omega)$, we define following~\cite{liero2018optimal} the \emph{homogeneous projection} operator $\h:\Pp_2(\Omega)\to \Mm_+(\Theta)$ where  $\h\mu$ is characterized by
\begin{align*}
\int_{\Theta} \varphi(\theta) \d(\h\mu)(\theta) = \int_{\Omega} h(r)\varphi(\theta)\d\mu(r,\theta)
\end{align*}
for any continuous function $\varphi:\Theta\to \RR$. With this operator, we simply have $F(\mu)=J(\h\mu)$. 

\paragraph{Gradient flow.} There are various ways to optimize~\eqref{eq:objectivediscrete} with first order methods. Instead of directly focusing on a specific method, we first consider the gradient flow of $F_m$, as it is known that (stochastic) gradient descent~\cite{gautschi1997numerical, kushner2003stochastic} approximates this dynamics. Let us call $x = (r_i,\theta_i)_{i=1}^m \in \Omega^m$ the variable of $F_m$. A gradient flow of $F_m$ is an absolutely continuous curve 
$( x(t))_{t\geq 0}$ in $\Omega^m$ that satisfies 
 \[
 x'(t) = -\nabla F_m(x(t))
 \]
  for $t\geq 0$, with the gradient given in Eq.~\eqref{eq:gradientgeneric}. Note that if $h'(r)\alpha(r)^{-1}$ does not tend to $0$ as $r\to 0$, then the non-negativity constraint on $r$ should be explicitly enforced, which requires the notion of subgradient flows, see~\cite{chizat2018global} for details in our setting.

\paragraph{Wasserstein gradient flow.} It is also possible to directly study the optimization dynamics in the space $\Pp_2(\Omega)$ for the functional $F$ of Eq.~\eqref{eq:liftedproblem}. For a measure $\nu \in \Mm_+(\Theta)$, consider the vector field on $\Omega$ with expression
\begin{equation*}
g_\nu(r,\theta) = \big(\alpha(r) h'(r) J'_{\nu}(\theta),\beta(r) h(r)\nabla J'_{\nu}(\theta)\big) \in \RR \times T_\theta \Theta.
\end{equation*}
We refer to $g_{\h \mu}$ as the Wasserstein gradient of $F$ at $\mu$ (this notation emphasizes that it only depends on $\mu$ through $\h \mu$). Gradient flows of $F_m$ are particular cases of \emph{Wasserstein gradient flows} of $F$. The latter are defined as the absolutely continuous curves $(\mu_t)_{t\geq 0}$ in $\Pp_2(\Omega)$ that satisfy
\begin{equation}\label{eq:WGF}
\partial_t \mu_t =  \div (\mu_t g_{\h\mu_t})
\end{equation}
in the weak sense, which means that for any differentiable function $\varphi: \Omega \to \RR$, it holds $\frac{\d}{\d t}\left(\int \varphi \,\d\mu_t \right) = - \int \nabla \varphi \cdot g_{\h\mu_t}\d\mu_t$, for almost every $t\geq 0$, see~\cite{santambrogio2015optimal}. This is a proper extension of the notion of gradient flow for $F_m$ in the sense that if $x(t)=(r_i(t),\theta_i(t))_{i=1}^m$ is a gradient flow of $F_m$ then it can be directly checked that  $t\mapsto \mu_t = \frac1m \sum_{i=1}^m \delta_{(r_i(t),\theta_i(t))}$ is a Wasserstein gradient flow of $F$.

\subsection{The conic case}\label{sec:conic}
As seen in Eq.~\eqref{eq:gradientgeneric}, the choice of the homogeneity degree and of the metric on $\Omega$ determine a specific way to combine the \emph{vertical}  and the \emph{spatial} components of the gradient (along the variable $r$ and $\theta$, respectively). From now on, we focus on what we refer to as the \emph{conic case}, which corresponds to the following assumption:
\begin{itemize}
\item [\sf (A2)] The mass parameterization is $h(r)=r^2$ and the metric on $\Omega^*$ is of the form Eq.~\eqref{eq:metric} with $(\alpha(r),\beta(r))=(\alpha,\beta/r^2)$ for some $\alpha,\beta>0$.
\end{itemize}
The corresponding geodesic distance is $\dist((r_1,\theta_2),(r_1,\theta_2))^2 = r_1^2 +r_2^2 -2r_1r_2\cos_{\pi}(\dist(\theta_1,\theta_2))$ where $\cos_\pi(z) = \cos(\min\{\pi,z\})$. This metric can be extended as a proper metric on $\widetilde \Omega$, defined as the set $\Omega$ where the subset $\{0\}\times \Theta$ is identified to a single point, known as the \emph{cone metric}, which is the canonical way to define a metric on $\tilde \Omega$~\cite{burago2001course}. In our context, identifying $\{0\}\times \Theta$ to a single point is desirable because a particle located in this set is a ``dead'' particle carrying no mass.

Plugging the metric into Eq.~\eqref{eq:gradientgeneric} gives the gradient (extended by continuity to $\{0\}\times \Theta$)
\begin{align*}
\left\{
\begin{aligned}
\nabla_{r_i} F_m((r_i,\theta_i)_{i=1}^m) &= 2\alpha r_i J'_{\nu}(\theta_i)\\
\nabla_{\theta_i} F_m ((r_i,\theta_i)_{i=1}^m) &=  \beta \nabla J'_{\nu}(\theta_i)
\end{aligned}
\right. && \text{where} && \nu = \frac1m \sum_{i=1}^m r_i^2\delta_{\theta_i},
\end{align*}
and the Wasserstein gradient is represented by the vector field
\begin{equation}\label{eq:wassersteingradient}
g_\nu(r,\theta) = \big(2 \alpha r J'_{\nu}(\theta), \beta \nabla J'_{\nu}(\theta)\big)  \in \RR \times T_\theta \Theta, \forall (r,\theta)\in \Omega.
\end{equation}
Existence of Wasserstein gradient flows under {\sf (A1-2)}, for any initialization in $\Pp_2(\Omega)$ can be proved along the same lines as in~\cite{chizat2018global}, see details in Appendix~\ref{app:existence}. Abstracting away its geometric derivation, the important aspects about our choice of gradient~\eqref{eq:wassersteingradient} are that its leads updates in $r$ which are multiplicative  and updates in $\theta$ which are independent of $r$. These two properties are crucial for our local convergence analysis  (Section~\ref{sec:local}). Moreover, multiplicative updates enjoy favorable convergence rates (Section~\ref{sec:global}).
The resulting structure and dynamics admits several interpretations.

\paragraph{Transport-growth interpretation.}
First, the projection $\nu_t = \h\mu_t$ of the gradient flow solves an advection-reaction equation. Importantly, this dynamics depends on $\mu_t$ only via the initialization $\h\mu_0$, which is a property specific to the conic setting.
\begin{proposition} \label{prop:projectedGF} Under {\sf (A1-2)}, let $(\mu_t)_{t\geq 0}$ be a Wasserstein gradient flow for $F$, with $\mu_0\in \Pp_2(\Omega)$. Then $\nu_t = \h\mu_t$ satisfies (in the weak sense)
\begin{equation}\label{eq:projectedGF}
\partial_t \nu_t = - 4 \alpha \nu_t J'_{\nu_t} + \beta \div (\nu_t \nabla J'_{\nu_t}).
\end{equation}
\end{proposition}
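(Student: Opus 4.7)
The plan is to derive the projected evolution equation directly from the weak formulation of the Wasserstein gradient flow~\eqref{eq:WGF}, by testing against lifts of test functions on $\Theta$. Specifically, given a smooth test function $\varphi:\Theta\to\RR$, I would consider the function $\psi(r,\theta) := r^2 \varphi(\theta)$ on $\Omega$. By definition of the homogeneous projection $\h$ (with $h(r)=r^2$ under {\sf (A2)}), we have $\int_\Theta \varphi \, \d\nu_t = \int_\Omega \psi \, \d\mu_t$, so the evolution of $\nu_t = \h\mu_t$ is determined by that of $\int \psi \, \d\mu_t$.

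Next I would compute $\nabla \psi(r,\theta) = (2r\varphi(\theta),\; r^2 \nabla \varphi(\theta)) \in \RR \times T_\theta \Theta$ and pair it with the Wasserstein gradient $g_{\nu_t}$ from~\eqref{eq:wassersteingradient}, obtaining
\begin{equation*}
\nabla\psi(r,\theta) \cdot g_{\nu_t}(r,\theta) = 4\alpha \, r^2 \varphi(\theta) J'_{\nu_t}(\theta) + \beta \, r^2 \langle \nabla\varphi(\theta),\nabla J'_{\nu_t}(\theta)\rangle_\theta.
\end{equation*}
Applying the weak form of~\eqref{eq:WGF} and again using the definition of $\h$ to rewrite integrals of $r^2 \chi(\theta) \, \d\mu_t$ as integrals of $\chi(\theta) \, \d\nu_t$, I obtain
\begin{equation*}
\frac{\d}{\d t} \int_\Theta \varphi \,\d\nu_t = -4\alpha \int_\Theta \varphi \, J'_{\nu_t}\,\d\nu_t - \beta \int_\Theta \langle \nabla\varphi,\nabla J'_{\nu_t}\rangle \,\d\nu_t,
\end{equation*}
which is precisely the weak formulation of the advection-reaction equation~\eqref{eq:projectedGF}.

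The main technical obstacle is that $\psi(r,\theta) = r^2\varphi(\theta)$ is unbounded on $\Omega$, so it is not a legitimate test function for the weak Wasserstein-gradient-flow equation as stated. I would handle this by a standard truncation/approximation argument: replace $r^2$ by $\chi_K(r) := \min(r^2,K^2)\cdot \eta_K(r)$ for a smooth cutoff $\eta_K$, apply the weak formulation to $\chi_K(r)\varphi(\theta)$, and pass to the limit $K\to\infty$. Here I would invoke the fact, established along the lines of~\cite{chizat2018global} and recalled in Appendix~\ref{app:existence}, that Wasserstein gradient flows under {\sf (A1-2)} stay in $\Pp_2(\Omega)$ with the second moment of $\mu_t$ controlled on compact time intervals (since $J'_\nu$ and $\nabla J'_\nu$ are bounded on sublevel sets and the flow decreases $F$). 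This uniform integrability of $r^2$ makes the limit $K\to\infty$ routine and justifies the formal computation above.

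Finally, since the resulting identity holds for every smooth $\varphi:\Theta\to\RR$, it is exactly the statement that $\nu_t$ solves~\eqref{eq:projectedGF} in the weak sense. I would also emphasize (as the proposition does) the important structural observation that the right-hand side depends on $\mu_t$ only through $\nu_t = \h\mu_t$, which is the specific feature of the conic/$2$-homogeneous setting that allows the projected flow to be self-contained.
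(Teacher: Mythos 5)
Your proof is correct and follows essentially the same route as the paper: test the weak formulation of~\eqref{eq:WGF} against $\psi(r,\theta)=r^2\varphi(\theta)$, which is exactly $\h^*\varphi$, and use the definition of the homogeneous projection to rewrite the resulting integrals over $\Omega$ as integrals over $\Theta$. The only difference is that you spell out the gradient computation and supply the truncation argument to legitimize the unbounded test function $r^2\varphi$, a technical point the paper's one-line proof passes over silently.
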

\begin{proof}
For any differentiable function $\varphi: \Theta\to \RR$, since $\mu_t$ is a Wasserstein gradient flow it holds
\begin{equation*}
\frac{\d}{\d t}\left(\int \varphi \,\d\nu_t \right) 
=- \int \langle \nabla (\h^*\varphi), g_{\h\mu_t}\rangle_{(r,\theta)} \d\mu_t = - \int (4 \alpha \varphi J'_{\nu_t} + \beta \nabla \varphi \cdot \nabla J'_{\nu_t})\d\nu_t,
\end{equation*}
which is the definition of weak solutions for~\eqref{eq:projectedGF}.
\end{proof}
When $\beta=0$, we recover the gradient flow of $J$ for the Fisher-Rao (or Hellinger) metric, which also corresponds to continuous time mirror descent on $\Mm_+(\Theta)$ for the entropy mirror map~\cite{krichene2015accelerated}. When $\alpha=0$, this is the gradient flow of $J$ for the Wasserstein metric~\cite{ambrosio2008gradient}. When $\alpha,\beta>0$, this is the gradient flow of the functional $J$ for the Wasserstein-Fisher-Rao metric, a.k.a.\ Hellinger-Kantorovich metric, see e.g.~\cite{gallouet2017jko}. Under Assumption~{\sf (A2)}, the dynamics~\eqref{eq:WGF} and~\eqref{eq:projectedGF} are directly related by Proposition~\ref{prop:projectedGF}. In the rest of this paper, we present the statements in terms of the projected dynamics $\nu_t$, although they also could be stated in terms of $\mu_t$. Note that an alternative discretization of the dynamic~\eqref{eq:projectedGF} was proposed in~\cite{rotskoff2019global} using particle birth-death.

\paragraph{Spherical coordinates interpretation.}
Consider the case when $\Theta=\SS^d$ is the $d$-dimensional sphere in $\RR^{d+1}$. Then, the space $\widetilde \Omega$ endowed with the cone metric and $\RR^{d+1}$ are isometric, through the spherical to Euclidean change of coordinates $(r,\theta)\mapsto r\theta$. Identifying $\widetilde \Omega$ with $\RR^{d+1}$ through this isometry, the class of functions of the form $r^p\phi(\theta)$ on $\widetilde\Omega$, for $p>0$ is simply the class of $p$-homogeneous functions on $\RR^{d+1}$. 

It follows that the conic setting we consider boils down, when $\Theta=\SS^d$ and $p=2$, to objectives defined on  $\Pp_2(\RR^{d+1})$ of the form
\begin{align}
F(\mu) = R\left( \int_{\RR^{d+1}} \psi(u)\d\mu(u) \right) + \lambda \int_{\RR^{d+1}} \Vert u\Vert^2_2 \d\mu(u)
\end{align}
with $\psi:\RR^{d+1}\to \Ff$ positively $2$-homogeneous. Moreover, the Wasserstein gradient on $\Pp_2(\widetilde\Omega)$ with the cone metric can be identified with the Wasserstein gradient on $\Pp_2(\RR^{d+1})$  with the Euclidean metric. One can thus understand our choice of conic metric and $p=2$ as a way to emulate the structure of $2$-homogeneous problems on $\RR^{d+1}$ in more general situations.

\paragraph{Asymptotic global convergence.} Let us recall the global convergence result of~\cite[Thm. 3.3]{chizat2018global}, in our setting and notations. We give in Appendix~\ref{app:existence} a simplified proof, enabled by our stronger smoothness assumptions. 
\begin{theorem}\label{th:consistency}
Under {\sf (A1-2)}, assume that $R$ is convex, that $\phi$ is $d$-times continuously differentiable, that $\nu_0\in \Mm_+(\Theta)$ has full support and that the projected gradient flow $(\nu_t)_{t\geq 0}$ converges weakly to some $\nu_{\infty} \in \Mm_+(\Theta)$. Then $\nu_{\infty}$ is a global minimizer of $J$. 
\end{theorem}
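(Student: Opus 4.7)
The plan is to reduce the statement to checking the first-order optimality conditions at $\nu_\infty$, and then use the multiplicative $r$-dynamics together with the full support of $\nu_0$ to exclude violations of those conditions. Write $(\mu_t)$ for a Wasserstein gradient flow lifting $(\nu_t)$ via $\nu_t=\h\mu_t$. \textbf{Step 1 (optimality via convexity).} Since $R$ is convex and $\nu\mapsto\int_\Theta\phi\,\d\nu$ is linear, $J$ is convex on the cone $\Mm_+(\Theta)$. Standard first-order conditions on a convex cone read: $\nu^\star$ minimizes $J$ if and only if $J'_{\nu^\star}\geq 0$ on $\Theta$ and $J'_{\nu^\star}=0$ on $\spt\nu^\star$. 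Compactness of $\Theta$ together with smoothness of $R$ and continuity of $\phi$ turn weak convergence $\nu_t\weakto\nu_\infty$ into uniform $C^1$ convergence $J'_{\nu_t}\to J'_{\nu_\infty}$ on $\Theta$. Moreover, $J$ is non-increasing along the flow, and $\lambda\nu_t(\Theta)\leq J(\nu_t)\leq J(\nu_0)$ gives a uniform bound on total mass.

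\textbf{Step 2 (nonnegativity of $J'_{\nu_\infty}$).} Suppose by contradiction that $J'_{\nu_\infty}(\theta^\star)<-2\eta<0$ for some $\theta^\star\in\Theta$ and $\eta>0$. Using $C^d$ regularity of $\phi$ together with Sard's theorem, I would first replace $\theta^\star$ by a non-degenerate local minimum $\theta^\circ$ of $J'_{\nu_\infty}$ at which $J'_{\nu_\infty}<-\eta$, whose basin $V$ for the field $-\nabla J'_{\nu_\infty}$ is open and attracting. Uniform $C^1$ convergence ensures that for $t$ beyond some $T$, the field $-\nabla J'_{\nu_t}$ also traps $V$ and $J'_{\nu_t}\leq -\eta/2$ on $\bar V$. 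Full support of $\nu_0$ and smoothness of the characteristic flow on $[0,T]$ yield $\nu_T(V)>0$. For any lifted particle $(r_t,\theta_t)$ with $\theta_T\in V$ one then has $\theta_t\in V$ for every $t\geq T$, together with
\[
\tfrac{\d}{\d t}r_t^2=-4\alpha\, r_t^2\, J'_{\nu_t}(\theta_t)\geq 2\alpha\eta\, r_t^2,
\]
hence $\nu_t(V)\geq\nu_T(V)\,e^{2\alpha\eta(t-T)}\to\infty$, contradicting the mass bound from Step 1.

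\textbf{Step 3 (vanishing on $\spt\nu_\infty$) and conclusion.} A symmetric argument, replacing exponential growth by exponential decay, shows $J'_{\nu_\infty}\leq 0$ on $\spt\nu_\infty$: if $J'_{\nu_\infty}(\theta_0)>2c>0$ at some $\theta_0\in\spt\nu_\infty$, the multiplicative update forces $r_t^2$ to decay at rate $e^{-2\alpha c(t-T)}$ along characteristics passing through a neighborhood of $\theta_0$, and the Lipschitz transport field only redistributes mass, so $\nu_\infty$ cannot charge any neighborhood of $\theta_0$, a contradiction. Combined with Step 2 this produces both first-order conditions, and by convexity (Step 1) $\nu_\infty$ is a global minimizer of $J$.

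The main obstacle is Step 2: the full-support assumption is about $\nu_0$ and must be propagated through a nonlinear, non-autonomous flow, and the transport part $-\beta\nabla J'_{\nu_t}$ could a priori push mass out of a neighborhood of $\theta^\star$ faster than the Fisher-Rao term can amplify it. The resolution is to exploit $\phi\in C^d$, via Sard's theorem, to locate a genuine attracting local minimum of $J'_{\nu_\infty}$ inside the negative region; uniform $C^1$ convergence then turns this into a robust trap for the characteristics of $-\nabla J'_{\nu_t}$ for all $t$ large, which is precisely what allows the exponential-growth argument to close.
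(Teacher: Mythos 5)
Your overall strategy — reduce to the first-order optimality conditions, then rule out $J'_{\nu_\infty}<0$ somewhere and $J'_{\nu_\infty}>0$ on the support by exploiting the multiplicative dynamics of the masses — is the same as the paper's, and Step 1 is essentially Proposition~\ref{prop:optimalitycondition} together with the fact that weak convergence of $\nu_t$ upgrades to uniform $C^1$ convergence of $J'_{\nu_t}$ by compactness of $\Theta$.

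There is, however, a genuine gap in Step 2. You invoke Sard's theorem (via $\phi\in C^d$, hence $J'_{\nu_\infty}\in C^d$) to ``replace $\theta^\star$ by a non-degenerate local minimum $\theta^\circ$ of $J'_{\nu_\infty}$'' with an open attracting basin. Sard does not yield this: it asserts that the set of \emph{critical values} has Lebesgue measure zero, not that critical \emph{points} can be chosen non-degenerate. The function $J'_{\nu_\infty}$ could have a degenerate global minimum (a flat trough, for instance), in which case your $\theta^\circ$ does not exist and the ``basin'' is not well-defined. The correct use of Sard — and this is exactly what the paper does in Lemma~\ref{lem:escape} — is to pick a \emph{regular value} $v$ with $v^\star < v < 0$ (say $v\in[2v^\star/3,v^\star/3]$), and to work with the sublevel set $K_v=(J'_{\nu_\infty})^{-1}(\,]{-\infty},v]\,)$. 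Since $v$ is regular, $\partial K_v$ is a smooth compact hypersurface on which $\nabla J'_{\nu_\infty}$ points strictly outward, and by uniform $C^1$ convergence the fields $-\nabla J'_{\nu_t}$ trap $K_v$ for $t$ large. Your exponential-growth computation then applies verbatim to $\nu_t(K_v)$ and closes the contradiction.

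Two further remarks. First, your reliance on the full support of $\nu_T$ is justified (the finite-time characteristic flow is a diffeomorphism, so $\nu_T = $ pushforward of a multiplicatively rescaled $\nu_0$, which inherits full support); the paper makes the same point. Second, your Step 3 departs from the paper's route: the paper derives $J'_{\nu_\infty}=0$ on $\spt\nu_\infty$ from the stationary-point identity $\int |J'_{\nu_\infty}|^2\,\d\nu_\infty=0$, which follows from the energy-dissipation inequality $\int_0^\infty \Vert g_{\nu_t}\Vert^2_{L^2(\nu_t)}\d t<\infty$ and the weak convergence. Your exponential-decay alternative is viable, but as written the phrase ``the Lipschitz transport field only redistributes mass, so $\nu_\infty$ cannot charge any neighborhood of $\theta_0$'' does not account for mass flowing \emph{into} a neighborhood of $\theta_0$; to make it rigorous you would again need a superlevel-set trap (take $v$ a regular value in $(0,J'_{\nu_\infty}(\theta_0))$, note that $-\nabla J'_{\nu_t}$ pushes mass out of $\{J'_{\nu_\infty}>v\}$, and then the decay estimate plus portmanteau gives the contradiction). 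Once the Sard invocation in Step 2 is repaired and the trap argument in Step 3 is made explicit, the proof is correct.
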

This theorem can be understood as a consistency result for conic particle gradient descent. It also raises several questions: under which conditions does $\nu_\infty$ exist? Can we guarantee a convergence rate ? Can we relax the full support condition on the initialization? In this paper, we answer positively to these questions in the particular case of non-degenerate sparse problems.

\subsection{Conic particle gradient descent algorithm}\label{sec:gradientdescent}

 \paragraph{Cone compatible retractions.}
 The definition of discrete gradient descent in a Riemannian setting requires to introduce the notion of retraction. In general, a retraction on a Riemannian manifold $\mathcal{M}$ with tangent bundle $T\mathcal{M}$ is a smooth map $\Ret:T\mathcal{M}\to \mathcal{M}$ such that its restriction $\Ret_x$ to $T_{x}\mathcal{M}$ satisfies $\Ret_{x}(0) = x$ and $\d\Ret_{x}(0) = \id_{T_{x}\mathcal{M}}$, see~\cite[Def. 4.1.1]{absil2009optimization}. In our case, we need to slightly adapt the definition to deal with the cone structure.
\begin{definition}\label{def:compatibleretraction}
We say that $\Ret: \Omega \times (\RR\times T\Theta)\to \Omega$ is a retraction compatible with the cone structure, if it satisfies the following:
\begin{enumerate}[(i)]
\item \emph{(Retraction property)} It is a proper retraction on $\Omega^* \coloneqq \RR_+^* \times \Theta$. It is not necessarily defined everywhere but there exists $C>0$ such that $\Ret_{(r,\theta)}(\delta r, \delta \theta)$ is defined as long as $\max\{ \vert \delta r\vert/r,\Vert \delta \theta\Vert_\theta\}  < C$.
\item \emph{(Zero preserving)} It satisfies $\Ret_{(0,\theta)}(\delta r,\delta \theta) = (0,f( \theta, \delta \theta))$ for some arbitrary measurable $f$.
\item \emph{(Homogeneity)} For any $r,\tilde r \in \RR_+^*$, $\theta \in \Theta$, $\delta r \in \RR$ and $\delta \theta \in T_\theta\Theta$ satisfying $\max\{ \vert \delta r\vert/r,\Vert \delta \theta\Vert_\theta\}  < C$, denoting $(r_1,\theta_1)= \Ret_{(r,\theta)}(r\delta r, \delta \theta) $ and $(r_2,\theta_2)= \Ret_{(\tilde r,\theta)}(\tilde r \delta r, \delta \theta)$, then $\theta_1=\theta_2$ and $\tilde r \cdot r_1 =r \cdot r_2$.
\end{enumerate}
\end{definition}

 These properties are satisfied in the following  examples, where $\widetilde \Ret$ denotes any retraction defined on $\Theta$ (we give them names for future reference):
\begin{itemize}
\item[--] the \emph{canonical} retraction $\Ret_{(r,\theta)}(\delta r, \delta \theta) = (r+\delta r, \widetilde \Ret_{\theta}(\delta \theta))$  (here $C=1$);
\item[--] the \emph{mirror} retraction $\Ret_{(r,\theta)}(\delta r, \delta \theta) = (r\exp(\delta r/r), \widetilde \Ret_{\theta}(\delta \theta))$, which allows to recover a version of mirror descent when $\delta \theta=0$ (here $C=+\infty$);
\item[--] the \emph{induced} retraction when $\Theta$ is the $d$-sphere, which is the retraction induced by the isometric embedding into $\RR^{d+1}$, see Section~\ref{sec:conic}. It is defined as $\Ret_{(r,\theta)} (\delta r,\delta \theta)= (\Vert u\Vert, u/\Vert u\Vert )$ where $u = r\theta+\theta \delta r+r\delta \theta \in \RR^{d+1}$ (here $C=1$). With this retraction, the iterates of gradient descent on $\Omega$ with the cone metric can be identified with the iterates of (Euclidean) gradient descent in $\RR^{d+1}$.
\end{itemize}

\paragraph{Gradient descent in $\Pp_2(\Omega)$.} Given a retraction $\Ret$ compatible with the cone structure, we define the gradient descent as follows. Let $\mu_0\in \Pp_2(\Omega)$ and for $k\in \NN$ define recursively
\begin{align}\label{eq:GD}
\mu_{k+1} = (T_k)_\# \mu_k
\end{align}
where $T_k(r,\theta) = \Ret_{(r,\theta)}(-2\alpha rJ'_{\nu_k}(\theta),-\beta \nabla J'_{\nu_k}(\theta))$ and $\nu_k =\h \mu_k$. The notation $\#$ stands for the pushforward operator\footnote{The pushfoward measure $T_\# \mu$ is characterized by $\int \psi\d(T_\#\mu) = \int (\psi\circ T) \d\mu$ for any continuous function $\psi$.}. When $\mu_0$ is a finite discrete probability measure with uniform weights, this gives Algorithm~\ref{alg:gradientbased}, which is a gradient descent for $F_m$ in the cone metric.
\begin{algorithm}\caption{Conic Particle Gradient Descent}\label{alg:gradientbased}
\begin{enumerate}
\item let $\alpha$ and $\beta$ be positive step-sizes and $\Ret$ a retraction on $\Omega$ compatible with the cone structure (Definition~\ref{def:compatibleretraction}).
\item define an initial distribution of $m$ particles weights-locations $(r_i^{(0)}, \theta_i^{(0)})_{i=1}^m$.
\item define for $k=0,1,\dots$ until a stopping criterion is satisfied 
\begin{align*}
(r_i^{(k+1)},\theta_i^{(k+1)}) \leftarrow \mathrm{Ret}_{(r_i^{(k)},\theta_i^{(k)})}(- 2\alpha  r_i^{(k)} J'_{\nu^{(k)}}(\theta^{(k)}_i), - \beta \nabla J'_{\nu^{(k)}}(\theta^{(k)}_i) ) \text{ for $i\in \{1,\dots,m\}$}
\end{align*}
where $J'_\nu$ is given by Eq.~\eqref{eq:firstvariation} and $\nu^{(k)} \coloneqq \frac1m \sum_{j=1}^m (r_j^{(k)})^2\delta_{\theta_j^{(k)}}$.
\end{enumerate}
\end{algorithm}

\paragraph{Transport-growth interpretation.} Just like the continuous-time gradient flow, the discrete time gradient descent has a corresponding projected dynamics in $\Mm_+(\Theta)$. Here the equivalence also relies on the properties of compatible retractions.
\begin{proposition}\label{prop:projectedGD}
Under {\sf (A1-2)}, let $\Ret$ be a retraction compatible with the cone structure and let $\mu_{k+1} = (T_k)_\#\mu_k$  for some $\mu_k \in \Pp_2(\Omega)$. Let $(T_k^r(\theta),T_k^\theta(\theta))\coloneqq T_k(1,\theta)$. Then, the projected iterates $(\nu_{k+1},\nu_k) \coloneqq  (\h\mu_{k+1}, \h\mu_k)$ satisfy
\begin{equation}\label{eq:projectedGD}
\nu_{k+1} = (T_k^\theta)_\# ((T_k^r)^2\nu_k).
\end{equation}
\end{proposition}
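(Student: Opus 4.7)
The plan is to reduce the identity to the weak form: for every continuous test function $\varphi:\Theta\to\RR$, show that $\int \varphi\, \d\nu_{k+1} = \int \varphi \circ T_k^\theta \cdot (T_k^r)^2\, \d\nu_k$. The whole argument rests on identifying the action of $T_k$ on a generic $(r,\theta)$ in terms of its action on $(1,\theta)$; once this is in hand the rest is bookkeeping with pushforwards.

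First I would pin down the homogeneous structure of $T_k$. Observe that $-2\alpha r J'_{\nu_k}(\theta) = r\cdot\bigl(-2\alpha J'_{\nu_k}(\theta)\bigr)$, so the vertical component of the displacement fed into the retraction already has the form $r\,\delta r$ with $\delta r := -2\alpha J'_{\nu_k}(\theta)$ independent of $r$; the horizontal component $\delta\theta := -\beta\nabla J'_{\nu_k}(\theta)$ also does not depend on $r$. Applying the homogeneity property (iii) of Definition~\ref{def:compatibleretraction} with $\tilde r = 1$ yields, for every $r>0$,
\[
T_k(r,\theta) = \Ret_{(r,\theta)}(r\,\delta r,\delta\theta) = \bigl(r\cdot T_k^r(\theta),\, T_k^\theta(\theta)\bigr),
\]
where $(T_k^r(\theta),T_k^\theta(\theta)) = T_k(1,\theta)$. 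For $r=0$, the zero-preserving property (ii) gives $T_k(0,\theta) = (0,\cdot)$, and since such particles enter $\h$ with weight $r^2=0$ they will be invisible to the computation below.

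Next I would carry out the weak-form computation. For any continuous $\varphi:\Theta\to\RR$, using the definition of $\h$, the pushforward $\mu_{k+1}=(T_k)_\#\mu_k$, and the factorization above,
\begin{align*}
\int_\Theta \varphi\, \d\nu_{k+1}
&= \int_\Omega r^2\,\varphi(\theta)\, \d\mu_{k+1}(r,\theta)
 = \int_\Omega (\text{pr}_r T_k(r,\theta))^2 \,\varphi(\text{pr}_\theta T_k(r,\theta))\, \d\mu_k(r,\theta) \\
&= \int_\Omega r^2\, (T_k^r(\theta))^2\, \varphi(T_k^\theta(\theta))\, \d\mu_k(r,\theta)
 = \int_\Theta (T_k^r(\theta))^2\, \varphi(T_k^\theta(\theta))\, \d\nu_k(\theta) \\
&= \int_\Theta \varphi\, \d\bigl((T_k^\theta)_\#((T_k^r)^2\nu_k)\bigr),
\end{align*}
where the $r=0$ set drops out of the first line by the $r^2$ weight and the zero-preserving property. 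Since this holds for every continuous $\varphi$ on the compact set $\Theta$, the identity $\nu_{k+1} = (T_k^\theta)_\#((T_k^r)^2\nu_k)$ follows.

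The only genuine subtlety is ensuring $T_k$ is well-defined on the support of $\mu_k$, i.e.\ that the displacement $(-2\alpha r J'_{\nu_k}(\theta),-\beta\nabla J'_{\nu_k}(\theta))$ lies in the domain prescribed by item (i) of Definition~\ref{def:compatibleretraction}; because the ratio in the vertical component is $|{-2\alpha J'_{\nu_k}(\theta)}|$ independently of $r$, this is a simple step-size condition and does not interfere with the structural identity. No other obstacles arise; the proposition is essentially a direct consequence of homogeneity of the retraction.
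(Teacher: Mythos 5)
Your proof is correct and follows essentially the same route as the paper's: use the homogeneity property (iii) to factor $T_k(r,\theta)=(r\,T_k^r(\theta),T_k^\theta(\theta))$, then verify the identity weakly against continuous test functions, with properties (i) and (ii) covering well-definedness and the $r=0$ fiber. Your exposition is somewhat more explicit about the factorization step and the apex of the cone, but the underlying argument is the same.
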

\begin{proof}
First, remark that by Property~(i) of Definition~\ref{def:compatibleretraction}, $T_k$ is well-defined if $\max\{\alpha,\beta\}$ is small enough and that $T_k\in L^2(\mu_k;\Omega)$ so $\mu_{k+1}\in \Pp_2(\Omega)$. For any continuous function $\psi:\Theta\to \RR$, using Properties (ii)-(iii) of Definition~\ref{def:compatibleretraction}, we get
\begin{align*}
\int \psi \d\nu_{k+1} &=\int r^2 \psi(\theta) \d((T_k)_\# \mu_k)(r,\theta) = \int (r T^r_k(\theta))^2\psi(T^\theta_k(\theta))\d\mu_k(r,\theta) = \int (T_k^r)^2 ( \psi\circ T^\theta_k) \d\nu_k 
\end{align*}
which proves the claim.
\end{proof}

\paragraph{Descent property of conic particle gradient descent.}
The following lemma shows that, for sufficiently small step-sizes, the iterates~\eqref{eq:GD} are well-defined and monotonously decrease the objective. As usual in optimization, this property is useful to convert results on gradient flows into results on gradient descent.
\begin{lemma}[Descent property]\label{lem:sufficientdescrease}
Assume {\sf(A1-2)} and let $\Ret$ be a retraction compatible with the cone structure  (Definition~\ref{def:compatibleretraction}). For any $J_{\max}\geq J^\star$, there exists $\eta_{\max}>0$ such that if $\nu_0 \in \Mm_+(\Theta)$ satisfies $J(\nu_0)\leq J_{\max}$ then the gradient descent iteration with $\max\{\alpha, \beta\} \leq \eta_{\max}$ is well defined for all $k\geq 0$ and satisfies
\begin{align*}
J(\nu_{k+1})- J(\nu_k) \leq -\frac{1}2 \Vert g_{\nu_k}\Vert^2_{L^2(\nu_k)} && \text{where} && \Vert g_{\nu}\Vert^2_{L^2(\nu)} \coloneqq \int \left(4\alpha \vert J'_{\nu}(\theta)\vert^2 + \beta\Vert \nabla J'_{\nu}(\theta) \Vert^2_{\theta}\right)\d\nu(\theta).
\end{align*}
\end{lemma}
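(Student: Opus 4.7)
I would work directly at the level of the projected iteration $\nu_{k+1} = (T_k^\theta)_\#((T_k^r)^2 \nu_k)$ supplied by Proposition~\ref{prop:projectedGD}. Using a first-order Taylor expansion of $J$ combined with the smoothness of $R$, I would extract a first-order contribution equal to $-\Vert g_{\nu_k}\Vert^2_{L^2(\nu_k)}$ and absorb all remainders into $\tfrac12\Vert g_{\nu_k}\Vert^2_{L^2(\nu_k)}$ by taking $\eta_{\max}$ small. An induction then keeps the iterates in the sublevel set $\{J\leq J_{\max}\}$, making all constants uniform.

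\textbf{Uniform bounds on the sublevel set.} Under {\sf (A1)}, $\nu(\Theta)\leq J_{\max}/\lambda$ whenever $J(\nu)\leq J_{\max}$, so $\int \phi\,\d\nu$ lies in a bounded subset of $\Ff$. Boundedness of $\nabla R$ on sublevel sets together with the continuous differentiability of $\phi$ on the compact $\Theta$ then yield constants $M_0,M_1$ with $\Vert J'_\nu\Vert_\infty\leq M_0$, $\Vert \nabla J'_\nu\Vert_\infty\leq M_1$, together with an $L$-Lipschitz modulus for $\nabla R$ on this set. Choosing $\eta_{\max}$ with $2\eta_{\max}M_0<C$ and $\eta_{\max}M_1<C$ (where $C$ is the constant from Definition~\ref{def:compatibleretraction}(i)) ensures that $T_k(1,\theta)$ is defined everywhere. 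From $\d\Ret_x(0)=\id$ and the $C^2$-regularity of the retraction, I would obtain uniform expansions
\begin{align*}
T_k^r(\theta) &= 1 - 2\alpha J'_{\nu_k}(\theta) + E^r_k(\theta), \\
T_k^\theta(\theta) &= \Exp_\theta(-\beta \nabla J'_{\nu_k}(\theta)) + E^\theta_k(\theta),
\end{align*}
with $\vert E^r_k(\theta)\vert + \Vert E^\theta_k(\theta)\Vert_\theta \leq K(\alpha^2 \vert J'_{\nu_k}(\theta)\vert^2 + \beta^2\Vert \nabla J'_{\nu_k}(\theta)\Vert_\theta^2)$ for some $K$ depending only on $J_{\max}$.

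\textbf{Descent inequality.} Since $\nu\mapsto \lambda\nu(\Theta)$ is linear, the $L$-smoothness of $R$ gives
\[
J(\nu_{k+1})-J(\nu_k) \leq \int_\Theta J'_{\nu_k}\,\d(\nu_{k+1}-\nu_k) + \tfrac{L}{2}\Big\Vert \int_\Theta \phi\,\d(\nu_{k+1}-\nu_k)\Big\Vert_\Ff^2.
\]
Using the expansions above and Taylor's theorem I would show
\[
(T_k^r(\theta))^2 J'_{\nu_k}(T_k^\theta(\theta)) = J'_{\nu_k}(\theta) - 4\alpha\,(J'_{\nu_k}(\theta))^2 - \beta\,\Vert \nabla J'_{\nu_k}(\theta)\Vert_\theta^2 + \rho_k(\theta),
\]
with $\vert\rho_k(\theta)\vert \leq K'(\alpha^2\vert J'_{\nu_k}\vert^2 + \beta^2 \Vert \nabla J'_{\nu_k}\Vert_\theta^2)$ uniformly. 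Integrating against $\nu_k$, the linear part is exactly $-\Vert g_{\nu_k}\Vert^2_{L^2(\nu_k)}$, while the remainder $R_1$ is bounded via the elementary inequalities $\alpha^2\vert J'\vert^2\leq \tfrac{\max\{\alpha,\beta\}}{4}\cdot 4\alpha\vert J'\vert^2$ and $\beta^2\Vert \nabla J'\Vert^2\leq \max\{\alpha,\beta\}\cdot \beta \Vert \nabla J'\Vert^2$ by $\vert R_1\vert\leq K'\max\{\alpha,\beta\}\,\Vert g_{\nu_k}\Vert^2_{L^2(\nu_k)}$. The Hessian-like term is controlled analogously: boundedness of $\phi$ and $\nabla\phi$ and Cauchy--Schwarz give $\Vert\int \phi\,\d(\nu_{k+1}-\nu_k)\Vert_\Ff^2 \leq K''\max\{\alpha,\beta\}\Vert g_{\nu_k}\Vert^2_{L^2(\nu_k)}$. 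Taking $\eta_{\max}$ so that $(K'+\tfrac{L}{2}K'')\eta_{\max}\leq \tfrac12$ yields the claimed inequality, and since the right-hand side is nonpositive we conclude $J(\nu_{k+1})\leq J(\nu_k)\leq J_{\max}$, closing the induction.

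\textbf{Main obstacle.} The delicate point is the \emph{scaling} of the remainders: individually the Taylor error terms are $O(\max\{\alpha,\beta\}^2)\times(\text{bounded})$, but absorbing them into $\Vert g_{\nu_k}\Vert^2_{L^2(\nu_k)}$ requires an extra factor $\max\{\alpha,\beta\}$ rather than mere smallness. This is achieved precisely because the weights $4\alpha$ in front of $(J')^2$ and $\beta$ in front of $\Vert\nabla J'\Vert^2$ defining $\Vert g_{\nu_k}\Vert^2_{L^2(\nu_k)}$ match the structure of the remainder. A second technical nuisance is handling the zero-preserving and homogeneity clauses of Definition~\ref{def:compatibleretraction} so that the projected update makes sense without tracking mass at $r=0$; here the identity $(T_k^r(\theta),T_k^\theta(\theta))=T_k(1,\theta)$ plus property~(iii) lets me perform everything at the level of $\nu_k$.
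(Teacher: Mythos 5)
Your proposal follows essentially the same route as the paper's proof: use the projected update $\nu_{k+1}=(T_k^\theta)_\#((T_k^r)^2\nu_k)$, Taylor-expand the retraction and $R$ to first order to extract $-\Vert g_{\nu_k}\Vert^2_{L^2(\nu_k)}$ as the leading term, bound all remainders by $O(\max\{\alpha,\beta\})\Vert g_{\nu_k}\Vert^2_{L^2(\nu_k)}$ because the weights $(4\alpha,\beta)$ match the structure of the error terms, and close by induction to keep iterates in the sublevel set. The only cosmetic difference is that you expand $(T_k^r)^2 J'_{\nu_k}\!\circ T_k^\theta$ directly rather than the paper's intermediate step of computing $\int\psi\,\d(\nu_{k+1}-\nu_k)$ for a general $\psi\in\mathcal{C}^2$ and specializing; both are the same computation.
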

\begin{proof}
Let us first look at one step starting from $\nu_k\in \Mm_+(\Theta)$. By Property~(i) of Definition~\ref{def:compatibleretraction},  there exists $\eta_{\max}>0$ such that this iteration is well-defined as long as $\max\{\alpha,\beta\} \leq \eta_{\max}$. We first consider $\nu_k(\Theta)$ and $\Vert J'_{\nu_k}\Vert_{\mathcal{C}^2}$ as constants, where $\Vert \phi\Vert_{\mathcal{C}^2} = \max\{ \Vert \phi\Vert_{\infty}, \Vert \nabla \phi\Vert_{\infty}, \Vert \nabla^2 \phi\Vert_{\infty}\}$ (we will see later that they can be upper bounded independently of the iteration $k$). With the notations of Proposition~\ref{prop:projectedGD},  we  have $\nu_{k+1} = (T^\theta_k)_\#((T^r_k)^2\nu_k)$ where $T_k^r(\theta)=1-2\alpha J'_{\nu_k}(\theta) +O(\alpha^2J'_{\nu_k}(\theta)^2)$ and, in normal coordinates, $T_k^\theta(\theta) = \theta - \beta \nabla J'_{\nu_k}(\theta) +O(\beta^2 \Vert \nabla J'_{\nu_k}(\theta)\Vert^2)$ where the hidden constants are uniform in $\theta$. It follows that for any twice continuously differentiable $\psi \in \mathcal{C}^2(\Theta;\RR)$, it holds
\begin{align*}
\int \psi\d(\nu_{k+1}-\nu_k) 
&= \int ((T^r_k(\theta))^2-1)\psi(T^\theta_k(\theta))\d\nu_k(\theta) + \int   (\psi( T^\theta_k(\theta))-\psi(\theta))\d\nu_k(\theta)\\
 & = -\int \left(4\alpha \psi \cdot J'_{\nu_k} + \beta \nabla \psi \cdot \nabla J'_{\nu_k} \right)\d\nu_k+\Vert \psi\Vert_{\mathcal{C}^2}  \Vert g_{\nu_k}\Vert^2_{L^2(\mu_k)} O\left( \max\{\alpha,\beta\} \right).
\end{align*}
In particular, using this expression with $\psi_f(\theta) = \langle \phi(\theta),f\rangle$  where $\Vert f \Vert\leq 1$ (which have uniformly bounded norms $\Vert \psi_f\Vert_{\mathcal{C}^2}$ under our assumptions), we get that 
$$
\Big\Vert \int \phi \d(\nu_{k+1}-\nu_k)\Big\Vert^2  = \sup_{\Vert f\Vert\leq 1} \Big\Vert \int \psi_f \d(\nu_{k+1}-\nu_k)\Big\Vert^2 =  O(\max\{\alpha,\beta\} \Vert g_{\nu_k}\Vert^2_{L^2(\mu_k)}).
$$
By a first order expansion of $R$, we have for $f,f'\in \Ff$, $R(f')-R(f) = \langle f'-f, \nabla R(f)\rangle + O(\Vert f'-f\Vert^2)$. Thus, using the expression of $J'_{\nu}$ from Eq.~\eqref{eq:firstvariation}, it follows
\begin{align*}
J(\nu_{k+1})-J(\nu_k) & =  \Big\langle \int \phi \d(\nu_{k+1} - \d\nu_k), \nabla R\Big(\int \phi \d\nu_k\Big)\Big\rangle\\
& \qquad + \lambda \int (\d\nu_{k+1}-\d\nu_k)+  O(\max\{\alpha,\beta\} \Vert g_{\nu_k}\Vert^2_{L^2(\mu_k)})\\
&=  \int J'_{\nu_k}\d (\nu_{k+1}-\nu_k) + O(\max\{\alpha,\beta\} \Vert g_{\nu_k}\Vert^2_{L^2(\mu_k)}) \\
& = (-1 + O(\max\{\alpha,\beta\})) \Vert g_{\nu_k}\Vert^2_{L^2(\mu_k)}.
\end{align*}
So there exists $\eta_{\max}$ such that if $\max\{\alpha,\beta\}\leq  \eta_{\max}$, we have $J(\nu_{k+1})-J(\nu_k)\leq -\frac12 \Vert g_{\nu_k}\Vert^2_{L^2(\mu_k)}$. Finally, since we have assumed that $\lambda>0$ and $\nabla R$ is bounded on sublevel sets, the quantities $\sup_{J(\nu)\leq J(\nu_k)} \nu(\Theta)$ and $\sup_{J(\nu)\leq J(\nu_k)} \Vert J'_\nu\Vert_{\mathcal{C}^2}$ are finite. By the decrease property we just proved, these quantities decrease after one iteration if $\max\{\alpha,\beta\} \leq \eta_{\max}$. So $\eta_{\max}$, which depends on these quantities, can be chosen independently of $k\geq 0$. 
\end{proof}

\section{Exponential local convergence}\label{sec:local}

We now proceed to the theoretical analysis of the projected gradient flow~\eqref{eq:projectedGF} and projected gradient descent~\eqref{eq:projectedGD} in the conic setting. In light of Propositions~\ref{prop:projectedGF} and~\ref{prop:projectedGD}, these dynamics correspond to the gradient flow and gradient descent of $F$, seen through the projection operator~$\h$.

\subsection{Non-degeneracy assumptions}\label{sec:theoryprelim}
In order to derive global optimality conditions, we assume the following.
\begin{itemize}
\item [\sf(A3)] The loss $R$ is convex.
\end{itemize}
Commonly used losses that satisfy the smoothness and convexity conditions are the square loss and the logistic loss. Under this assumption, we have existence of minimizers and a global optimality condition.
\begin{proposition}[Optimality condition]\label{prop:optimalitycondition} Under~{\sf(A1)} and~{\sf(A3)}, problem~\eqref{eq:objective} admits minimizers. Moreover, a measure $\nu^\star \in \Mm_+(\Theta)$ is a minimizer if and only if it holds $J'_{\nu^\star}(\theta)\geq 0$ for all $\theta\in \Theta$ and $J'_{\nu^\star}(\theta)=0$ whenever $\theta$ in the support of $\nu^\star$.
\end{proposition}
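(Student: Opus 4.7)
The statement contains two claims, existence and the first-order characterization of minimizers, both of which I expect to follow from standard convex-analytic arguments on the cone $\Mm_+(\Theta)$.

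\emph{Existence.} I would apply the direct method. Since $R \geq 0$, every sublevel set of $J$ satisfies $\nu(\Theta) \leq J(\nu)/\lambda$ and is thus bounded in total variation. As $\Theta$ is compact, the Banach--Alaoglu theorem yields weak-$*$ sequential compactness of bounded subsets of $\Mm(\Theta)$, and $\Mm_+(\Theta)$ is weak-$*$ closed. Under (A1), $\phi:\Theta \to \Ff$ is continuous, so $\nu \mapsto \int \phi\,d\nu$ is weak-$*$ continuous into $\Ff$; composing with the continuous function $R$ gives weak-$*$ continuity of the loss term, and $\nu \mapsto \nu(\Theta)$ is weak-$*$ lower semicontinuous. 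Taking a minimizing sequence, extracting a weak-$*$ limit, and passing to the limit yields a minimizer.

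\emph{Necessity of the conditions.} Convexity of $J$ is immediate from (A3), since $R$ is convex and $\nu \mapsto \int \phi\,d\nu$ is linear. Given a minimizer $\nu^\star$, for any $\nu \in \Mm_+(\Theta)$ and $\epsilon \in [0,1]$ the measure $(1-\epsilon)\nu^\star + \epsilon \nu$ is feasible, so using the first-variation formula~\eqref{eq:firstvariation},
\[
0 \leq \frac{d}{d\epsilon}\Big|_{\epsilon=0^+} J\bigl((1-\epsilon)\nu^\star + \epsilon \nu\bigr) = \int J'_{\nu^\star}\,d(\nu - \nu^\star).
\]
Plugging $\nu = t\delta_\theta$ and letting $t \to +\infty$ yields $J'_{\nu^\star}(\theta) \geq 0$ for every $\theta\in\Theta$. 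Plugging $\nu = 0$ gives $\int J'_{\nu^\star}\,d\nu^\star \leq 0$, and combined with pointwise nonnegativity this forces $\int J'_{\nu^\star}\,d\nu^\star = 0$, so that $J'_{\nu^\star}$ must vanish on $\spt \nu^\star$.

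\emph{Sufficiency.} Conversely, if both conditions hold, convexity of $J$ together with the first-variation formula gives, for every $\nu \in \Mm_+(\Theta)$,
\[
J(\nu) - J(\nu^\star) \geq \int J'_{\nu^\star}\,d(\nu - \nu^\star) = \int J'_{\nu^\star}\,d\nu \geq 0,
\]
using $\int J'_{\nu^\star}\,d\nu^\star = 0$ from the hypothesis, and pointwise nonnegativity of $J'_{\nu^\star}$ for the final inequality. Hence $\nu^\star$ is a global minimizer. I do not anticipate a serious obstacle here: the only point worth double-checking is that (A1) delivers enough regularity to legitimize both the first-variation identity~\eqref{eq:firstvariation} and the global convex inequality above, but these are immediate consequences of Fr\'echet differentiability and convexity of $R$ precomposed with a bounded linear map.
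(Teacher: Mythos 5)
Your proof is correct and follows essentially the same route as the paper: direct method for existence (bounded sublevel sets in total variation, weak-$*$ compactness, weak-$*$ continuity of the loss and of total mass), and then the convex variational inequality $\int J'_{\nu^\star}\,\d(\nu-\nu^\star)\geq 0$ for all $\nu\in\Mm_+(\Theta)$ for the optimality characterization. The only difference is that you explicitly unpack the equivalence between this integral condition and the pointwise conditions by testing against $t\delta_\theta$ and $\nu=0$, a step the paper asserts in a single line.
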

\begin{proof}
As $\lambda$ is assumed positive, the sublevel sets of $J$ on $\Mm_+(\Theta)$ are bounded in total variation, and are thus weakly pre-compact. It follows that any minimizing sequence for $J$ admits at least one weak limit point $\nu^\star$, which is a minimizer of~\eqref{eq:objective} since $J$ is weakly continuous. The stated optimality condition is equivalent to having $\int J'_{\nu^\star}\d (\nu -\nu^\star) \geq 0$ for all $\nu \in \Mm_+(\Theta)$. The latter is a sufficient optimality condition since by convexity of $J$, $J(\nu)-J(\nu^\star) \geq \int J'_{\nu^\star}\d (\nu -\nu^\star)$. It is also necessary since  it holds $\frac{d}{d\epsilon} J((1-\epsilon)\nu^\star +\epsilon \nu)\vert_{\epsilon = 0^+} = \int J'_{\nu^\star} \d(\nu-\nu^\star)$.
\end{proof}

\paragraph{Sparse minimizer.}
Our local analysis requires sparsity of the minimizers of the objective $J$,  which can be guaranteed a priori  in several settings (e.g.~\cite{duval2015exact,boyer2019representer}).

\begin{itemize}
\item [\sf(A4)] Problem~\eqref{eq:objective} admits a unique global minimizer on $\Mm_+(\Theta)$ which is of the form $\nu^\star = \sum_{i=1}^{m^\star} r_i^2\delta_{\theta_i}$ with $\nu^\star(\Theta)>0$. We denote $f^\star \coloneqq \int \phi \d\nu^\star = \sum_{i=1}^{m^\star}r_i^2\phi(\theta_i)$.
\end{itemize}
Without loss of generality, we assume $r_i>0$ for all $i$ and $\theta_i \neq \theta_{i'}$ whenever $i\neq i'$, so that $(r_i,\theta_i)_{i=1}^{m^\star}$ is uniquely well-defined, up to re-ordering. Let us fix from now on normal coordinates frames on the neighborhood of each $\theta_i$. This allows to identify tensors at $\theta_i$ with their expression in coordinates and also induces a set of coordinates on the direct sum of the tangent spaces $T_{\theta_i}\Theta$, which is of dimension $m^\star\times d$. 
\paragraph{Kernels and non-degeneracy.}
We define the \emph{global kernel} $K\in \RR^{(m^\star\times (1+d))^2}$ by
\[
K_{(i,j),(i',j')} \coloneqq \langle r_i \bar \nabla_j \phi(\theta_i), r_{i'} \bar \nabla_{j'}\phi(\theta_{i'})\rangle_{d^2\! R_{f^\star}}
\]
where $\bar\nabla \phi \coloneqq (2 \alpha \phi, \beta \nabla \phi)$ can be interpreted as the gradient of $\h\phi$ at $(1,\theta)$. Remark that $K$ is defined via the quadratic form associated to the Hessian of $R$ at $f^\star$. This interaction kernel $K$ appears naturally in the various statistical and optimization analysis of the minimization problem under consideration~\cite{duval2015exact,traonmilin2018basins}. We also use the notation for the \emph{local kernels}  for $i\in 1,\dots,m^\star$
\[
H_i\coloneqq \beta^2 \nabla^2 J'_{\nu^\star}(\theta_i)
\]
expressed in local coordinates.  In order to simplify notations, we concatenate these matrices in a large matrix $H\in \RR^{(m^\star\times (1+d))^2}$ of the same size as $K$ defined as
 \[
 H_{(i,j),(i,j')} =\begin{cases}
 \beta^2  \nabla^2_{j,j'} J'_{\nu^\star}(\theta_i) &\text{if $i=i'$ and $j,j'\geq 1$},\\
  0 &\text{ if $j=0$ or $j'=0$}.
  \end{cases}
 \]
where here and in the proofs, we use $0$ to label the $r$'s coordinate. The local analysis will be carried under the following non-degeneracy assumptions.
\begin{itemize}
\item [\sf(A5)] The minimizer $\nu^\star$ is non-degenerate in the sense that $\nabla^2 R(f^\star)$ is positive definite and, calling $\sigma_{\min}(A)$ the smallest singular value of a linear operator $A$, we have global curvature  $\sigma_{\min}(K)>0$, local curvature  $\sigma_{\min}(H) = \min_{i} \sigma_{\min}(H_i)>0$, and strict slackness, i.e.\ the only points where $J'_{\nu^\star}$ vanishes are $(\theta_i)_{i=1}^{m^\star}$.
\end{itemize}

The first property is always satisfied if $R$ is strictly convex. The second property is satisfied when the kernel associated to the feature function $\bar \nabla \phi$ is positive definite. The last two assumptions unfortunately depend on an \emph{a priori} unknown object $J'_{\nu^\star}$, but are often required to perform analysis of Problem~\eqref{eq:objective}~\cite{flinth2019linear,duval2015exact}. Yet, in some cases, they can be guaranteed to hold, see e.g.~\cite{poon2018geometry,tang2013compressed}. In spite of this drawback, the local analysis leads to interesting qualitative insights on the dynamics in practice, see Section~\ref{sec:numerics}.

\subsection{Convergence in $\Mm_+(\Theta)$}
A first consequence of these assumptions is that convergence in value implies convergence to minimizers. The distance on $\Mm_+(\Theta)$ that naturally appears in the analysis is the Wasserstein-Fisher-Rao, a.k.a.\ Hellinger-Kantorovich metric $\widehat W_2$, which is the extension of the Wasserstein $W_2$ metric to unnormalized measures. It admits many equivalent definitions~\cite{liero2018optimal, kondratyev2016new, chizat2018unbalanced}, the most suitable to our context being~\cite[
Thm. 7.20]{liero2018optimal}
\[
\widehat W_2(\nu_1,\nu_2) \coloneqq \min \left \{W_2(\mu_1,\mu_2)\;;\; (\mu_1,\mu_2)\in \Pp_2(\Omega)^2 \text{ satisfy } (\h\mu_1,\h\mu_2)=(\nu_1,\nu_2)\right\}
\]
where the Wasserstein distance on $\Omega$ is defined relative to the cone metric (in this paragraph, with $\alpha=\beta=1$). The proof of the following result involves the construction of a transport map in the lifted space $\Pp_2(\Omega)$ and is postponed to Appendix~\ref{app:distanceinequality}.
\begin{proposition}\label{prop:distanceinequality}
Under {\sf (A1-5)}, for all $J_{\max}\geq J^\star$, there exists $C,C'>0$, such that if $\nu\in \Mm_+(\Theta)$ satisfies $J(\nu)\leq J_{\max}$ then $\Vert \nu - \nu^\star\Vert^*_{\BL} \leq C \widehat W_2(\nu, \nu^\star) \leq C'(J(\nu)-J^\star)^{\frac12}$.
\end{proposition}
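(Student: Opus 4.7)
The statement contains two inequalities, which I would handle separately.

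For the first, $\|\nu-\nu^\star\|^*_{\BL}\leq C\,\widehat W_2(\nu,\nu^\star)$, my plan is to lift to $\Pp_2(\Omega)$. Pick lifts $\mu,\mu^\star\in\Pp_2(\Omega)$ of $\nu,\nu^\star$ with $W_2(\mu,\mu^\star)^2$ arbitrarily close to $\widehat W_2(\nu,\nu^\star)^2$ and an optimal coupling $\gamma\in\Pi(\mu,\mu^\star)$. For any test function $\varphi$ with $\|\varphi\|_{\BL}\leq 1$, I would write
\[
\int\varphi\,d(\nu-\nu^\star)=\int\bigl(r^2\varphi(\theta)-\tilde r^2\varphi(\tilde\theta)\bigr)\,d\gamma\bigl((r,\theta),(\tilde r,\tilde\theta)\bigr),
\]
split the integrand via the identity $r^2\varphi(\theta)-\tilde r^2\varphi(\tilde\theta)=r(r-\tilde r)\varphi(\theta)+r\tilde r(\varphi(\theta)-\varphi(\tilde\theta))+\tilde r(r-\tilde r)\varphi(\tilde\theta)$, and apply Cauchy-Schwarz. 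The cone metric satisfies $|r-\tilde r|^2\leq\dist_\Omega^2$ and $r\tilde r\,\dist(\theta,\tilde\theta)^2\lesssim\dist_\Omega^2$ (from $1-\cos x\gtrsim x^2$ on $[0,\pi]$), and the uniform mass bound $\nu(\Theta)\leq J_{\max}/\lambda$ controls the remaining second-moment factors, giving the claim.

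For the second inequality, $\widehat W_2(\nu,\nu^\star)^2\leq C'(J(\nu)-J^\star)$, I would split into two regimes. Beyond a fixed gap threshold $\eps_0$, the bound is trivial since $\widehat W_2$ is uniformly bounded on $\{J\leq J_{\max}\}$. For $J(\nu)-J^\star<\eps_0$, a weak-compactness argument using (A4) and continuity of $R,\phi$ forces $\nu$ to be $\widehat W_2$-close to $\nu^\star$, reducing matters to a local estimate. Fix small disjoint normal neighborhoods $B_i$ around each atom $\theta_i$ of $\nu^\star$, decompose $\nu=\sum_i\nu|_{B_i}+\nu_{\mathrm{rem}}$, and introduce the discrepancies $a_i=\nu(B_i)-r_i^2$, $\bar b_i=\nu(B_i)^{-1}\int_{B_i}(\theta-\theta_i)\,d\nu$ (in normal coordinates), and $m_{\mathrm{rem}}=\nu_{\mathrm{rem}}(\Theta)$. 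An explicit lift-transport plan --- Fisher-Rao growth for the mass gap $a_i$, Wasserstein transport for the centroid shift, Hellinger destruction for $\nu_{\mathrm{rem}}$ --- yields the upper bound
\[
\widehat W_2(\nu,\nu^\star)^2\lesssim\sum_i a_i^2+\sum_i\int_{B_i}\dist(\theta,\theta_i)^2\,d\nu+m_{\mathrm{rem}}.
\]

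The heart of the proof is a matching quadratic lower bound on $J(\nu)-J^\star$. By convexity of $R$ and its second-order Taylor expansion at $f^\star$,
\[
J(\nu)-J^\star\geq\int J'_{\nu^\star}\,d\nu+\tfrac12\|f-f^\star\|^2_{d^2R_{f^\star}}+o(\|f-f^\star\|^2),\qquad f\coloneqq\int\phi\,d\nu.
\]
The first term is lower-bounded using (A5): local curvature $\sigma_{\min}(H_i)>0$ yields $J'_{\nu^\star}(\theta)\gtrsim\dist(\theta,\theta_i)^2$ on each $B_i$ (recovering the positional contribution), while strict slackness combined with compactness gives $J'_{\nu^\star}\geq c_0>0$ on $\Theta\setminus\bigcup_iB_i$ (recovering $m_{\mathrm{rem}}$). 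For the second term, Taylor expansion of $\phi$ near each $\theta_i$ gives $f-f^\star\approx\sum_i\phi(\theta_i)a_i+\sum_i\nabla\phi(\theta_i)\nu(B_i)\bar b_i$ up to higher-order corrections and a contribution from $\nu_{\mathrm{rem}}$; after rescaling by $(2\alpha r_i,\beta r_i)$, this identifies with a vector acted on by the global kernel $K$, and $\sigma_{\min}(K)>0$ extracts $\sum_i a_i^2$. Combined, these quantities dominate the upper bound on $\widehat W_2^2$ up to a multiplicative constant.

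The main obstacle is the careful interplay between the three pieces of (A5). The global curvature $\sigma_{\min}(K)$ only controls $\nu(B_i)^2\|\bar b_i\|^2$, whereas the Wasserstein cost scales as $\nu(B_i)\|\bar b_i\|^2$; the missing factor $\nu(B_i)$ (possibly small) is recovered from the local curvature term rather than from $K$. One must also absorb the Taylor remainder of $\phi$ into $\int_{B_i}\dist(\theta,\theta_i)^2\,d\nu$ (using that this variance dominates $\nu(B_i)\|\bar b_i\|^2$ for $\nu$ sufficiently concentrated) and bound the $\nu_{\mathrm{rem}}$ contribution to $f-f^\star$ so that it does not swamp the strict-slackness term $c_0\,m_{\mathrm{rem}}$.
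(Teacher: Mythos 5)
Your proposal follows essentially the same route as the paper. The paper also lifts to the cone, constructs an explicit transport map $T(r,\theta)=(r\,r_i/\bar r_i,\theta_i)$ on each neighborhood $\Theta_i$ and $(0,\theta)$ outside (Lemma~\ref{lem:distanceinequality}), and then bounds the resulting cost — expressed via the local masses $\bar r_i^2$, centroid shifts $\delta\theta_i$, covariances $\Sigma_i$, and the leftover mass $\bar r_0^2$ — by $J(\nu)-J^\star$ via a second-order expansion of $R$ combined with the three pieces of~{\sf(A5)} (Lemma~\ref{lem:distancebound}): local curvature of $J'_{\nu^\star}$ for the within-cluster variance and centroid terms, strict slackness for the far mass, and strong convexity of $R$ together with $\sigma_{\min}(K)>0$ for the mass discrepancies. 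In particular you correctly flag the scaling subtlety: $b^\intercal K b$ controls $\nu(B_i)^2\|\bar b_i\|^2$ whereas the transport cost scales as $\nu(B_i)\|\bar b_i\|^2$, and the paper resolves this exactly as you suggest — the centroid/variance contributions are recovered from the local Hessian lower bound $J'_{\nu^\star}(\theta)\gtrsim\dist(\theta,\theta_i)^2$, not from $K$; the global kernel is needed only for the mass gaps. The only cosmetic difference is the first inequality: you rederive $\|\nu-\nu^\star\|^*_{\BL}\leq C\,\widehat W_2(\nu,\nu^\star)$ from the Cauchy--Schwarz splitting of $r^2\varphi(\theta)-\tilde r^2\varphi(\tilde\theta)$, whereas the paper invokes a comparison inequality from the Hellinger--Kantorovich literature; both give the same constant up to the mass factor $\nu(\Theta)+\nu^\star(\Theta)$, which is controlled by $J_{\max}/\lambda$.
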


\subsection{Sharpness of the objective}
 Our first main result is a lower bound on the squared norm of the gradient in terms of the sub-optimality gap, an inequality known as sharpness, or Polyak-\L{}ojasiewicz inequality~\cite{polyak1963gradient,karimi2016linear}, which is a special case of \L{}ojasiewicz gradient inequality. It involves the $L^2(\nu)$ norm of the gradient, which we denote for $\nu=\h\mu$ by
 \[
 \Vert g_{\nu}\Vert^2_{L^2(\nu)} \coloneqq \int_{\Omega} \left( \frac1\alpha \vert 2\alpha rJ'_{\nu}(\theta)\vert^2 + \frac {r^2}{\beta} \Vert \beta \nabla J'_{\nu}\Vert_\theta^2\right)\d\mu(r,\theta) = \int_\Theta \left( 4 \alpha \vert J'_\nu(\theta)\vert^2 +\beta \Vert \nabla J'_\nu(\theta)\Vert^2_\theta \right)\d\nu(\theta).
 \]

\begin{theorem}[Sharpness]\label{th:gradientinequality} Under {\sf(A1-5)}, there exists $J_0>J^\star$ and $\kappa_0>0$, such that for all $\nu\in \Mm_+(\Theta)$ satisfying $J(\nu) \leq  J_0$ and $\alpha,\beta>0$, one has
\[
\frac12 \Vert g_\nu\Vert^2_{L^2(\nu)}\geq \kappa_0\, \min\{\alpha,\beta\}\, (J(\nu)-J^\star).
\]
\end{theorem}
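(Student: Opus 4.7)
The approach is perturbative: localize around the atoms of $\nu^\star$, derive matching quadratic upper and lower bounds in a shared set of deformation variables, and close the argument via the non-degeneracy assumption~{\sf(A5)}.

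\textbf{Step 1 (Localization).} Proposition~\ref{prop:distanceinequality} ensures that choosing $J_0$ close enough to $J^\star$ makes $\widehat W_2(\nu,\nu^\star)$ small, hence $f-f^\star$ small in $\Ff$ and $J'_\nu$ $C^2$-close to $J'_{\nu^\star}$ uniformly on $\Theta$. Combined with strict slackness and $H_i\succ 0$, this yields disjoint balls $B_i\ni\theta_i^\star$ and $c_0>0$ such that $J'_\nu\ge c_0$ on $\Theta^{\mathrm{far}}:=\Theta\setminus\bigsqcup_i B_i$ and $\nabla^2 J'_\nu\succeq \tfrac{1}{2\beta^2}H_i$ on each $B_i$. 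In particular $J'_\nu|_{B_i}$ has a unique critical point $\theta_i^\nu$ (close to $\theta_i^\star$), and the usual finite-dimensional Polyak--\L{}ojasiewicz inequality yields $\|\nabla J'_\nu(\theta)\|^2_\theta\ge \sigma_{\min}(\nabla^2 J'_{\nu^\star}(\theta_i^\star))\bigl(J'_\nu(\theta)-u_i\bigr)$ for $\theta\in B_i$, with $u_i:=J'_\nu(\theta_i^\nu)$.

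\textbf{Step 2 (Matching quadratic bounds).} With $w_i:=\nu(B_i)$, $\bar\theta_i$ the barycenter of $\nu|_{B_i}$, and $m^{\mathrm{far}}:=\nu(\Theta^{\mathrm{far}})$, convexity of $R$ gives $J(\nu)-J^\star\le \int J'_\nu\,d(\nu-\nu^\star)$, and Taylor expansion of $J'_\nu$ around each $\theta_i^\nu$ produces an upper bound of the form $J(\nu)-J^\star\lesssim m^{\mathrm{far}}+\sum_i (w_i-(r_i^\star)^2)u_i+\sum_i\int_{B_i}(J'_\nu-u_i)\,d\nu$, the contributions from $\theta_i^\nu-\theta_i^\star$ being absorbable into the leading terms. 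Conversely, $|J'_\nu|\ge c_0$ on $\Theta^{\mathrm{far}}$, the pointwise Taylor comparison $|J'_\nu|^2\gtrsim u_i^2$ on $B_i$, and the integrated PL estimate from Step~1 together give $\tfrac12\|g_\nu\|^2_{L^2(\nu)}\gtrsim \alpha c_0^2 m^{\mathrm{far}}+\alpha\sum_i w_i u_i^2+\beta\bar\mu\sum_i\int_{B_i}(J'_\nu-u_i)\,d\nu$ with $\bar\mu:=\min_i\sigma_{\min}(\nabla^2 J'_{\nu^\star}(\theta_i^\star))$. The far-mass and spread contributions are thus directly dominated with a constant $\gtrsim\min\{\alpha,\beta\}$; only the linear cross terms $\sum_i(w_i-(r_i^\star)^2)u_i$ remain.

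\textbf{Step 3 (Closing via $K$; main obstacle).} The cross terms are handled by the global kernel $K$. To leading order, $u_i\approx J'_\nu(\theta_i^\star)=\langle\phi(\theta_i^\star),d^2R(f^\star)(f-f^\star)\rangle$ and similarly for $\nabla J'_\nu(\theta_i^\star)$, while $f-f^\star=\sum_j(w_j-(r_j^\star)^2)\phi(\theta_j^\star)+\sum_j w_j\nabla\phi(\theta_j^\star)(\bar\theta_j-\theta_j^\star)+\int\phi\,d\nu^{\mathrm{far}}+\text{h.o.t.}$ In the scaled basis $(r_i^\star\bar\nabla_j\phi(\theta_i^\star))$, the composition of these two linear maps is precisely the matrix $K$, up to the far contribution. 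Hence $\sigma_{\min}(K)>0$ gives a bi-Lipschitz estimate: $\alpha^2\sum_i(w_i-(r_i^\star)^2)^2/(r_i^\star)^2+\beta^2\sum_i\|\bar\theta_i-\theta_i^\star\|^2\,w_i^2/(r_i^\star)^2\lesssim \alpha^2\sum_i (r_i^\star)^2 u_i^2+\beta^2\sum_i (r_i^\star)^2\|\nabla J'_\nu(\theta_i^\star)\|^2+(\text{far})$. A Young-type inequality then bounds $\sum_i(w_i-(r_i^\star)^2)u_i$ by $\tfrac\alpha 2\sum_i w_i u_i^2+\tfrac{1}{2\alpha}\sum_i(w_i-(r_i^\star)^2)^2/w_i$, the second piece being in turn controlled by the $K$-bound above and by the tangential part of $\|g_\nu\|^2$. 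Balancing the $\alpha,\beta$ factors yields the claimed $\kappa_0\min\{\alpha,\beta\}$. The main anticipated obstacle is exactly this matching: the upper bound couples atoms through the non-local Hessian $d^2R(f^\star)$ while the gradient lower bound is an integral against $\nu$ and therefore block-diagonal across the $B_i$, so $\sigma_{\min}(K)>0$ is precisely what prevents destructive cancellations between atoms; taking $J_0-J^\star$ small enough is what absorbs all higher-order Taylor remainders into the leading quadratics.
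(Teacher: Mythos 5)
Your route diverges from the paper's after the localization step. The paper does not linearize around the local minimizers of $J'_\nu$; it works entirely with the $\nu$-moments $(\bar r_i^2,\bar\theta_i,\Sigma_i)$ on fixed neighborhoods of the true atoms $\theta_i^\star$, packages them into $(b,s,\bar r_0)$, and derives \emph{matching quadratic forms}: $J(\nu)-J^\star\approx\tfrac12 b^\intercal(K+H)b+\tfrac12\sum r_i^2\tr(\Sigma_iH_i)+\int_{\Theta_0}J'_{\nu^\star}\,d\nu$ (Proposition~\ref{prop:localexpansion}) and $\|g_\nu\|^2\approx b^\intercal(K+H)^2b+\sum r_i^2\tr(\Sigma_iH_i^2)+\|g_\nu\|^2_{L^2(\nu|_{\Theta_0})}$ (Proposition~\ref{prop:gradientestimate}), with all remainders controlled by Lemma~\ref{lem:distancebound}. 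Sharpness then follows because $K+H\succ0$ makes $(K+H)^2\gtrsim K+H$ with a spectral constant. Your proposal instead introduces the $\nu$-dependent critical points $\theta_i^\nu$ and $u_i=\min_{B_i}J'_\nu$ and hopes to close via a local PL inequality plus a Young split against $K$. That is a genuinely different decomposition.

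Unfortunately Step~2 contains a gap that breaks Step~3. The claimed ``pointwise Taylor comparison $|J'_\nu|^2\gtrsim u_i^2$ on $B_i$'' is false whenever $u_i<0$: then $J'_\nu$ changes sign inside $B_i$, so $|J'_\nu|$ vanishes on a hypersurface in $B_i$, and if $\nu|_{B_i}$ concentrates near that zero level set you get $\int_{B_i}|J'_\nu|^2\,d\nu\ll w_i u_i^2$ while $w_i u_i^2>0$. And $u_i<0$ does occur for $\nu$ arbitrarily close to $\nu^\star$ — e.g.\ shrink one atom's mass slightly below $(r_i^\star)^2$, which pushes $J'_\nu(\theta_i^\star)$ negative. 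Since your Step~3 Young split $\sum_i(w_i-(r_i^\star)^2)u_i\le\tfrac\alpha2\sum_iw_iu_i^2+\tfrac1{2\alpha}\sum_i(w_i-(r_i^\star)^2)^2/w_i$ relies on the $\alpha\sum_iw_iu_i^2$ term being available in the gradient lower bound, the argument does not close in exactly the regime where the cross term is positive with $u_i<0$ and $w_i<(r_i^\star)^2$. (Trying to rescue it with the $\beta$-part also fails without constraints coupling $\beta/\alpha$ to the sub-optimality, which the theorem must avoid since $J_0,\kappa_0$ are required to be independent of $\alpha,\beta$.) The paper sidesteps this sign problem entirely because it never evaluates $J'_\nu$ at its own local minimizer; the $b$-coordinate is the barycentric deviation $\bar\theta_i-\theta_i^\star$, a linear functional of $\nu$, and the coupling between atoms appears only through the single matrix $K+H$, not through a nonlinear quantity $u_i$ whose sign is uncontrolled. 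A secondary, less fundamental issue: your Step~3 ``bi-Lipschitz from $\sigma_{\min}(K)$'' compresses a nontrivial computation (identifying the linear map $b\mapsto(u_i,\nabla J'_\nu(\theta_i^\star))$ with $K$ in the scaled basis, controlling the far contribution, and absorbing the $\theta_i^\nu-\theta_i^\star$ displacements) that would need to be spelled out; but the sign gap above is the one that would need a new idea, not just more work.
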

While the objective is non-convex in the Wasserstein geometry and has typically an infinity of bad stationary points, this inequality guarantees exponential convergence to global minimizers of various gradient-based dynamics as long as their initialization $\nu_0$ has a small enough objective value. Crucially, the specific structure of $\nu$ does not matter, beyond the fact that is is close enough to optimality: it applies indifferently to discrete and absolutely continuous measures.
Once Theorem~\ref{th:gradientinequality} is established, it is straightforward to prove exponential convergence of gradient flow and gradient descent.

\begin{corollary}[Local convergence of gradient flow]\label{cor:localGF}
Under {\sf (A1-5)}, let $J_0$ and $\kappa_0$ be given by Theorem~\ref{th:gradientinequality}. Consider $(\nu_t)_{t\geq 0}$ a projected gradient flow for $J$ as in Eq.~\eqref{eq:projectedGF}. If $J(\nu_0)\leq J_0$ then 
\begin{align*}
J(\nu_t) - J^* &\leq  (J(\nu_0) - J^*) \exp(-2\kappa_0 \min\{\alpha,\beta\}  t).
\end{align*}
\end{corollary}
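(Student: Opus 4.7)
The plan is to combine the sharpness inequality of Theorem~\ref{th:gradientinequality} with the standard energy identity for gradient flows to derive a Grönwall-type differential inequality for the suboptimality gap $J(\nu_t)-J^\star$.

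First, I would compute the energy dissipation along the projected gradient flow. Taking $\varphi = J'_{\nu_t}$ as a test function in the weak formulation of Eq.~\eqref{eq:projectedGF} (which is justified by {\sf(A1)} since $J'_{\nu_t}$ is a twice differentiable function on $\Theta$ whose regularity propagates via Eq.~\eqref{eq:firstvariation}), one obtains formally
\begin{equation*}
\frac{\d}{\d t} J(\nu_t) \;=\; \int_\Theta J'_{\nu_t}\,\d(\partial_t \nu_t) \;=\; -\int_\Theta \bigl(4\alpha |J'_{\nu_t}|^2 + \beta \Vert \nabla J'_{\nu_t}\Vert_\theta^2\bigr)\d\nu_t \;=\; -\Vert g_{\nu_t}\Vert^2_{L^2(\nu_t)}.
\end{equation*}
Rigorously, this is the defining energy-dissipation identity of Wasserstein gradient flows, and it can be established along the lines of the existence result in Appendix~\ref{app:existence} (referenced just after Eq.~\eqref{eq:wassersteingradient}).

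Second, I would set up a continuity argument so that Theorem~\ref{th:gradientinequality} can be applied for all $t\geq 0$. Since the energy identity above shows $t\mapsto J(\nu_t)$ is non-increasing, the assumption $J(\nu_0)\leq J_0$ yields $J(\nu_t)\leq J_0$ for all $t\geq 0$. Consequently, the sharpness inequality is available throughout the trajectory, giving
\begin{equation*}
\frac{\d}{\d t}\bigl(J(\nu_t)-J^\star\bigr) \;=\; -\Vert g_{\nu_t}\Vert^2_{L^2(\nu_t)} \;\leq\; -2\kappa_0 \min\{\alpha,\beta\}\,\bigl(J(\nu_t)-J^\star\bigr).
\end{equation*}

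Finally, Grönwall's lemma applied to this differential inequality immediately yields the announced exponential decay
\begin{equation*}
J(\nu_t)-J^\star \;\leq\; \bigl(J(\nu_0)-J^\star\bigr)\exp\bigl(-2\kappa_0\min\{\alpha,\beta\}\, t\bigr).
\end{equation*}
The only delicate point in this proof is the rigorous justification of the energy identity: the weak formulation~\eqref{eq:WGF} is stated for differentiable test functions $\varphi:\Omega\to \RR$, and one has to verify that $J'_{\nu_t}\circ \mathrm{pr}_\theta$ (lifted to $\Omega$) has sufficient regularity in both space and time to be a valid test function, or alternatively approximate it. Under {\sf(A1)}, this regularity is inherited from the smoothness of $\phi$ and $R$ and from the absolute continuity of the curve $(\mu_t)$ in $\Pp_2(\Omega)$, so the argument goes through without any additional conceptual work.
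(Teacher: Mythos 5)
Your proof is correct and follows the same route as the paper: compute the energy dissipation $\frac{\d}{\d t}J(\nu_t)=-\Vert g_{\nu_t}\Vert^2_{L^2(\nu_t)}$, use monotonicity to keep the trajectory in the sublevel $\{J\leq J_0\}$, apply the sharpness inequality, and conclude by Grönwall. You spell out the continuity and regularity justifications more explicitly than the paper, but the argument is the same.
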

\begin{proof}
By Theorem~\ref{th:gradientinequality} and direct computations, one has 
\[
\frac{d}{dt} (J(\nu_t)-J^\star) = \int_{\Omega} J'_{\nu_t} \d(\partial_t \nu_t) = - \Vert g_{\nu_t}\Vert^2_{L^2(\nu_t)} \leq -2\kappa_0\min\{\alpha,\beta \} (J(\nu_t)-J^\star)
\]
and the result follows by Gr\"onwall's lemma.
\end{proof}

\begin{corollary}[Local convergence of gradient descent]\label{cor:localGD}
 Assume {\sf(A1-5)}, let $J_0$ and $\kappa_0$ be given by Theorem~\ref{th:gradientinequality}, and let $\Ret$ be a retraction compatible with the cone structure (Definition~\ref{def:compatibleretraction}). There exists $\eta_{\max}>0$ such that for  any projected gradient descent $(\nu_k)_{k\geq 0}$ for $J$ following recursion~\eqref{eq:GD}, if $J(\nu_0)\leq J_0$ and $\max\{\alpha ,\beta \}\leq \eta_{\max}$, then
 \begin{align*}
 J(\mu_k)-J^\star \leq (J(\nu_0) - J^\star) \left(1-\kappa_0 \min\{\alpha,\beta\} \right)^k.
 \end{align*}
\end{corollary}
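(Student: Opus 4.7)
The plan is to combine the two ingredients we already have in hand: the discrete descent property from Lemma~\ref{lem:sufficientdescrease} and the sharpness (Polyak-\L{}ojasiewicz) inequality from Theorem~\ref{th:gradientinequality}. The descent property ensures that the iterates stay in the sublevel set $\{J\leq J_0\}$ where the sharpness inequality is valid, and chaining the two bounds at each step yields the geometric decrease.

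More concretely, I would first fix $\eta_{\max}$ by taking the minimum of (i) the constant provided by Lemma~\ref{lem:sufficientdescrease} applied with $J_{\max}=J_0$ (which guarantees that the iteration $\nu_{k+1}=\h\mu_{k+1}$ is well-defined whenever $J(\nu_k)\le J_0$ and satisfies $J(\nu_{k+1})-J(\nu_k)\leq -\tfrac12\Vert g_{\nu_k}\Vert^2_{L^2(\nu_k)}$) and (ii) $1/\kappa_0$ (so that the contraction factor $1-\kappa_0\min\{\alpha,\beta\}$ is nonnegative). Since $J(\nu_0)\leq J_0$, an immediate induction based on the descent property shows $J(\nu_k)\leq J(\nu_{k-1})\leq \cdots \leq J(\nu_0)\leq J_0$ for all $k\geq 0$, so Theorem~\ref{th:gradientinequality} can be applied at every iterate.

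Putting the two inequalities together at step $k$ gives
\[
J(\nu_{k+1})-J^\star \;=\; \bigl(J(\nu_k)-J^\star\bigr) + \bigl(J(\nu_{k+1})-J(\nu_k)\bigr)
\;\leq\; \bigl(J(\nu_k)-J^\star\bigr) - \tfrac12 \Vert g_{\nu_k}\Vert^2_{L^2(\nu_k)}
\;\leq\; \bigl(1-\kappa_0\min\{\alpha,\beta\}\bigr)\bigl(J(\nu_k)-J^\star\bigr).
\]
Iterating this inequality from $k=0$ yields the stated geometric bound. There is no real obstacle here: the only point to check with some care is that $\eta_{\max}$ can be chosen independently of $k$, which is exactly what the last paragraph of the proof of Lemma~\ref{lem:sufficientdescrease} provides (since $\sup_{J(\nu)\leq J_0}\nu(\Theta)$ and $\sup_{J(\nu)\leq J_0}\Vert J'_\nu\Vert_{\mathcal C^2}$ are finite under {\sf(A1)}, and the sublevel set is invariant under the iteration).
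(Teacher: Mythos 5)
Your proof is correct and follows the same route as the paper's: combine the descent property of Lemma~\ref{lem:sufficientdescrease} with the sharpness inequality of Theorem~\ref{th:gradientinequality} to get the per-step contraction, then iterate. The only difference is that you spell out the invariance of the sublevel set and the independence of $\eta_{\max}$ from $k$, which the paper leaves implicit.
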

\begin{proof}
By Lemma~\ref{lem:sufficientdescrease}, there exists $\eta_{\max}$ such that if $\max\{\alpha,\beta\} \leq \eta_{\max}$, then $J(\nu_{k+1}) - J(\nu_k)  \leq - \frac12 \Vert g_{\nu_k}\Vert^2_{L^2(\nu_k)}$. Combining this inequality with Theorem~\ref{th:gradientinequality}, one has $J(\nu_{k+1})- J(\nu_k)\leq -\kappa_0 \min\{\alpha,\beta\} (J(\nu_k)-J^\star)$. Rearranging the terms, we get $J(\nu_{k+1})-J^\star \leq (1-\kappa_0\min\{\alpha,\beta\})(J(\nu_k)-J^\star)$ and the result follows by recursion.
\end{proof}

\subsection{Proof strategy for the sharpness theorem}
The proof of Theorem~\ref{th:gradientinequality}, in Appendix~\ref{app:sharpness}, is based on a local expansion of $J(\nu)$ in terms of some local moments of $\nu$. For a radius $\tau>0$ (that shall be fixed at some small enough value in the course of the proof), we define the sets  for $i\in \{1,\dots,m^\star\}$,
\begin{align*}
\Theta_i \coloneqq  \{ \theta \in \Theta \;;\; \dist(\theta,\theta_i) < \tau \}.
\end{align*}
We assume that $\tau$ is smaller than $1$ and small enough so that these sets together with $\Theta_0 \coloneqq \Theta \setminus \cup_{i=1}^{m^\star} \Theta_i$ form a partition of $\Theta$ and that the exponential map at $\theta_i$ has injectivity radius larger than $\tau$, for $i\in \{1,\dots,m^\star\}$. We then say that $\tau$ is an \emph{admissible} radius. 
\begin{definition}[Local moments]\label{def:localfeatures}
Given an admissible radius $\tau>0$ and a measure $\nu\in \Mm_+(\Theta)$, we define for $i\in \{0,\dots,m^\star\}$ the local masses $\bar r_i^2 = \nu(\Theta_i)$ and the local means $\bar \theta_i \coloneqq \frac1{\bar r_i^2} \int_{\Theta_i} \theta\d\nu(\theta)$ if $\nu(\Theta_i)>0$ and $\bar \theta_i = \theta_i$ otherwise. Finally, we define for $i\in \{1,\dots,m^\star\}$ the \emph{weighted biases}
\begin{align*}
b_i^r \coloneqq \frac{\bar r_i^2 -r_i^2}{2\alpha r_i},&&
b_i^\theta \coloneqq \frac{\bar r_i^2}{\beta r_i}(\bar \theta_i-\theta_i), &&
b_i = (b_i^r,b_i^\theta),
\end{align*}
and the \emph{weighted covariances} $\Sigma_i \coloneqq \frac1{\bar r_i^2 \beta^2} \int_{\Theta_i} (\theta - \bar \theta_i)(\theta - \bar \theta_i)^\intercal \d\nu(\theta)$.
\end{definition}

If $\nu$ has only $1$ atom in each $\Theta_i$ then its spatial coordinate is $\bar \theta_i$ and $\Sigma_i=0$. When moreover $\nu(\Theta_0)=0$, the optimization reduces to a more classical gradient flow in $\Omega^{m^\star}$ which local behavior has already been studied~\cite{traonmilin2018basins,flinth2019linear}, but obtaining measures of this form is typically almost as hard as solving the original problem. This decomposition can be reminiscent of proof techniques used to study log-Sobolev inequalities (another type of sharpness inequality in Wasserstein space~\cite{blanchet2018family}) in the small temperature regime~\cite{menz2014poincare}.

It turns out that the local moments of Definition~\ref{def:localfeatures} are sufficient to characterize the behavior of $J$ near optimality. In particular, we have the following approximations for $J$ and its gradient around optimality. These formulas are obtained as an intermediate step in the proof of Theorem~\ref{th:gradientinequality} and follow by combining the bounds of Proposition~\ref{prop:localexpansion} and Proposition~\ref{prop:gradientestimate} with Lemma~\ref{lem:distancebound}.

\begin{proposition}[Local expansion]\label{prop:expansions}
Assuming {\sf (A1-5)}, for any $\nu\in \Mm_+(\Theta)$ it holds
 \begin{align}\label{eq:expansion}
J(\nu)-J^\star &= \frac12 b^\intercal (K+ H) b + \frac12 \sum_{i=1}^{m^\star} r_i^2 \tr(\Sigma_i H_i)  + \int_{\Theta_0} J'_{\nu^\star} \d\nu + \mathrm{err}(\tau,J(\nu)-J^\star)\\
\intertext{and}
\frac12 \Vert g_\nu\Vert^2_{L^2(\nu)} &= \frac12 b^\intercal (K+ H)^2 b + \frac12 \sum_{i=1}^{m^\star} r_i^2 \tr(\Sigma_i H_i^2)  + \frac12 \Vert g_\nu\Vert^2_{L^2(\nu\vert_{\Theta_0})} + \mathrm{err}(\tau,J(\nu)-J^\star)
 \end{align}
 where $\Vert g_\nu\Vert^2_{L^2(\nu\vert_{\Theta_0})}  = \int_{\Theta_0} (\alpha \vert J'_\nu\vert^2+\beta \Vert\nabla J'_\nu\Vert^2_\theta )\d\nu$ and $\mathrm{err}(\tau,\Delta) = O(\Delta \tau+ \Delta^{\frac32}\tau^{-6})$.
 \end{proposition}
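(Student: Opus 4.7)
The plan is to carry out a second-order Taylor expansion of $J$ around the sparse minimizer $\nu^\star$, reading the expansion entirely in terms of the local moments of Definition~\ref{def:localfeatures}. Concretely, I decompose $\nu = \sum_{i=0}^{m^\star}\nu|_{\Theta_i}$, compare each $\nu|_{\Theta_i}$ (for $i\geq 1$) with $r_i^2\delta_{\theta_i}$ in normal coordinates around $\theta_i$, and keep the $\Theta_0$ piece symbolic, exploiting strict slackness from {\sf(A5)} to ensure $J'_{\nu^\star}$ acts as a useful Lyapunov function there.

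\textbf{Value expansion.} I Taylor-expand $R$ to second order around $f^\star$ and absorb $\lambda(\nu-\nu^\star)(\Theta)$ into the linear functional via $J'_{\nu^\star} = \langle \phi,\nabla R(f^\star)\rangle + \lambda$, obtaining
\begin{equation*}
J(\nu)-J^\star = \int J'_{\nu^\star}\,d(\nu-\nu^\star) + \tfrac{1}{2}\langle f-f^\star, d^2R_{f^\star}(f-f^\star)\rangle + O(\|f-f^\star\|^3).
\end{equation*}
Since $J'_{\nu^\star}$ and $\nabla J'_{\nu^\star}$ vanish at each $\theta_i$ by Proposition~\ref{prop:optimalitycondition}, a second-order Taylor expansion on each $\Theta_i$ gives $\int_{\Theta_i} J'_{\nu^\star}\,d\nu = \tfrac{1}{2}\beta^{-2}\int_{\Theta_i}(\theta-\theta_i)^\intercal H_i(\theta-\theta_i)\,d\nu + O(\tau^3\bar r_i^2)$. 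Decomposing the second moment as $\bar r_i^2\beta^2\Sigma_i + \bar r_i^2(\bar\theta_i-\theta_i)(\bar\theta_i-\theta_i)^\intercal$ and substituting the definition of $b_i^\theta$ produces the $\tfrac{1}{2}\sum_i r_i^2\tr(\Sigma_i H_i)$ term together with the $\theta$-block of $\tfrac{1}{2}b^\intercal H b$ (up to the factor $\bar r_i^2/r_i^2-1 = O(\alpha b_i^r/r_i)$, absorbed into the error). For the quadratic term in $R$, a Taylor expansion of $\phi$ around each $\theta_i$ yields $\int_{\Theta_i}\phi\,d\nu - r_i^2\phi(\theta_i) = r_i\langle b_i,\bar\nabla\phi(\theta_i)\rangle + (\text{quadratic in local moments})$, so $f-f^\star = \sum_i r_i\bar\nabla\phi(\theta_i)\,b_i + \int_{\Theta_0}\phi\,d\nu + O(\cdots)$. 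Plugging this into $d^2R_{f^\star}$ yields $\tfrac{1}{2}b^\intercal K b$ by definition of $K$, plus cross terms with $\Theta_0$ and higher-order corrections.

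\textbf{Gradient expansion.} For the $L^2$ norm of $g_\nu$, I linearize $J'_\nu$ and $\nabla J'_\nu$ in $\nu-\nu^\star$: since $\nabla R(f) = \nabla R(f^\star) + d^2R_{f^\star}(f-f^\star) + O(\|f-f^\star\|^2)$, I get $J'_\nu(\theta) = J'_{\nu^\star}(\theta) + \langle\phi(\theta), d^2R_{f^\star}(f-f^\star)\rangle + \mathrm{(h.o.t.)}$ and similarly for $\nabla J'_\nu$. Evaluated at $\theta_i$ and using the expression of $f-f^\star$ above, the rescaled tangent vector $(2\sqrt{\alpha}\,J'_\nu(\theta_i),\sqrt{\beta}\,\nabla J'_\nu(\theta_i))$ becomes, to leading order, the $i$-th block of $(K+H)b$. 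Squaring and integrating against $\nu|_{\Theta_i}$ (whose mass is $\bar r_i^2 = r_i^2 + O(\alpha b_i^r r_i)$) therefore reproduces $b^\intercal(K+H)^2 b$; the remaining pointwise variation of $\nabla J'_\nu$ across $\Theta_i$, expanded via $\nabla^2 J'_{\nu^\star}(\theta_i) = H_i/\beta^2$, contributes exactly $\sum_i r_i^2 \tr(\Sigma_i H_i^2)$. The $\Theta_0$ contribution is left as $\|g_\nu\|^2_{L^2(\nu|_{\Theta_0})}$, which is the intended output.

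\textbf{Error control (main obstacle).} The technical heart is showing that all remainders collapse into $\mathrm{err}(\tau,\Delta) = O(\Delta\tau + \Delta^{3/2}\tau^{-6})$ with $\Delta = J(\nu)-J^\star$. Two mechanisms generate these errors: (i) Taylor cubic remainders weighted by local masses, of the form $\tau^3\bar r_i^2$ or $\tau^2 \beta r_i\|b_i^\theta\|$, which combine with the uniform mass bound and $\nu(\Theta_0) = O(\Delta^{1/2})$ deduced from Proposition~\ref{prop:distanceinequality} to yield the $\Delta\tau$ piece; (ii) products of three moment factors, such as $b\cdot b\cdot\Sigma^{1/2}$, $b\cdot\|\nu|_{\Theta_0}\|$, or the factor $\bar r_i^2/r_i^2 - 1$ multiplying a quadratic form, in which negative powers of $\tau$ appear because the normalizations $\bar r_i^2/(\alpha r_i)$ and $\bar r_i^2/(\beta r_i)$ defining $b_i$ can blow up when mass escapes to $\Theta_0$. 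I expect the $\tau^{-6}$ exponent to arise from at most three such normalizations multiplying together, each controlled by $\Delta^{1/2}\tau^{-O(1)}$ via Proposition~\ref{prop:distanceinequality}. No new conceptual ingredient is required beyond the expansions above; the bulk of the work is a careful bookkeeping of Taylor remainders using the a priori Wasserstein-Fisher-Rao bound $\widehat W_2(\nu,\nu^\star)\lesssim\Delta^{1/2}$.
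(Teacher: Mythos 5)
Your plan is structurally the same as the paper's: Taylor-expand $R$ to second order around $f^\star$, expand $J'_{\nu^\star}$ and $\phi$ near each $\theta_i$ to read off the local moments, identify $b^\intercal K b$ and $\tr(\Sigma_i H_i)$ (and, for the gradient, $b^\intercal(K+H)^2 b$ and $\tr(\Sigma_i H_i^2)$), and control remainders via an a priori bound on the moments in terms of $\Delta = J(\nu) - J^\star$. The main-term identifications are correct, including the $\bar r_i^2/r_i^2 = 1 + O(\alpha b_i^r/r_i)$ bookkeeping and the gradient's $(K+H)b$ leading order.

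The gap is in the error control, which is the one genuinely delicate step. You lean on Proposition~\ref{prop:distanceinequality} to get $\widehat W_2(\nu,\nu^\star) \lesssim \Delta^{1/2}$ and then assert the $\tau^{-6}$ falls out of ``at most three such normalizations, each $\Delta^{1/2}\tau^{-O(1)}$.'' This is circular with respect to what actually needs to be proved: Proposition~\ref{prop:distanceinequality} fixes a specific admissible $\tau_0$ and buries the $\tau$-dependence in the constant $C'$, whereas the proposition you are proving keeps $\tau$ as a free parameter and asserts the precise exponent $\tau^{-6}$. What is actually needed — and what the paper supplies as Lemma~\ref{lem:distancebound} — is the $\tau$-explicit estimate $W_\tau(\nu) \leq C\tau^{-2}(J(\nu)-J^\star)^{1/2}$ together with the sharper $\|\tilde b^\theta\|^2 + \|s\|^2 \leq C'(J(\nu)-J^\star)$ (no $\tau$-loss on those terms). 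With these in hand, the remainder $O(\tau(\|\tilde b^\theta\|^2+\|s\|^2) + W_\tau(\nu)^3)$ becomes $O(\tau\Delta + \tau^{-6}\Delta^{3/2})$. Without the explicit $\tau^{-2}$ per moment, you cannot land on $\tau^{-6}$ rather than some unspecified power. Your side claim that $\nu(\Theta_0) = O(\Delta^{1/2})$ is also off: what one actually gets is $\bar r_0^2 = \nu(\Theta_0) = O(\tau^{-2}\Delta)$, which is precisely the $\tau$-loss that accumulates into $\tau^{-6}$ in the cubic remainder. So the missing ingredient is not more Taylor expansion but the $\tau$-quantitative moment bound, which Proposition~\ref{prop:distanceinequality} as stated does not deliver.
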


\subsection{Discussion on the local behavior}
 
Let us now explore what the expansion from Proposition~\ref{prop:expansions} teaches us about the local behavior of the dynamics. In order to simplify the discussion, let us fix a small admissible radius $\tau_0$ and ignore the error terms in Proposition~\ref{prop:expansions}.

\paragraph{Effect of over-parameterization.} When there is no over-parameterization ($m=m^\star$) and we have a single particle in the neighborhood $\Theta_i$ of each optimal particle, then there is no local variance: $\Sigma_i=0$ for $i=1,\dots,m^\star$. In this case, we recover the Taylor expansion of $F_{m^\star}$ around its minimizer
\[
J(\nu) -J^\star \approx \frac12 b^\intercal (K+ H) b,
\]
and the local convergence rate is dictated by the conditioning of $(K+H)$. Now, for an arbitrary over-parameterization i.e.\ $\nu \in \Mm_+(\Theta)$ but with the support of the solution approximately identified, i.e.\ $\nu(\Theta_0)=0$, the objective is still entirely characterized locally by the local moments of $\nu$, since
\[
J(\nu) -J^\star \approx \frac12 b^\intercal K b + \frac12 \sum_{i=1}^{m^\star} b_i^\intercal H_i b_i + \frac12 \sum_{i=1}^{m^\star} r_i^2 \tr(\Sigma_i H_i).
\]
This expression gives a clear picture of the energy landscape, so let us comment on it. If we think of the particles in $\Theta_i$ as a cluster, then the first term consists in a global interaction between the clusters, which only depends on the biases of each cluster relatively to their respective ground truth particles. The two other terms are local interactions within each cluster, which are due to the local curvature of $J'_{\nu^\star}$ at each $\theta_i$. Note in particular that the only term in this expansion that penalizes the variance of each cluster $\Sigma_i$ consists of local interactions.

\paragraph{Effect of the regularization parameter.} In this paper, the assumption that $\lambda$ is non-zero is not crucial as such. Instead the crucial assumption for the local analysis is~{\sf(A5)}. Still, this assumption is intimately connected to the regularization: in the signed case (detailed in Appendix~\ref{app:signed}), it is necessary that $\lambda>0$ to have~{\sf(A5)}, because with $\lambda=0$, the minimizer $\nu^\star$ is a global minimizer in the space of signed measures and thus the global optimality condition $J'_{\nu^\star}=0$ holds.
In fact, a finer analysis of the behavior as $\lambda\to 0$ is possible in the signed case: it can be shown that one has (emphasizing the dependency in $\lambda$ in the notation): 
\begin{align*}
K_\lambda = K_0 +o(\lambda),&& 
H_{\lambda} = \lambda H_0 + o(\lambda), &&
J'_{\nu*,\lambda} = \lambda J'_0 +o(\lambda),
\end{align*}
for some $K_0$, $H_0$ and $J'_{0}$~\cite[Prop.~1 and Thm.~2]{duval2015exact} (where the result is proved for $R$ being the square loss but can be directly generalized to $R$ smooth and strongly convex around the minimizer).
Under the assumption that $J'_0$ is non-degenerate in the sense of {\sf (A5)}, as soon as $\nu(\Theta_0)>0$ or $\Sigma_i\neq 0$ for some $i\in \{1,\dots,m^\star\}$, the local rate $\kappa_0$ is thus of order $\lambda$ and for $\lambda=0$, the exponential convergence rate is lost. This shows that regularization is necessary for fast local convergence in the signed case, and in particular -- remembering the previous paragraph -- for the variance of each cluster of particles to vanish quickly. Note that it is an open question to even show local convergence when {\sf(A5)} does not hold.

\paragraph{Choice of the metric and conditioning.} While our statements, in particular Corollary~\ref{cor:localGD}, seem to imply that it is best to choose $\alpha=\beta$, this is in fact just an artefact of the way the upper bounds are presented, with some hidden constants. Instead, these parameters should be chosen, as usual, so as to make the local expression of $J$ above well-conditioned. Without additional information, a possible heuristic is to make the block diagonal matrix $\diag(K+H,H)$ well-conditioned by choosing $(\alpha,\beta)$ satisfying $2\alpha \Vert \phi\Vert_\infty \approx \beta \Lip(\phi)$. 

\paragraph{Polynomial dependency.}
It can be seen from the proof of Theorem~\ref{th:gradientinequality} that $(J_0-J^\star)^{-1}$ and $\kappa_0^{-1}$ depend  polynomially on the characteristics of the problem, which are the regularization $\lambda$, the regularity parameters of $\phi$ and $R$, the ratio $\max_i r_i/\min_i r_i$, the inverses of the $\sigma_{\min}(\nabla^2R(f^\star))$, $\sigma_{\min}(H)$, $\sigma_{\min}(K)$ and finally the quantity $v^\star$ that quantifies the strict slackness assumption, in the following sense: $v^*>0$ is such that for any local minimum $\theta$ of $J'_{\nu^\star}$, either $\theta = \theta_i$ for some $i\in \{1,\dots,m^\star\}$ or $J'_{\nu^\star}(\theta)\geq v^*$.

\section{Quantitative global convergence}\label{sec:global}
There are several convex optimization-based algorithms that are known to return approximate minimizers of $J$ which are mixtures of atoms (with typically $m>m^\star$) with a guaranteed complexity, see Section~\ref{sec:related}. Starting from any such approximate minimizer, the results of the previous section imply that conic particle gradient descent converges exponentially fast to minimizers of $J$. However, such a ``two-algorithms'' approach comes with a drawback: one has to decide when to switch from one algorithm to another. In this section, we show that it is possible to reach global optimality by only performing non-convex gradient descent. This is true under two main conditions: (i) the initialization samples $\Theta$ densely enough, and (ii) the ratio $\beta/\alpha$ is small, at least in the early stages of the algorithm.

\subsection{Statement of the main results}

In order to state the condition on the initialization, we first choose a reference measure $\rho \in \Mm_+(\Theta)$ with a smooth positive density, also denoted by $\rho$, which represents our prior knowledge about the solution $\nu^\star$. We introduce the quantity (analogous to a log-likelihood)
\[
\bar\Hh(\nu^\star,\rho) \coloneqq \sum_{i=1}^{m^\star} r_i^2 \log\left(\frac{r_i^2}{\rho(\theta_i)}\right) - \nu^\star(\Theta)+\rho(\Theta).
\]
It quantifies how good is $\rho$ as a prior for the unknown minimizer $\nu^\star$ and we will see that our convergence bounds are better when $\bar \Hh(\nu^\star,\rho)$ is smaller. If nothing is known about the optimal positions $\theta_i$, we should choose $\rho$ as a uniform density $\alpha \!\vol$ over $\Theta$ for some $\alpha>0$. Minimizing $\bar \Hh(\nu^\star,\alpha \vol)$ in $\alpha$ suggests to choose $\alpha = \frac{\nu^\star(\Theta)}{\vol(\Theta)}$.

To obtain an implementable algorithm, we then discretize $\rho$ and consider an initialization $\nu_0\in \Mm_+(\Theta)$ which is close to $\rho$ in the $W_\infty$ distance (our statements do not require $\nu_0$ to be discrete but this is necessary to obtain an implementable algorithm). We now state our main theorem.

\begin{theorem}[Global convergence of gradient flow]\label{thm:convergenceflow}
Under {\sf (A1-5)}, let $J_0$ and $\kappa_0$ given by Theorem~\ref{th:gradientinequality}, let $\rho = \rho\!\vol \in \Mm_+(\Theta)$ an absolutely continuous reference measure with $\log \rho$ $L$-Lipschitz and let  $B_{\nu_0} = \sup_{J(\nu)\leq J(\nu_0)} \Vert J'_{\nu}\Vert_{\BL}$, where $\nu_0\in \Mm_+(\Theta)$ is the initialization. For any $0<\epsilon\leq 1/2$, there exists $C_\epsilon >0$ that only depend on $\epsilon$ and bounds on the curvature of $\Theta$ such that if it holds $\beta/\alpha \leq (4B_{\nu_0}/\max\{1,L\})^2$,
\begin{align*}
W_\infty(\nu_0,\rho)\leq \frac{J_0-J^\star}{2B_{\nu_0}\nu^\star(\Theta)} &&\text{and}&&
\frac{\beta}{\alpha} \leq\left(  \frac{J_0-J^\star}{2(4B_{\nu_0})^{\epsilon}\left(\bar \Hh(\nu^\star,\rho) +B^2_{\nu_0} +C_\epsilon \nu^\star(\Theta)d \right)}\right)^\frac{2}{1-\epsilon}
\end{align*}
 then the projected gradient flow $(\nu_t)_{t\geq 0}$ initialized with $\nu_0$ converges to the global minimizer $\nu^\star$. Denoting $t_0 = 1/\sqrt{\alpha\beta}$ it satisfies, for $t\geq t_0$,
\begin{align*}
 J(\nu_t)-J^\star  \leq  (J_0 - J^*) \exp(-2\kappa_0\min\{\alpha,\beta\} (t-t_0)).
 \end{align*}
\end{theorem}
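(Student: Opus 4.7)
By Corollary~\ref{cor:localGF}, it suffices to show that $J(\nu_{t_0}) \leq J_0$ at time $t_0 = 1/\sqrt{\alpha\beta}$, since from then on the local exponential rate applies. I would produce this warm start by a perturbed mirror descent analysis of the projected flow~\eqref{eq:projectedGF}, viewing the Fisher--Rao component $-4\alpha \nu_t J'_{\nu_t}$ as entropic mirror descent and treating the transport component $\beta\,\mathrm{div}(\nu_t \nabla J'_{\nu_t})$ as a perturbation that vanishes as $\beta/\alpha \to 0$. The scale $t_0 = 1/\sqrt{\alpha\beta}$ is the natural one because it is the geometric mean of the Fisher--Rao time scale $1/\alpha$ and the transport time scale $1/\beta$.

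\textbf{Perturbed mirror descent inequality.} For an absolutely continuous comparison measure $\sigma$ with smooth density, differentiate $\Hh(\sigma,\nu_t)$ along~\eqref{eq:projectedGF}; since $\Theta$ is boundaryless, integration by parts gives
\[
\frac{d}{dt}\Hh(\sigma,\nu_t) \;=\; -4\alpha \int J'_{\nu_t}\, d(\nu_t-\sigma) \;+\; \beta \int \nabla J'_{\nu_t} \cdot \nabla \log(d\sigma/d\nu_t)\, d\sigma.
\]
Convexity of $J$ bounds the first term by $-4\alpha(J(\nu_t)-J(\sigma))$. The second term is controlled by Cauchy--Schwarz and the uniform estimate $\|J'_\nu\|_{\BL} \leq B_{\nu_0}$ valid along the descent (since $J$ decreases), yielding a perturbation of size $\beta\,\Xi(\sigma)$ where $\Xi$ depends on the smoothness of $\log(d\sigma/d\rho)$. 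Integrating on $[0,t_0]$ and using monotonicity of $J$ along the flow (so $J(\nu_{t_0}) \leq t_0^{-1}\int_0^{t_0} J(\nu_s)\,ds$) yields
\[
J(\nu_{t_0}) - J(\sigma) \;\leq\; \frac{\Hh(\sigma,\nu_0)}{4\alpha t_0} \;+\; \frac{\beta}{4\alpha}\,\Xi(\sigma).
\]

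\textbf{Construction of $\sigma$ and control of $\Hh(\sigma,\nu_0)$.} I would take $\sigma$ to be $\nu^\star$ with each atom $r_i^2\delta_{\theta_i}$ replaced by a smooth mass-$r_i^2$ bump supported in a geodesic ball of radius $\eta$ around $\theta_i$. Then $J(\sigma) - J^\star = O(\eta^2)$ by Taylor expansion of $J'_{\nu^\star}$ around its zeros, and a direct entropy computation gives $\Hh(\sigma,\rho) \leq \bar\Hh(\nu^\star,\rho) + C\,\nu^\star(\Theta)\, d\log(1/\eta)$ up to boundary curvature terms. Transferring from $\rho$ to $\nu_0$ uses the $W_\infty$-closeness of $\nu_0$ to $\rho$ together with the $L$-Lipschitzness of $\log\rho$: as long as the smoothing scale satisfies $\eta \gtrsim W_\infty(\nu_0,\rho)$, the entropy $\Hh(\sigma,\nu_0)$ is finite and exceeds $\Hh(\sigma,\rho)$ by at most $L\cdot W_\infty(\nu_0,\rho)\cdot \sigma(\Theta)$, which the assumed bound $W_\infty(\nu_0,\rho) \leq (J_0-J^\star)/(2B_{\nu_0}\nu^\star(\Theta))$ renders harmless.

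\textbf{Balancing in $\eta$ and main obstacle.} Plugging $t_0 = 1/\sqrt{\alpha\beta}$ into the previous inequality leaves, schematically, a bound of the form $\sqrt{\beta/\alpha}[\bar\Hh(\nu^\star,\rho) + C\nu^\star(\Theta)d\log(1/\eta)] + (\beta/\alpha)B_{\nu_0}^2/\eta^2 + O(\eta^2)$. Choosing $\eta$ as a suitable small power of $\beta/\alpha$ and converting $\log(1/\eta) \leq C_\epsilon(1/\eta)^\epsilon$ for arbitrary $\epsilon\in(0,1/2)$ trades the logarithm for a factor $(4B_{\nu_0})^\epsilon$ and yields a bound of the form $(\beta/\alpha)^{(1-\epsilon)/2}(4B_{\nu_0})^\epsilon(\bar\Hh(\nu^\star,\rho) + B_{\nu_0}^2 + C_\epsilon\nu^\star(\Theta)d)$, which by the stated hypothesis on $\beta/\alpha$ is at most $J_0-J^\star$. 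The additional condition $\beta/\alpha \leq (4B_{\nu_0}/\max\{1,L\})^2$ guarantees the separate smallness of the perturbation term relative to the mirror descent term. The delicate step is ensuring $\Hh(\sigma,\nu_0)$ is finite when $\nu_0$ is discrete (as produced by particle initialization): this forces $\eta \gtrsim W_\infty(\nu_0,\rho)$, so that every bump of $\sigma$ sits in a region where $\nu_0$ charges neighborhoods comparably to $\rho$, with local density controlled via the $L$-Lipschitz bound on $\log\rho$.
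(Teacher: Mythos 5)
Your high-level plan matches the paper's --- reach $J(\nu_{t_0})\le J_0$ at $t_0=1/\sqrt{\alpha\beta}$ via a KL-based mirror estimate, then invoke Corollary~\ref{cor:localGF} --- but the way you set up the perturbed mirror inequality has a gap that breaks the argument. Keeping the comparison measure $\sigma$ fixed, your derivative formula produces the term $\beta\int\nabla J'_{\nu_t}\cdot\nabla\log(d\sigma/d\nu_t)\,d\sigma$, and you treat it as $O(\beta\,\Xi(\sigma))$ with $\Xi$ depending only on the smoothness of $\sigma$. But $\nabla\log(d\sigma/d\nu_t)=\nabla\log\sigma-\nabla\log\nu_t$: the $\nabla\log\nu_t$ piece is the log-density gradient of the moving iterate, which is nowhere controlled in your setup (it can blow up along the flow, and does not even make sense when $\nu_t$ is atomic --- the case of greatest interest). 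The paper's Lemma~\ref{lem:mirrordecrease} sidesteps this by \emph{transporting the comparison measure}: $\nu^\epsilon_t$ is defined so that $\partial_t\nu^\epsilon_t=\div(\beta\nu^\epsilon_t\nabla J'_{\nu_t})$, i.e.\ it is pushed by the same transport field that moves $\nu_t$. Since relative entropy is invariant under a common diffeomorphism, the transport contribution to $\frac{d}{dt}\Hh(\nu^\epsilon_t,\nu_t)$ cancels exactly, leaving only the Fisher--Rao term $4\alpha\int J'_{\nu_t}\,d(\nu^\epsilon_t-\nu_t)$. The price is that $\nu^\epsilon_t$ drifts from $\nu^\epsilon_0$, but that drift is bounded by $\Vert\nu^\epsilon_s-\nu^\epsilon_0\Vert_{\BL}^*\le\beta B_{\nu_0}s$, which is exactly where the controlled $\beta B_{\nu_0}^2 t$ term in Lemma~\ref{lem:mirrordecrease} comes from.

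A second gap concerns finiteness of $\Hh(\sigma,\nu_0)$. You take $\sigma$ to be a smooth bump measure and claim that requiring $\eta\gtrsim W_\infty(\nu_0,\rho)$ ensures $\Hh(\sigma,\nu_0)<\infty$ when $\nu_0$ is discrete. It does not: if $\nu_0$ is a finite sum of Diracs, no absolutely continuous $\sigma$ can be absolutely continuous with respect to $\nu_0$, so $\Hh(\sigma,\nu_0)=+\infty$ regardless of $\eta$; ``charging neighborhoods comparably'' is not the same as absolute continuity. The paper's Lemma~\ref{lem:qdensity} instead pushes the smooth candidate $\nu_\epsilon$ forward through the optimal $W_\infty$ transport map from $\rho$ to $\nu_0$, producing a measure $\hat\nu_\epsilon$ that is absolutely continuous w.r.t.\ $\nu_0$, and uses that relative entropy is non-increasing under pushforwards, $\Hh(\hat\nu_\epsilon,\nu_0)\le\Hh(\nu_\epsilon,\rho)$; the price is only an extra $\nu^\star(\Theta)\cdot W_\infty(\nu_0,\rho)$ in the BL-distance, which is what the $W_\infty$ hypothesis in the theorem controls. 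With these two repairs in place, your remaining trade-off in $\eta$ and $\epsilon$ is essentially the paper's.
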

We also state a similar result for gradient descent, but without tracking the constants. The proof follows the same lines as that of Theorem~\ref{thm:convergenceflow} and is given in Appendix~\ref{app:gradientdescent}.
\begin{theorem}[Global convergence of gradient descent]\label{th:convergencedescent}
Under {\sf (A1-5)}, let $J_0$ and $\kappa_0$ be given by Theorem~\ref{th:gradientinequality} and  $\rho = \rho\!\vol \in \Mm_+(\Theta)$ an absolutely continuous reference measure with $\log \rho$ Lipschitz. For any $0 <\epsilon\leq 1/2$ and $\nu_0\in \Mm_+(\Theta)$, there exists $C,C'>0$ that depends on the characteristics of the problem and increasingly on $\bar \Hh(\nu^\star,\nu_0)$ and $1/\epsilon$, such that if 
\begin{align*}
W_\infty(\nu_0,\rho)\leq (J_0-J^\star)/C, && \alpha \leq (J_0 - J^*)^{1+\epsilon/2}/C, &&\text{and}&& \beta \leq (J_0 - J^*)\alpha^2/C'
\end{align*}
 then the projected gradient descent $(\nu_k)_{k\in \NN}$ initialized with $\nu_0$  converges to the global optimum $\nu^\star$. Denoting $k_0 = C/(J_0 - J^*)^{2+\epsilon}$ it satisfies, for $k\geq k_0$,
\begin{align*}
  J(\nu_k)-J^\star \leq (J(\nu_0) - J^\star) \left(1-\kappa_0\cdot\min\{\alpha,\beta\} \right)^{k-k_0}.
  \end{align*}
  \end{theorem}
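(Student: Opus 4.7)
The plan is to discretize the two-phase strategy used in the proof of Theorem~\ref{thm:convergenceflow}. The key intuition is that when $\beta/\alpha$ is very small, the conic particle gradient descent is a \emph{perturbed entropic mirror descent} on $\Mm_+(\Theta)$: Taylor expanding any compatible retraction (Definition~\ref{def:compatibleretraction}), the mass update reads, up to $O(\alpha^2)$, the multiplicative step $r_i^{(k+1)} \approx r_i^{(k)}(1 - 2\alpha J'_{\nu_k}(\theta_i^{(k)}))$, which is an entropic mirror descent step on the masses for the Bregman-type divergence $\bar\Hh$, while the spatial update $\theta_i^{(k+1)} \approx \theta_i^{(k)} - \beta \nabla J'_{\nu_k}(\theta_i^{(k)})$ is a perturbation of order $\beta$.

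I would first analyze a \emph{mirror descent phase} for iterations $k \le k_0$. Adapting the classical three-point identity for entropic mirror descent, and combining the descent property of Lemma~\ref{lem:sufficientdescrease} with convexity of $J$, I derive
\[
\bar\Hh(\nu^\star,\nu_{k+1}) - \bar\Hh(\nu^\star,\nu_k) \leq -2\alpha\bigl(J(\nu_k) - J^\star\bigr) + \mathrm{err}_k,
\]
where $\mathrm{err}_k$ absorbs the $O(\alpha^2)$ retraction residual and a spatial contribution of order $\beta \cdot \nu^\star(\Theta) \cdot \Vert \nabla J'_{\nu_k}\Vert_\infty$, which arises when one expands $\log \nu_{k+1}$ along the transport $T_k^\theta$ to second order near each optimal atom $\theta_i$. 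The $W_\infty$-closeness of $\nu_0$ to $\rho$ (with $\log \rho$ Lipschitz) is then invoked to make $\bar\Hh(\nu^\star,\nu_0)$ finite and comparable to $\bar\Hh(\nu^\star,\rho) + \poly(W_\infty(\nu_0,\rho))$, which is why $\bar\Hh(\nu^\star,\nu_0)$ appears inside the constants $C,C'$. Telescoping and taking a minimum yields
\[
\min_{0\leq j<k_0}\bigl(J(\nu_j) - J^\star\bigr) \leq \frac{\bar\Hh(\nu^\star,\nu_0)}{2\alpha k_0} + \frac{\max_j \mathrm{err}_j}{2\alpha}.
\]
Choosing $k_0 \asymp 1/(J_0 - J^\star)^{2+\epsilon}$, $\alpha \asymp (J_0 - J^\star)^{1+\epsilon/2}$, and $\beta \ll \alpha^2 (J_0 - J^\star)$ balances the two terms so that at least one iterate $\nu_{k_1}$ with $k_1 \le k_0$ enters the local basin $\{J \le J_0\}$.

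The second phase then invokes Corollary~\ref{cor:localGD}: since $J(\nu_{k_1}) \le J_0$ and $\max\{\alpha,\beta\} \le \eta_{\max}$ by the step-size hypotheses, gradient descent from $\nu_{k_1}$ contracts at rate $1 - \kappa_0 \min\{\alpha,\beta\}$; combined with the monotonicity of $J$ across the two phases (Lemma~\ref{lem:sufficientdescrease}) one recovers the stated exponential bound for $k \ge k_0$. The main technical obstacle will be the perturbed mirror descent analysis of phase one: unlike its continuous-time counterpart, one must carefully handle the $O(\alpha^2)$ discretization residuals \emph{together with} the spatial drift's effect on $\bar\Hh(\nu^\star,\nu_k)$, which requires a second-order Taylor expansion of $\log \nu_k$ around each optimal atom and uniform control (across all of phase one) of $\Vert \nabla J'_{\nu_k}\Vert_\infty$ as well as of the density regularity of $\nu_k$. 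It is this uniform propagation that dictates the tight scaling $\beta \lesssim \alpha^2(J_0 - J^\star)$ in the hypotheses, and keeping all constants polynomial in the problem data while working on a curved manifold via normal coordinate charts is the delicate bookkeeping step.
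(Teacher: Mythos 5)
Your high-level two-phase strategy — a perturbed mirror-descent phase to reach the basin $\{J\le J_0\}$, then Corollary~\ref{cor:localGD} for the exponential tail — is exactly the paper's architecture, and the scalings you extract ($k_0\asymp (J_0-J^\star)^{-2-\epsilon}$, $\alpha\asymp (J_0-J^\star)^{1+\epsilon/2}$, $\beta\lesssim \alpha^2(J_0-J^\star)$) also agree with the paper. However, the central step of phase one is not realizable as you describe it, and the missing ingredient is not a bookkeeping refinement but the key idea.

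You propose the Lyapunov recursion
$\bar\Hh(\nu^\star,\nu_{k+1})-\bar\Hh(\nu^\star,\nu_k)\le -2\alpha\,(J(\nu_k)-J^\star)+\mathrm{err}_k$,
and say you will control the error via ``a second-order Taylor expansion of $\log\nu_k$ around each optimal atom'' together with ``uniform control of the density regularity of $\nu_k$.'' The quantity $\bar\Hh(\nu^\star,\nu_k)$ is defined through the density values of the second argument at the atom positions $\theta_i$; when $\nu_k$ is a discrete measure (which it must be for the algorithm to be implementable, and which is the regime of the theorem) this is ill-defined, and the object $\log\nu_k$ you want to Taylor-expand does not exist. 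Even if one initializes with an absolutely continuous $\nu_0$, there is no mechanism to control the density of $\nu_k$ at the (a priori unknown) points $\theta_i$ along the dynamics; a relative entropy $\Hh(\nu^\star,\nu_k)$ between the atomic $\nu^\star$ and a generic $\nu_k$ is $+\infty$. So the Lyapunov function you want to descend along is not finite in the regime of interest.

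The paper's way around this is Lemma~\ref{lem:discretemirrordecrease}: instead of comparing $\nu^\star$ to the iterate $\nu_k$, it introduces a \emph{comparison sequence} $\nu^\epsilon_k$ that starts at some smooth $\nu^\epsilon_0$ near $\nu^\star$ (so that $\Hh(\nu^\epsilon_0,\nu_0)<\infty$) and is pushed forward by exactly the same spatial maps $T^\theta_k$ as $\nu_k$. By invariance of relative entropy under simultaneous pushforward, the change in $\Hh(\nu^\epsilon_k,\nu_k)$ across a step comes only from the multiplicative mass update, and a first-order expansion of $T^r_k=1-2\alpha J'_{\nu_k}+O(\alpha^2)$ gives the mirror-descent-type decrease you were aiming for. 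The spatial drift does not enter the entropy at all; it enters only through the term $\Vert\nu^\star-\nu^\epsilon_k\Vert^*_{\BL}$, which one bounds by the triangle inequality as $\Vert\nu^\star-\nu^\epsilon_0\Vert^*_{\BL} + B\beta k'$. Finally, one optimizes over the choice of $\nu^\epsilon_0$, which is what the mirror rate function $\Qq_{\nu^\star,\nu_0}$ of Eq.~\eqref{eq:quantization} encodes, and Lemma~\ref{lem:qdensity} turns this into a concrete $O(\log(\tau)/\tau)$ bound plus the $W_\infty$ term. If you replace your direct Lyapunov recursion on $\bar\Hh(\nu^\star,\nu_k)$ by this comparison-measure construction, the rest of your plan (error scalings, choice of $k_0,\alpha,\beta$, the hand-off to Corollary~\ref{cor:localGD} and monotonicity via Lemma~\ref{lem:sufficientdescrease}) goes through as you describe.
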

We can make the following comments:
\begin{itemize}
\item[--] The non-asymptotic convergence rate does not appear explicitly in Theorem~\ref{thm:convergenceflow}, because the result is obtained by trading-off various error terms. In an the idealized setting where $\nu_0=\rho$ and $\beta=0$, a direct consequence of Lemma~\ref{lem:mirrordecrease} and Lemma~\ref{lem:qdensity} is that $J(\nu_t)-J^\star$ decreases as $O(\log(t)/t)$ for the gradient flow and in $O(\log(k)/\sqrt{k})$ for the gradient descent in general. For the specific case of the mirror retraction, we show  in Appendix~\ref{app:fasterrate} that a faster rate in $O(\log(k)/k)$ holds.
\item[--] The condition on the initialization can be achieved by taking $\nu_0$ a weighted empirical distribution of $m$ samples from $\rho$ (typically the normalized volume measure), and it is known that the rate of convergence in $W_\infty$ of such approximation is in~$\tilde O(m^{-1/d})$, see~\cite{trillos2015rate}. Unfortunately, this exponential dependence in the dimension is unavoidable when approximating densities in Wasserstein distances~\cite{weed2017sharp}. This corresponds to a quantitative version of the condition on the initialization in Theorem~\ref{th:consistency}. Also, note that $J_0-J^\star$ gets smaller as the problem becomes more difficult, in which case the overparameterization $m$ must increase, and the convergence speed slows down. In particular, the necessary condition $m\geq m^\star$ is implicitly implied by our assumptions.
\item[--] The fact that the sublevel $J_0$ from Theorem~\ref{th:gradientinequality} does not depend on the metric parameters $(\alpha,\beta)$ is crucial to prove these theorems. However, the local exponential rate of convergence in Theorem~\ref{th:convergencedescent} may be deceptively bad if $\beta/\alpha$ is extremely small. An natural fix is to start with a small ratio $\beta/\alpha$ as required by Theorem~\ref{th:convergencedescent}, and to increase this ratio at each iteration so as to improve the conditioning of $J$ near optimality. The interest of Theorem~\ref{th:convergencedescent} lies mostly in the qualitative insights it brings. In practice, we would advise to choose $W_\infty(\nu_0,\rho)$, $\alpha$ and $\beta$ via heuristics or parameter search rather than trying to derive the constants of Theorem~\ref{th:convergencedescent}, which could be deceptively conservative.
\end{itemize}

\subsection{Proof of global convergence for gradient flows}\label{sec:proofglobal}
 The proof of Theorem~\ref{thm:convergenceflow} mostly relies on the the following general lemma which applies to any type of initialization or any structure of minimizers. It gives an upper bound on the optimality gap during along gradient flows in terms of a \emph{mirror rate} function $\Qq_{\nu_0,\nu^\star}: \RR^*_+ \to \RR_+$ defined for $\nu^\star, \nu_0\in \Mm_+(\Theta)$  and $\tau>0$ as
\begin{align}\label{eq:quantization}
\Qq_{\nu^\star,\nu_0}(\tau) = 
\inf_{\nu \in \Mm_+(\Theta)} \Vert \nu^\star- \nu\Vert_{\BL}^* +\frac{1}{\tau} \Hh(\nu,\nu_0).
\end{align}
This is a continuous and decreasing function of $\tau$ that satisfies
\[
 \lim_{\tau \to \infty} \Qq_{\nu^\star,\nu_0}(\tau)  = \inf_{\spt \nu\subset \spt \nu_0} \Vert \nu^\star- \nu\Vert_{\BL}^*
\]
which is $0$ if and only if $\spt(\nu^\star)\subset \spt(\nu_0)$. When $\beta=0$, this function directly controls the rate of convergence of this mirror descent dynamics hence the name \emph{mirror rate} function.

\begin{lemma}\label{lem:mirrordecrease}
Assume ${\sf (A1-3)}$ and that $J$ admits a minimizer $\nu^\star\in \Mm_+(\Theta)$. Then for all $\nu_0\in \Mm_+(\Theta)$, denoting $B_{\nu_0}\coloneqq \sup_{J(\nu)\leq J(\nu_0)} \Vert J'_{\nu}\Vert_{\BL}$, it holds for $t\geq 0$,
\begin{align*}
J(\nu_t)-J^\star \leq \inf_{s\in [0,t]}\left( B_{\nu_0} \cdot \Qq_{\nu^\star,\nu_0}(4\alpha B_{\nu_0}s) + \beta B^2_{\nu_0} s\right).
\end{align*}
\end{lemma}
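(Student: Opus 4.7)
The plan is to adapt the classical mirror-descent analysis to the continuous-time, perturbed setting of the projected Wasserstein--Fisher--Rao flow~\eqref{eq:projectedGF}. The Fisher--Rao part $-4\alpha \nu_t J'_{\nu_t}$ will be treated as a pure mirror-descent flow with entropic mirror map (producing the $\Qq_{\nu^\star,\nu_0}$ term), and the Wasserstein part $\beta\,\mathrm{div}(\nu_t \nabla J'_{\nu_t})$ will be handled as a perturbation responsible for the additive $\beta B^2_{\nu_0} s$ error. By Lipschitzness of $J$ in $\|\cdot\|_{\BL}^*$ on the sublevel set $\{J\leq J(\nu_0)\}$ with constant $B_{\nu_0}$ (which is invariant along the flow by gradient descent), and the bound $J(\tilde\nu) - J^\star \leq B_{\nu_0}\|\tilde\nu - \nu^\star\|_{\BL}^*$, it suffices to prove that for any $\tilde\nu$ with $\Hh(\tilde\nu,\nu_0)<\infty$,
\[
J(\nu_s) - J(\tilde\nu) \leq \frac{\Hh(\tilde\nu,\nu_0)}{4\alpha s} + \beta B^2_{\nu_0}\, s,
\]
after which one takes the infimum over $\tilde\nu$ (yielding $B_{\nu_0}\Qq_{\nu^\star,\nu_0}(4\alpha B_{\nu_0} s)$) and then the infimum over $s \in [0,t]$ using monotonicity of $J(\nu_t)$.

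The key device is a time-dependent comparison measure. I would introduce $\tilde\nu_u := (T_u)_\# \tilde\nu$, where $T_u$ is the flow map of the (non-autonomous) velocity field $-\beta\nabla J'_{\nu_u}$ on $\Theta$, so that $\tilde\nu_u$ is a weak solution of $\partial_u \tilde\nu_u = \beta\,\mathrm{div}(\tilde\nu_u \nabla J'_{\nu_u})$ with $\tilde\nu_0 = \tilde\nu$ and mass preserved. A direct computation of $\frac{d}{du}\Hh(\tilde\nu_u,\nu_u)$, working with densities and integrating by parts the Wasserstein terms, shows that the Wasserstein contributions from $\partial_u\nu_u$ and $\partial_u \tilde\nu_u$ cancel exactly (they both equal $\pm\beta \int \nabla\log(\tilde\nu_u/\nu_u)\cdot \nabla J'_{\nu_u}\,d\tilde\nu_u$), leaving only the Fisher--Rao piece:
\[
\frac{d}{du}\Hh(\tilde\nu_u,\nu_u) = -4\alpha\int J'_{\nu_u}\,d(\nu_u - \tilde\nu_u) \leq -4\alpha\bigl(J(\nu_u) - J(\tilde\nu_u)\bigr),
\]
where the inequality uses convexity of $J$ (\textsf{A3}). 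Integrating and using $\Hh\geq 0$ yields $4\alpha \int_0^s (J(\nu_u) - J(\tilde\nu_u))\,du \leq \Hh(\tilde\nu,\nu_0)$.

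It remains to replace $\tilde\nu_u$ by $\tilde\nu$. Since $T_u$ differs from the identity by at most $\beta B_{\nu_0} u$ in sup norm, one gets $\|\tilde\nu_u - \tilde\nu\|_{\BL}^* \leq \beta B_{\nu_0} u\,\tilde\nu(\Theta)$ and hence (again by Lipschitzness of $J$) $|J(\tilde\nu_u) - J(\tilde\nu)| \leq \beta B^2_{\nu_0}\,\tilde\nu(\Theta)\,u$; equivalently via a direct chain rule one has $\frac{d}{du}J(\tilde\nu_u) = -\beta\int \nabla J'_{\tilde\nu_u}\cdot\nabla J'_{\nu_u}\,d\tilde\nu_u$, bounded in absolute value by $\beta B^2_{\nu_0}\,\tilde\nu(\Theta)$. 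Inserting this estimate, using the descent property of the gradient flow to get the monotonicity $s\, J(\nu_s) \leq \int_0^s J(\nu_u)\,du$, and combining with $J(\tilde\nu)-J^\star \leq B_{\nu_0}\|\tilde\nu-\nu^\star\|_{\BL}^*$ gives the target inequality (with the mass factor $\tilde\nu(\Theta)$ controlled by $\nu^\star(\Theta) + \|\tilde\nu-\nu^\star\|_{\BL}^*$, the BL piece being absorbed into the coefficient of the first term and the remainder into $B_{\nu_0}^2$).

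The main obstacle I anticipate is the exact cancellation in $\frac{d}{du}\Hh(\tilde\nu_u,\nu_u)$, which requires a careful bookkeeping with the two simultaneously evolving measures and integration by parts on $\Theta$ (closed manifold, so no boundary terms), together with ensuring that the sublevel-set-based bound $B_{\nu_0}$ remains valid along $(\tilde\nu_u)$; this is addressed by restricting attention to $s$ small enough that $\tilde\nu_u$ does not escape the sublevel set, which is automatic once the stated inequality is established and the infimum over $s$ is taken.
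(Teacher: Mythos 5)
Your construction of the time-dependent comparison measure $\tilde\nu_u=(T_u)_\#\tilde\nu$ driven by the spatial velocity field $-\beta\nabla J'_{\nu_u}$, and the entropy computation showing $\frac{d}{du}\Hh(\tilde\nu_u,\nu_u) = -4\alpha\int J'_{\nu_u}\,d(\nu_u-\tilde\nu_u)$ (so that the Wasserstein pieces cancel and only the Fisher--Rao part remains), is exactly the paper's device. The entropy-derivative identity is correct, and can be seen cleanly via invariance of relative entropy under the common pushforward by $T_u$, as the paper does.

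However, there is a genuine gap in your final step. You apply convexity directly against $\tilde\nu_u$ to get $\frac{d}{du}\Hh(\tilde\nu_u,\nu_u)\le -4\alpha(J(\nu_u)-J(\tilde\nu_u))$, and then you must replace $J(\tilde\nu_u)$ by $J(\tilde\nu)$. Both of your proposed routes for that replacement --- the BL-Lipschitzness of $J$, and the chain rule $\frac{d}{du}J(\tilde\nu_u)=-\beta\int\nabla J'_{\tilde\nu_u}\cdot\nabla J'_{\nu_u}\,d\tilde\nu_u$ --- require $\|J'_{\tilde\nu_u}\|_{\BL}\le B_{\nu_0}$. But $B_{\nu_0}$ is defined as a supremum over the sublevel set $\{J\le J(\nu_0)\}$, and an arbitrary $\tilde\nu$ with $\Hh(\tilde\nu,\nu_0)<\infty$ need not lie in that sublevel set (nor need $\tilde\nu_u$). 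Your closing remark, that one can ``restrict to $s$ small enough that $\tilde\nu_u$ does not escape the sublevel set,'' does not resolve this: the curve starts at $\tilde\nu_0=\tilde\nu$, which may already have $J(\tilde\nu)>J(\nu_0)$, so no restriction on $s$ helps, and the inequality must be established for every $\tilde\nu$ before the infimum defining $\Qq_{\nu^\star,\nu_0}$ can be taken. The paper sidesteps this entirely by splitting the convexity step through $\nu^\star$ rather than $\tilde\nu_u$: it writes $\int J'_{\nu_u}\,d(\tilde\nu_u-\nu_u)=\int J'_{\nu_u}\,d(\nu^\star-\nu_u)+\int J'_{\nu_u}\,d(\tilde\nu_u-\nu^\star)\le -(J(\nu_u)-J^\star)+\|J'_{\nu_u}\|_{\BL}\,\|\nu^\star-\tilde\nu_u\|_{\BL}^*$, so the only BL-norm of a first variation that ever appears is $\|J'_{\nu_u}\|_{\BL}$ with $\nu_u$ on the descending trajectory, safely inside the sublevel set, and the remaining term $\|\nu^\star-\tilde\nu_u\|_{\BL}^*$ is controlled by the triangle inequality plus the bound $\|\tilde\nu-\tilde\nu_u\|_{\BL}^*\lesssim\beta B_{\nu_0}u$. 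Replace your convexity step with this decomposition and the rest of your argument goes through.
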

A direct consequence of this lemma is that $\lim_{t\to \infty} J(\nu_t)-J^\star $ is guaranteed to be small as $\beta$ gets smaller and as  $\spt \nu_0$ gets closer to $\spt \nu^\star$. In Appendix~\ref{app:mirror} we give an upper bound on $\Qq$ for the situation of interest here, leading to explicit convergence rates when combined with Lemma~\ref{lem:mirrordecrease}.
\begin{proof}
Let $\nu^\epsilon_0 \in \Mm_+(\Theta)$ be a measure to be specified later that satisfies $\Hh(\nu^\epsilon_0, \nu_0)<+\infty$, and let $\nu^\epsilon_t$ satisfy $\partial_t \nu^\epsilon_t = \div(\beta \nu^\epsilon_t\nabla J'_{\nu_t})$ for $t\geq 0$ weakly (this is a continuity equation with a smooth velocity field which admits a unique weak solution). Differentiating the relative entropy with respect to its second argument and using the invariance of the relative entropy under diffeomorphisms, it holds, for $t\geq 0$,
\begin{align*}
\frac1{4\alpha} \frac{d}{dt} \Hh(\nu^\epsilon_t, \nu_t) &= \int_{\Theta} J'_{\nu_t} \d (\nu^\epsilon_t-\nu_t) \\
& = \int_{\Theta} J'_{\nu_t} \d(\nu^\star -\nu_t) + \int_{\Theta} J'_{\nu_t} \d(\nu^\epsilon_t-\nu^\star) \\
& \leq -(J(\nu_t)-J^\star) + \Vert J'_{\nu_t}\Vert_\BL \cdot \Vert \nu^\star-\nu^\epsilon_t\Vert_\BL^*
\end{align*}
where the first term comes from the convexity of $J$ and the second from the definition of $\Vert \cdot\Vert_{\BL}^*$. After integrating in time and rearranging the terms we get
\begin{align*}
\left(\frac1t\int_0^t J(\nu_s)\d s\right)-J^\star &\leq \frac1{4\alpha t} \left( \Hh(\nu^\epsilon_0,\nu_0) - \Hh(\nu^\epsilon_t,\nu_t) \right) + \frac{B_{\nu_0}}{t} \int_0^t \Vert \nu^\star-\nu^\epsilon_s\Vert_{\BL}^* \d s.
\end{align*}
For the last integral term, we use the triangular inequality
\[
 \frac{B_{\nu_0}}{t}  \int_0^t \Vert \nu^\star-\nu^\epsilon_s\Vert_{\BL}^* \d s \leq B_{\nu_0}\Vert \nu^\star - \nu^\epsilon_0\Vert_{\BL}^* + \frac{B_{\nu_0}}{t} \int_0^t \Vert \nu_0^\epsilon -\nu^\epsilon_s\Vert_{\BL}^*\d s\leq B_{\nu_0} \Vert \nu^\star - \nu^\epsilon_0\Vert_{\BL}^* +B_{\nu_0}^2\beta t
\]
where the last term is obtained by bounding the integrated flow of the velocity field $(\nabla J'_{\nu_t})_{t\geq 0}$. Since $\Hh(\nu_\epsilon,\nu_t)\geq 0$ and $J(\nu_s)$ is decreasing, it follows
\[
J(\nu_t)-J^\star  \leq \inf_{\nu^\epsilon\in \Mm_+(\Theta)} \left( \frac1{4\alpha t} \Hh(\nu^\epsilon,\nu_0) + B_{\nu_0}\Vert  \nu^\star -\nu^\epsilon\Vert_{\BL}^*\right) + B_{\nu_0}^2 \beta t.\qedhere
\]
\end{proof}

\begin{proof}[Proof of Theorem~\ref{thm:convergenceflow} (gradient flow)]
By Lemma~\ref{lem:qdensity}, we have for $\tau\geq L=\Lip(\log \rho)$, by writing $\bar \Hh \coloneqq \bar \Hh(\nu^\star,\rho\vol)$,
\[
\Qq_{\nu^\star,\hat \nu_0}(\tau) \leq \frac{\bar \Hh+d\cdot \nu^\star(\Theta)\cdot (\log(\tau)+C_\Theta)}\tau + \nu^\star(\Theta)\cdot W_\infty(\nu_0,\rho\vol).
\]
Combining this bound with Lemma~\ref{lem:mirrordecrease}, we get that for $t\geq L/(4\alpha B_{\nu_0})$,
\[
J(\nu_t)-J^\star \leq \frac{\bar \Hh+d\cdot\nu^\star(\Theta)\cdot(\log(4\alpha B_{\nu_0} t)+C_\Theta)}{4\alpha B_{\nu_0} t} + B_{\nu_0}\cdot\nu^\star(\Theta)\cdot W_\infty(\nu_0,\rho\vol)+\beta B^2_{\nu_0} t.
\]
In particular, for $t=(\alpha\beta)^{-\frac12}$, we get 
\begin{equation}\label{eq:lastbeforelast}
J(\nu_t)-J^\star \leq \sqrt{\frac{\beta}{\alpha}}\left(\bar \Hh + d\cdot \nu^\star(\Theta) (\log(4B_{\nu_0}\sqrt{\alpha/\beta})+C_\Theta) + B^2_{\nu_0}\right) + B_{\nu_0}\nu^\star(\Theta)\cdot W_\infty(\nu_0,\rho\vol).
\end{equation}
Since this is valid only when $t\geq L/(4\alpha B_{\nu_0})$, we require $(\alpha\beta)^{-\frac12}\geq L/(4\alpha B_{\nu_0})$ which leads to the first condition on $\beta/\alpha$. Now, we want the right-hand side of~\eqref{eq:lastbeforelast} to be smaller than $\Delta_0 \coloneqq J_0 - J^\star$ so that we can conclude with Corollary~\ref{cor:localGF}. To this end, we require, on the one hand $W_\infty(\nu_0,\rho\vol) \leq \Delta_0/(2B_{\nu_0}\nu^\star(\Theta))$. On the other hand, we use the bound $\log(u)\leq C_\epsilon u^\epsilon$ for $\epsilon\in {]0,1/2]}$, require $4B_{\nu_0}\sqrt{\alpha/\beta}\geq 1$ and obtain the condition
\[
(4B_{\nu_0})^\epsilon(\beta/\alpha)^{(1-\epsilon)/2}\left( \bar \Hh + d\cdot \nu^\star(\Theta)(C_\Theta+C_\epsilon) +B^2_{\nu_0}\right)\leq \frac12\Delta_0.
\]
This leads to the second condition on $\beta/\alpha$ is the theorem.
\end{proof}

\subsection{Fully non-convex gradient descent}\label{sec:fullynonconvex}
The results in the previous section require to set  $\beta/\alpha$ at a small initial value. This might appear undesirable because the asymptotic convergence result of Theorem~\ref{th:consistency} holds irrespective of the choice of $\beta/\alpha$. Also, in practice, this condition does not seem required, at least in the examples that we have considered (see Section~\ref{sec:numerics}).  While the proof technique from Section~\ref{sec:proofglobal} fails without controlling $\beta/\alpha$, the question of wether it is possible to obtain convergence rates for any ratio $\beta/\alpha$ is a natural one.

For such a result, the key challenge is to obtain a convergence rate for the gradient flow dynamics~\eqref{eq:projectedGF} when initialized with a positive density, without conditions on $(\alpha,\beta)$. While we were not able to prove such a result, in order to point out at the theoretical difficulty, we show in Appendix~\ref{sec:boundeddensity} with a proof technique inspired by~\cite{wei2019regularization}, that a convergence rate in objective value in $O(1/\sqrt{\eta t})$ holds as long as the density $\nu_t$ is lower bounded by some $\eta>0$ (at least on a certain subset of $\Theta$).

\begin{proposition}
Under {\sf (A1-3)}, for any $J_{\max}\geq J^\star$, there exists $C>0$ such that for any $\eta,t>0$ and $\nu_0\in \Mm_+(\Theta)$ satisfying $J(\nu_0)\leq J_{\max}$, if the projected gradient flow~\eqref{eq:projectedGF} satisfies for $0\leq s \leq t$,
\[
\nu_s\vert_{S_t} \geq \eta \vol\vert_{S_t}
\]
where $S_t = \{\theta \in \Theta\;;\; J'_{\nu_s}(\theta)\leq 0 \text{ for some } s\in {[0,t]}\}$, then 
$
J(\nu_t)-J^\star\leq \frac{C}{\sqrt{\alpha \eta t}}.
$
\end{proposition}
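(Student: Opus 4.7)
The plan is to perform a mirror-descent-style analysis in the measure space $\Mm_+(\Theta)$, following the strategy of~\cite{wei2019regularization}. By the convexity of $J$ given by Assumption~{\sf (A3)}, for any absolutely continuous comparator $\tilde\nu\in \Mm_+(\Theta)$ one has
\[
J(\nu_s) - J(\tilde\nu) \leq \int J'_{\nu_s}\, d(\nu_s - \tilde\nu)\quad\text{for all }s\geq 0.
\]
The central computation is the entropy dissipation identity: differentiating $\Hh(\tilde\nu, \nu_s)$ along the projected flow~\eqref{eq:projectedGF} and integrating by parts on the closed manifold $\Theta$ yields
\[
\frac{1}{4\alpha}\frac{d}{ds}\Hh(\tilde\nu, \nu_s) = -\int J'_{\nu_s}\, d(\nu_s - \tilde\nu) + \frac{\beta}{4\alpha}\int \nabla(\tilde\nu/\nu_s)\cdot \nabla J'_{\nu_s}\, d\nu_s,
\]
where $\tilde\nu/\nu_s$ denotes the Radon--Nikodym derivative. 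Combining with the convexity inequality, integrating in $s\in[0,t]$, and using the a priori bound $\|\nabla J'_{\nu_s}\|_\infty\le C$ on sublevel sets (available under~{\sf (A1)} and the descent property) to control the Wasserstein correction term $\mathcal{E}_\beta(t)$, one obtains a mirror-descent-type inequality
\[
\int_0^t \bigl(J(\nu_s) - J(\tilde\nu)\bigr)\, ds \leq \frac{\Hh(\tilde\nu,\nu_0)}{4\alpha} + \mathcal{E}_\beta(t).
\]

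Exploiting the descent property $J(\nu_t)\leq J(\nu_s)$ for $s\leq t$ converts this time-averaged bound into a pointwise one:
\[
J(\nu_t) - J(\tilde\nu) \leq \frac{\Hh(\tilde\nu,\nu_0)}{4\alpha t} + \frac{\mathcal{E}_\beta(t)}{t}.
\]
Next, I would take $\tilde\nu$ to be a smoothing of a minimizer $\nu^\star$ at a scale $\delta>0$, supported inside $S_t$ and with density bounded by $O(\delta^{-d})$. The hypothesis $\nu_s\vert_{S_t}\geq \eta\vol\vert_{S_t}$ applied at $s=0$ gives the entropy estimate
\[
\Hh(\tilde\nu,\nu_0) \leq \tilde\nu(\Theta)\log\!\Bigl(\tfrac{C}{\eta\,\delta^d}\Bigr) + O(1),
\]
while the Lipschitz regularity of $J$ in the bounded-Lipschitz norm gives an approximation error $J(\tilde\nu)-J^\star = O(\delta)$. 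Balancing $\delta$ in the trade-off between approximation error, entropy growth, and the Wasserstein correction then yields the claimed $O(1/\sqrt{\alpha\eta t})$ bound.

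The two principal obstacles are the following. First, the comparator $\tilde\nu$ must have its support contained in $S_t$, which is a trajectory-dependent set only guaranteed to collect the locations where $J'_{\nu_s}\leq 0$ has occurred by time $t$; in the absence of the sparsity hypothesis~{\sf (A4)} one needs to argue that a non-trivial portion of the optimal mass can be placed inside $S_t$, exploiting the PDE structure to the effect that $\nu_s$ only fails to grow where $J'_{\nu_s}>0$, so that $S_t$ captures the ``productive'' locations. Second, the Wasserstein correction term involves $\int \nabla(\tilde\nu/\nu_s)\cdot \nabla J'_{\nu_s}\, d\nu_s$, which cannot be controlled without information on $\nabla\log\nu_s$; the standard remedy is to shift the derivative onto $\tilde\nu$ by integration by parts at the expense of a factor $1/\delta$ from the smoothing, and it is the delicate interplay between this cost, the entropy bound, and the $O(\delta)$ approximation error that determines the precise form of the rate.
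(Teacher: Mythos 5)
Your approach is genuinely different from the paper's. The paper gives a direct, comparator-free ODE argument: starting from the convexity inequality $J(\nu_t)-J^\star\leq \int J'_{\nu_t}\d\nu_t - \int J'_{\nu_t}\d\nu^\star$, it bounds the first term via Jensen's inequality and the identity $\frac{\d}{\d t}J(\nu_t) = -\Vert g_{\nu_t}\Vert^2_{L^2(\nu_t)}$, and bounds the second term by $\nu^\star(\Theta)\cdot v_t$ where $v_t\coloneqq \min\{0,\min_\theta J'_{\nu_t}(\theta)\}$. The density hypothesis then enters through the following mechanism: the set $\Theta_t = \{J'_{\nu_t}\leq v_t/2\}$ is contained in $S_t$, has volume $\gtrsim |v_t|$ by the Lipschitz regularity of $J'_{\nu_t}$, and therefore carries $\nu_t$-mass $\gtrsim \eta\,|v_t|$; this gives $-\frac{\d}{\d t}J(\nu_t)\gtrsim \alpha\eta\,|v_t|^3$, and combining all the pieces yields a differential inequality $h(t)\lesssim (\alpha\eta)^{-1/3}(-h'(t))^{1/3}$ for $h(t)=J(\nu_t)-J^\star$, which integrates to $h(t)\lesssim 1/\sqrt{\alpha\eta t}$. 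Nowhere does this require constructing or controlling a comparator measure, and the Wasserstein part of the flow is simply discarded.

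Your mirror-descent strategy has two genuine gaps that the paper's argument is specifically engineered to avoid. First, your Wasserstein correction term $\frac{\beta}{4\alpha}\int\nabla(\d\tilde\nu/\d\nu_s)\cdot\nabla J'_{\nu_s}\,\d\nu_s$ involves $\nabla\log\nu_s$ through the Radon--Nikodym quotient, and this is not controlled under the hypotheses: the density lower bound gives no upper bound on $\nu_s$ nor any gradient bound. You cannot ``shift the derivative onto $\tilde\nu$'' cleanly either, because after the shift a $\nabla\nu_s$ term survives. The paper faces exactly this problem in its Lemma~\ref{lem:mirrordecrease}, and resolves it there by using a \emph{moving} comparator $\nu^\epsilon_s$ transported by the same velocity field $-\beta\nabla J'_{\nu_s}$, which kills the Wasserstein correction at the price of a drift error $\beta B^2 t$; a static comparator as in your proposal does not have this option. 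Second, you need a comparator with $\spt\tilde\nu\subset S_t$ that also nearly minimizes $J$, but there is no guarantee under {\sf (A1--3)} that $\spt\nu^\star$ --- or any near-minimizer's support --- is contained in the trajectory-dependent set $S_t$; the heuristic that ``$S_t$ captures the productive locations'' would need a substantive argument (e.g.\ that optimal support points eventually develop negative $J'$), and none is available in the generality of the statement (which deliberately drops {\sf (A4--5)}). A further warning sign is the rate: if your argument closed, balancing $\delta$ against the entropy term $\log(\eta^{-1}\delta^{-d})/(\alpha t)$ would yield $\tilde O\bigl(1/(\alpha t)\bigr)$ with only a $\log(1/\eta)$ dependence on the density floor --- strictly stronger than the claimed $1/\sqrt{\alpha\eta t}$, which suggests the hypotheses do not support this route.
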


Unfortunately, this result is not sufficient to obtain a convergence rate because the lower bound on the density may decrease too fast. When this happens, the gradient flow may stagnate an \emph{a priori} unbounded time in neighborhoods of saddle points, although it is guaranteed to eventually escape by Lemma~\ref{lem:escape}. Note that the result above does not requires $\Ff$ to be finite dimensional nor $\lambda=0$ while this would be needed for a proof based on the positive definiteness of the tangent kernel~\cite{jacot2018neural}.

\section{Numerical experiments}\label{sec:numerics}
All experiments can be reproduced with the Julia code available online\footnote{\url{https://github.com/lchizat/2019-sparse-optim-measures}}. Our goal here is not to demonstrate the superiority of Algorithm~\ref{alg:gradientbased} over other algorithms, but rather to illustrate the insights obtained by the analysis. We consider the following problems introduced in Section~\ref{sec:applications} :
\begin{itemize}
\item[--] (Sparse deconvolution) We consider the Dirichlet low-pass filter of order $n_f\in \NN_*$ on the $d$-torus with values in $L^2(\TT^d)$ i.e.\  $\phi(\theta): x \mapsto \sum_{k=-n_f}^{n_f}\exp(k\sqrt{-1}(x-\theta))$ when $d=1$. We use the square-loss and solve problem~\eqref{eq:objective} with conic particle gradient descent (Algorithm~\ref{alg:gradientbased}) with the ``mirror retraction'' from Section~\ref{sec:gradientdescent}. 
\item[--] (Two-layer neural net) We consider  the function $\phi(w): x \mapsto \max\left\{\sum_{j=1}^{d} x_j w_j\cdot \vert w_j\vert, 0\right\}$ which is $2$-homogeneous on $\RR^{d+1}$ with $d+1=20$. We use the square loss and solve problem~\eqref{eq:objective} with stochastic gradient descent with a small fixed step-size for an input data distribution uniform on the sphere $\SS^d$. This corresponds to a stochastic version of Algorithm~\ref{alg:gradientbased} with the ``induced retraction'' from Section~\ref{sec:gradientdescent}. For our purposes, the advantage of this architecture over classical ReLU neural networks (as presented in Section~\ref{sec:applications}) is that here $\phi$ is differentiable on $\SS^d$ (see, e.g.~\cite[Lem. D.5]{chizat2018global}).
\end{itemize}
We focus in both cases on the ``teacher-student'' setting without noise with the square loss, because it guarantees that even the unregularized problem ($\lambda=0$) has sparse solutions, in spite of  $\Ff$ being infinite dimensional. We thus have $R(f) = \frac12 \Vert f -f^\star\Vert^2_{\Ff}$ where $f^\star = \sum_{i=1}^{m_0} r_i^2 \phi(\theta_i)$ and $m_0\in \NN^*$ is the number of atoms for the teacher.

\begin{figure}
\centering
\begin{subfigure}{0.33\linewidth}
\centering
\includegraphics[scale=0.45]{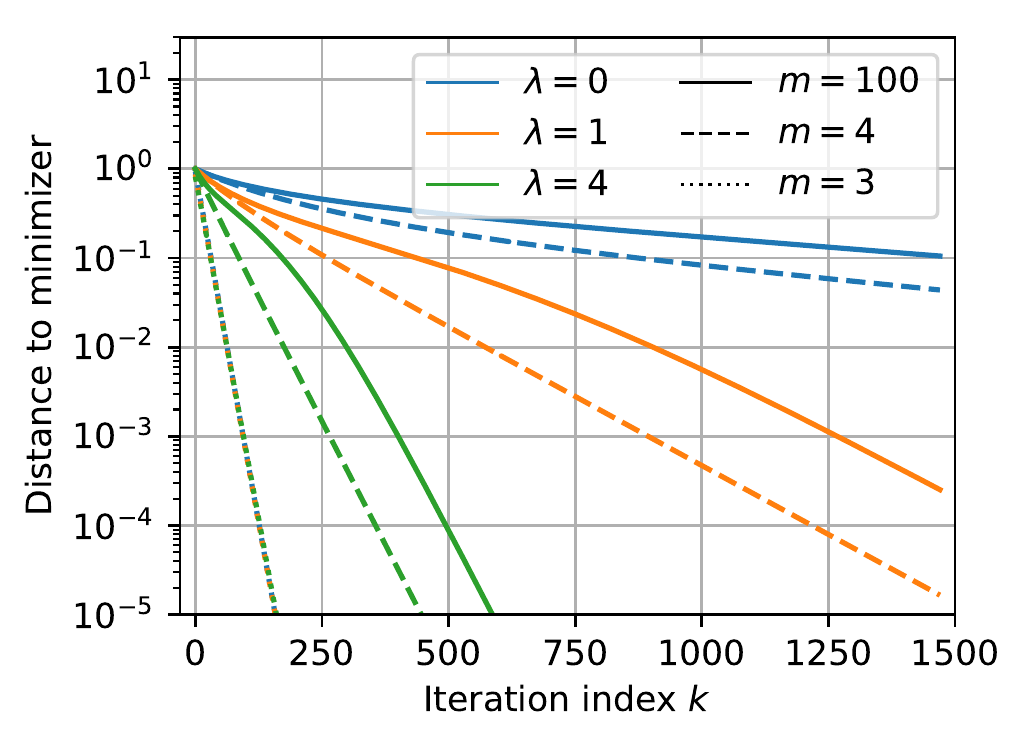}
\caption{Deconvolution 2D (distance)}
\end{subfigure}%
\begin{subfigure}{0.33\linewidth}
\centering
\includegraphics[scale=0.45]{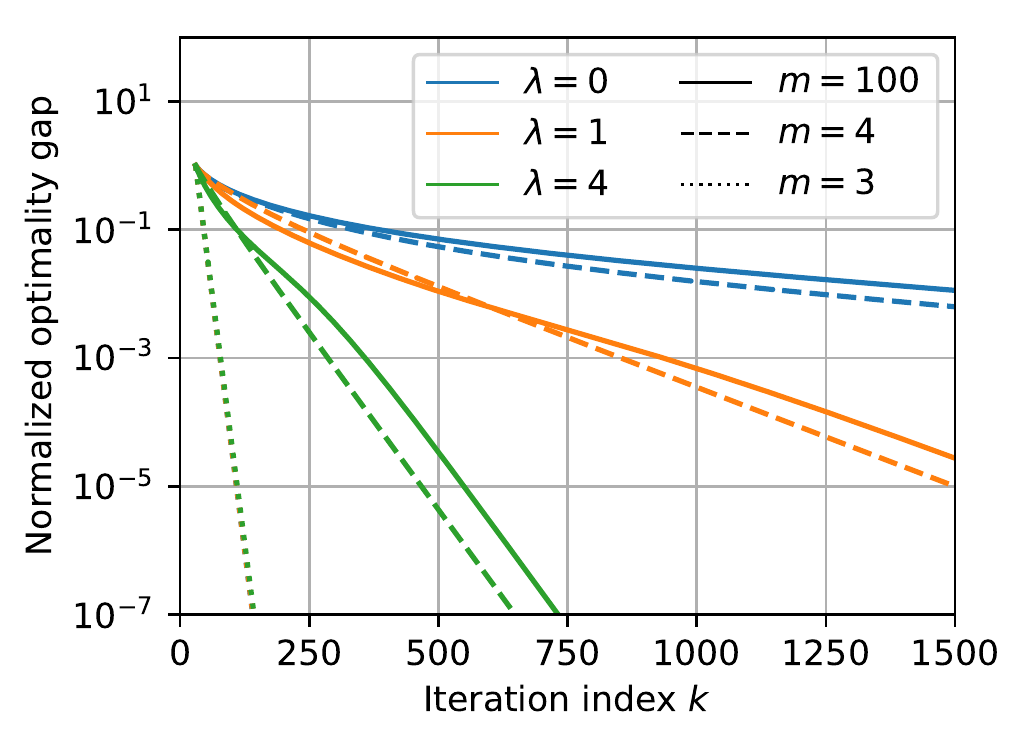}
\caption{Deconvolution 2D (loss)}
\end{subfigure}%
\begin{subfigure}{0.33\linewidth}
\centering
\includegraphics[scale=0.45]{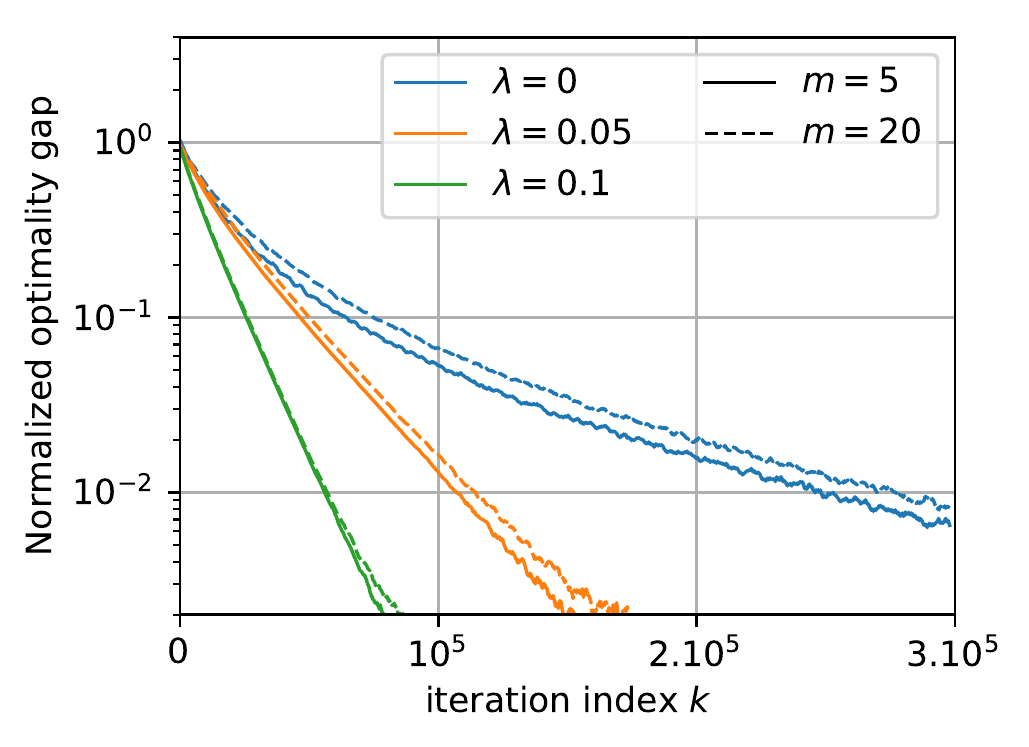}
\caption{Two-layer neural net (loss)}
\end{subfigure}%
\caption{Convergence plots (normalized to $1$ after a burning period of $30$ iterations). The initialization is close to the minimizer for deconvolution, and is random for neural nets.}
\label{fig:rates}
\end{figure}

\paragraph{Local convergence rate.} We observe on Figure~\ref{fig:rates} the effect of the regularization parameter $\lambda$ and of the over-parameterization parameter $m$ on the local convergence rates (in $\widehat W_2$ distance -- approximated by mapping each particle to its final position/mass -- or in optimality gap).  In accordance with the expansion of Proposition~\ref{prop:expansions}, we observe exponential convergence whenever $\lambda>0$, with a rate that improves as $\lambda$ increases. For sparse deconvolution, we observe fast exponential convergence when $m=m_0=3$ which is explained by only the first term in the local expansion~\eqref{eq:expansion} being non-zero. By adding just a single particle, the second term comes into play and the behavior is qualitatively similar than with $20$ particles. For Figure~\ref{fig:rates}-(c), the initialization is random and $m_0=5$. Here the behavior for $m=m_0$ follows that of $m>m_0$ which suggests that the first term in the local expansion of Eq.~\eqref{eq:expansion} dominates.

\paragraph{Global convergence.} We observe on Figure~\ref{fig:success} the effect on the success/failure of optimization of the two main parameters that appear in Theorem~\ref{thm:convergenceflow}:  the over-parameterization parameter $m$ (used to decrease the $W_\infty$ criterion) and the ratio of the vertical/spatial step-sizes $\beta/\alpha$. In both~(a) and (b) we have $m_0=5$ and $\lambda>0$, and the final loss is averaged over $5$ random experiments. Without surprise, minimizers cannot be reached when $m$ is too small. It is also observed that increasing $m$ increases the chances of success even when $m\geq m_0$. In contrast, these experiments do not reveal a clear role for $\beta/\alpha$, beyond a change in the convergence speed (see Section~\ref{sec:fullynonconvex}).

\begin{figure}
\centering
\begin{subfigure}{0.3\linewidth}
\centering
\includegraphics[scale=0.5]{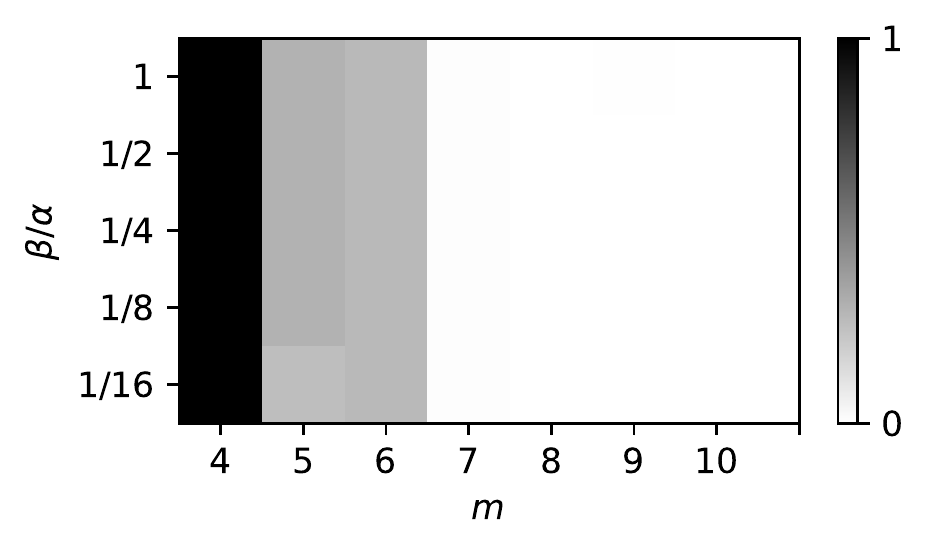}
\caption{Deconvolution 1D}
\end{subfigure}%
\hspace{1cm}
\begin{subfigure}{0.3\linewidth}
\centering
\includegraphics[scale=0.5]{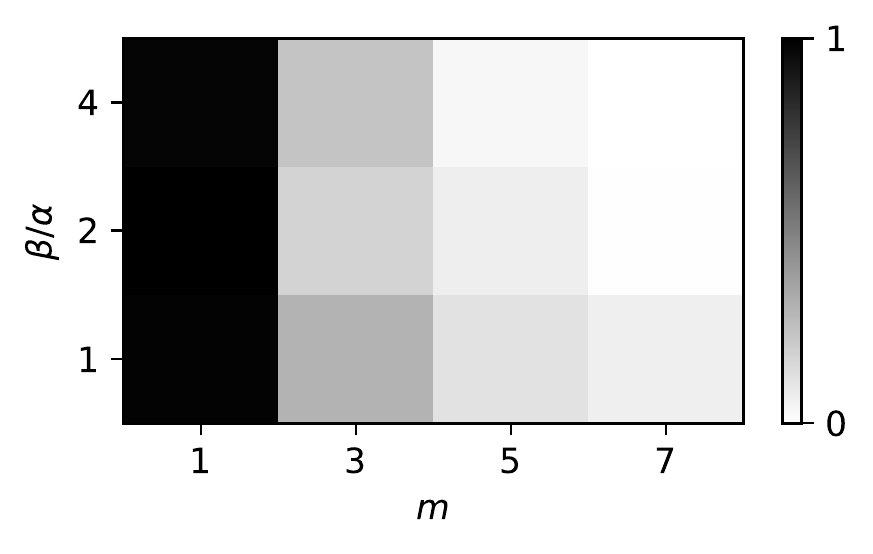}
\caption{Two-layer neural net}
\end{subfigure}%
\caption{Effect of $m$ and $\beta/\alpha$ on the excess loss after a fixed large number of iterations ($\lambda$ fixed). The shades go from white (lowest objective achieved) to black (highest).}
\label{fig:success}
\end{figure}

\paragraph{Comparison of vertical geometries.} Finally, we compare on Figure~\ref{fig:convex} the behavior of mirror descent against that of Euclidean descent (here integrated with ISTA algorithm~\cite{daubechies2004iterative}). This corresponds respectively to $h(r)=r^2$ and $h(r)=r$ in Eq.~\ref{eq:objectivediscrete} and $\beta=0$. We consider the problem of recovering a single spike ($m_0=1$) for 1D and 2D sparse deconvolution, starting from the uniform measure on $\Theta$ densely sampled on a grid ($m=100$). We report  the behavior in early stages of optimization, before the effect of the discretization comes into play. We observe that mirror descent outperforms Euclidean descent and enjoys a convergence rate of order $\sim 1/k$ around iteration number $k=100$. This is in accordance with the result of Appendix~\ref{app:fasterrate}, where we show a convergence rate for mirror descent with continuous densities in $O(\log(k)/k)$, independent of the dimension. The difference in behavior is illustrated on Figure~\ref{fig:convex}-(c) where we plot $\nu_{1000}$ (in the setting of panel (a)).

\begin{figure}
\centering
\begin{subfigure}{0.33\linewidth}
\centering
\includegraphics[scale=0.44]{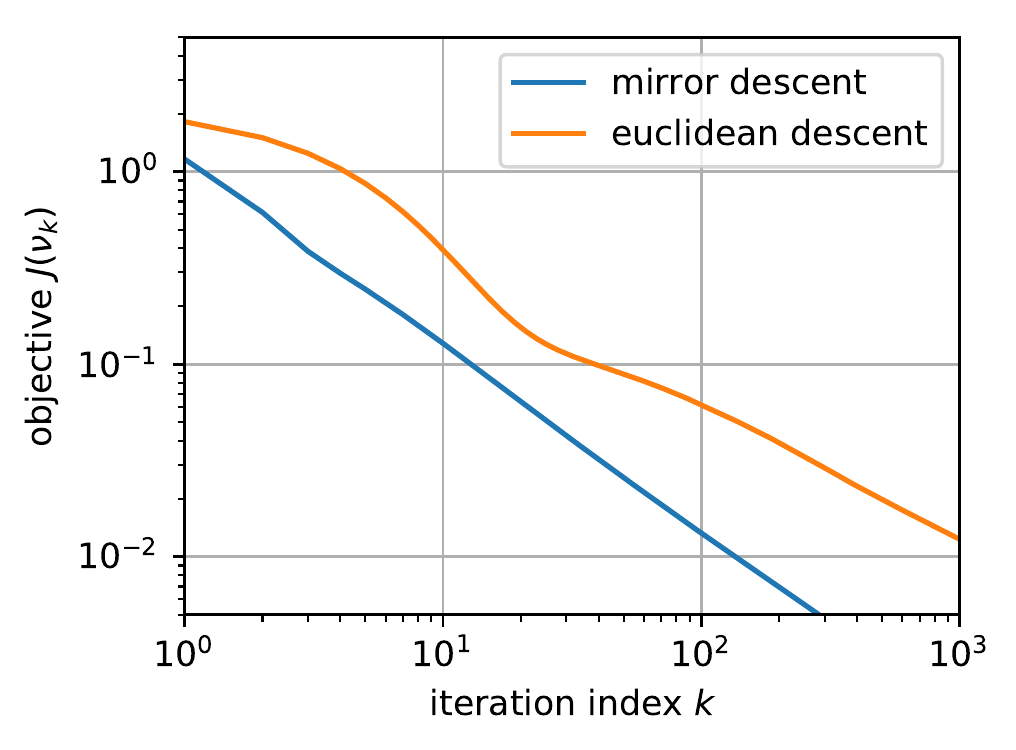}
\caption{Deconvolution 1D}
\end{subfigure}%
\begin{subfigure}{0.33\linewidth}
\centering
\includegraphics[scale=0.44]{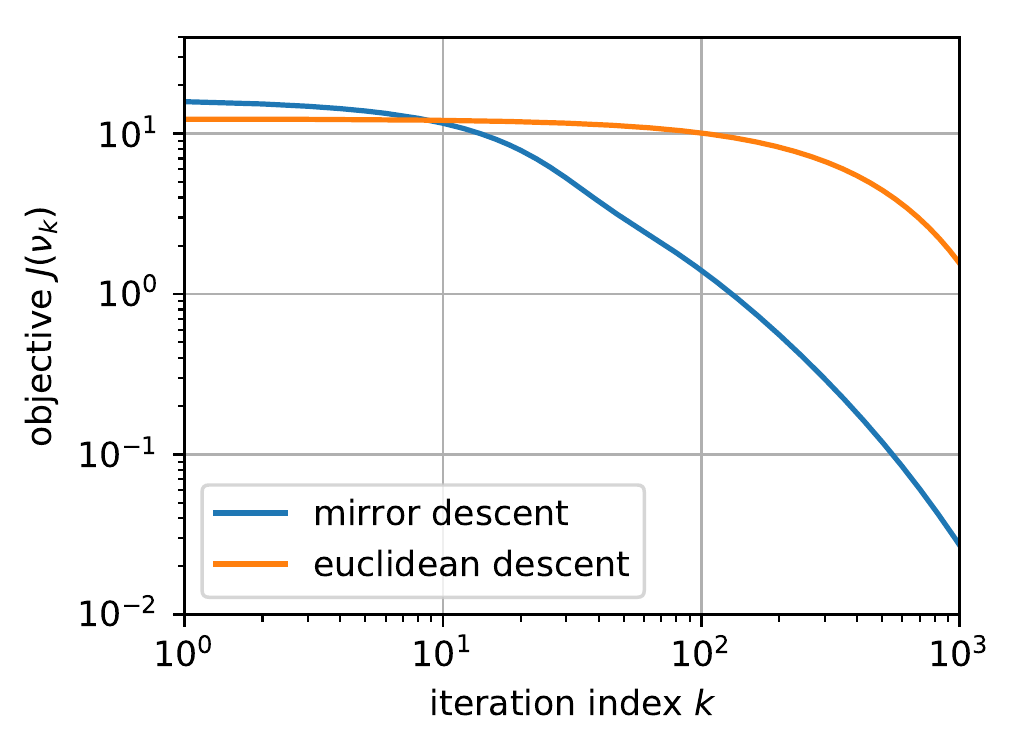}
\caption{Deconvolution 2D}
\end{subfigure}%
\begin{subfigure}{0.33\linewidth}
\centering
\includegraphics[scale=0.45]{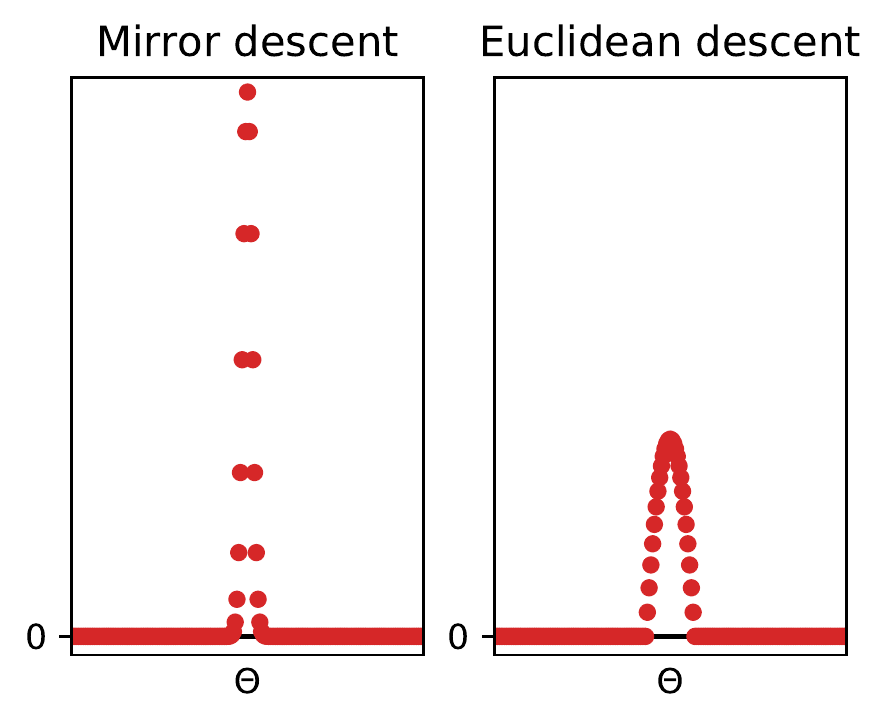}
\caption{Some iterate $\nu_k$}
\end{subfigure}%
\caption{Comparison of mirror ($h(r)=r^2$) and Euclidean ($h(r)=r$) dynamics when $\beta=0$. }
\label{fig:convex}
\end{figure}

\section{Conclusion}
In this paper, we have studied particle gradient descent for sparse convex optimization on measures and obtained complexity guarantees under non-degeneracy assumptions. One central idea underlying our analysis is to directly study the iterates in Wasserstein space. We believe that this approach, at the crossroads between analysis and optimization, may lead to other insights for over-parameterized and non-convex gradient descent.

An avenue for future research is to study the unregularized case. This may require to exploit finer properties of the problem than mere smoothness and could improve our understanding of the implicit bias of over-parameterized gradient descent. Another important question is to find theoretical explanations for the favorable behavior observed in high dimensions for two layer neural networks optimization.

\subsubsection*{Acknowledgments} The author thanks Francis Bach for fruitful discussions related to this work and the anonymous referees for their thorough reading and suggestions.

\bibliography{LC}
\bibliographystyle{plain}

\newpage
\appendix

\section{Dealing with signed measures}\label{app:signed}
Let us show that problems over signed measures with total variation regularization are covered by problem~\eqref{eq:objective}, after a suitable reformulation. Consider a function $\tilde \phi:\tilde \Theta\to \Ff$ and the functional on signed measures $\tilde J: \Mm(\tilde \Theta)\to \RR$ defined as
\begin{equation}\label{eq:signedobjective}
\tilde J(\mu) = R\left(\int \tilde \phi \d\mu\right) + \lambda \vert \mu \vert(\tilde \Theta).
\end{equation}
where $\vert \mu \vert(\tilde \Theta)$ is the total variation of $\mu$. This is a continuous version of the LASSO problem, known as BLASSO~\cite{de2012exact}. Define $\Theta$ as the disjoint union of two copies $\tilde \Theta_+$ and $\tilde \Theta_-$  of $\tilde \Theta$ and define the symmetrized function $\phi:\Theta\to \Ff$ as 
\[
\phi(\theta) = \begin{cases}
+\tilde \phi(\theta) &\text{if $\theta \in \tilde \Theta_+$}\\
-\tilde \phi(\theta) &\text{if $\theta \in \tilde \Theta_-$}
\end{cases}.
\]
With this choice of $\phi$, minimizing~\eqref{eq:signedobjective} or minimizing~\eqref{eq:objective} are equivalent, in a sense made precise in Proposition~\ref{prop:signed}. This symmetrization procedure, also suggested in~\cite{chizat2018global}, is simple to implement in practice: in Algorithm~\ref{alg:gradientbased}, we fix at initialization the sign attributed to each particle --- depending on whether it belongs to $\tilde \Theta_+$ or $\tilde \Theta_-$ --- and do not change it throughout the iterations.
\begin{proposition} \label{prop:signed}
The infima of~\eqref{eq:signedobjective} and~\eqref{eq:objective} are the same and:
\begin{enumerate}[(i)]
\item  if $\tilde \mu$ is a minimizer of $\tilde J$ and $\tilde \mu = \tilde \mu_+-\tilde \mu_-$ is its Jordan decomposition, then the measure which restriction to $\tilde \Theta_+$ (resp.\ $\tilde \Theta_-$) coincides with $\tilde \mu_+$ (resp.\ $\mu_-$) is a minimizer of $J$;\label{prop:signedA}
\item reciprocally, if $\mu$ is a minimizer of $J$ then $\mu_+-\mu_-$ where $\mu_+$ (resp. $\mu_-$) is the restriction of $\mu$ to $\tilde \Theta_+$ (resp.\ $\Theta_-$) is a minimizer of $\tilde J$. \label{prop:signedB}
\end{enumerate}
\end{proposition}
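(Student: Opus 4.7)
The proof rests on two maps between $\Mm(\tilde\Theta)$ and $\Mm_+(\Theta)$: a \emph{lifting} $L$ built from the Jordan decomposition, and a \emph{projection} $P$ built from restrictions. The plan is to verify that $J\circ L = \tilde J$ exactly, while $\tilde J \circ P \leq J$ (with possible strict inequality), and then conclude by chasing infima.

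\textbf{Step 1: Lifting and exact equality.} Given $\tilde\mu\in \Mm(\tilde\Theta)$ with Jordan decomposition $\tilde\mu = \tilde\mu_+ - \tilde\mu_-$, define $L(\tilde\mu)\in \Mm_+(\Theta)$ as the unique nonnegative measure on $\Theta = \tilde\Theta_+\sqcup\tilde\Theta_-$ whose restriction to $\tilde\Theta_+$ is $\tilde\mu_+$ and whose restriction to $\tilde\Theta_-$ is $\tilde\mu_-$. Using the definition of $\phi$ (which equals $+\tilde\phi$ on $\tilde\Theta_+$ and $-\tilde\phi$ on $\tilde\Theta_-$), I compute
\[
\int_\Theta \phi\,\d L(\tilde\mu) = \int_{\tilde\Theta}\tilde\phi\,\d\tilde\mu_+ - \int_{\tilde\Theta}\tilde\phi\,\d\tilde\mu_- = \int_{\tilde\Theta}\tilde\phi\,\d\tilde\mu,
\]
and $L(\tilde\mu)(\Theta) = \tilde\mu_+(\tilde\Theta) + \tilde\mu_-(\tilde\Theta) = |\tilde\mu|(\tilde\Theta)$ because $\tilde\mu_+ \perp \tilde\mu_-$. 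This yields $J(L(\tilde\mu)) = \tilde J(\tilde\mu)$, hence $\inf J \leq \inf \tilde J$.

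\textbf{Step 2: Projection and inequality.} Conversely, given $\mu\in\Mm_+(\Theta)$, let $\mu_\pm$ be the restrictions of $\mu$ to $\tilde\Theta_\pm$, viewed as nonnegative measures on $\tilde\Theta$, and set $P(\mu) \coloneqq \mu_+ - \mu_-\in \Mm(\tilde\Theta)$. The linear term is preserved as in Step 1: $\int \tilde\phi\,\d P(\mu) = \int \phi\,\d\mu$. For the total variation, the triangle inequality (equivalently, the fact that the Jordan decomposition minimizes total mass among decompositions as a difference of nonnegative measures) gives $|P(\mu)|(\tilde\Theta) \leq \mu_+(\tilde\Theta) + \mu_-(\tilde\Theta) = \mu(\Theta)$. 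Therefore $\tilde J(P(\mu)) \leq J(\mu)$, and taking infima yields $\inf \tilde J \leq \inf J$, so the two infima coincide.

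\textbf{Step 3: Matching minimizers.} For (i), if $\tilde\mu$ minimizes $\tilde J$, then $J(L(\tilde\mu)) = \tilde J(\tilde\mu) = \inf \tilde J = \inf J$, so $L(\tilde\mu)$ is a minimizer of $J$, which is precisely the measure described in the statement. For (ii), if $\mu$ minimizes $J$, then $\tilde J(P(\mu)) \leq J(\mu) = \inf J = \inf \tilde J$, so $P(\mu) = \mu_+ - \mu_-$ minimizes $\tilde J$.

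\textbf{Main obstacle.} The only non-routine point is the total-variation inequality in Step 2: when $\mu\in\Mm_+(\Theta)$ is arbitrary, the measures $\mu_+,\mu_-$ need not be mutually singular on $\tilde\Theta$, so $(\mu_+,\mu_-)$ may fail to be the Jordan decomposition of $P(\mu)$. I would handle this by noting that for any decomposition $\tilde\mu = \sigma_+ - \sigma_-$ into nonnegative measures, one has $|\tilde\mu|(\tilde\Theta)\leq \sigma_+(\tilde\Theta)+\sigma_-(\tilde\Theta)$ (an immediate consequence of the Jordan/Hahn decomposition), which is exactly what is needed. Everything else is bookkeeping.
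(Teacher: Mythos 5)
Your proof is correct and follows essentially the same route as the paper: both rely on the lift-by-Jordan-decomposition giving exact equality $J\circ L = \tilde J$, the projection-by-restriction giving $\tilde J\circ P\leq J$ via the minimality of the Jordan decomposition's total mass, and then chasing infima. The paper phrases Step 2 slightly more precisely by noting equality holds if and only if $(\mu_+,\mu_-)$ is itself a Jordan decomposition, but this adds nothing to the argument you already have.
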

\begin{proof}
We recall that for any decomposition of a signed measure as the difference of nonnegative measures $\tilde \mu = \tilde \mu_+-\tilde \mu_-$, it holds $\vert \tilde\mu \vert(\Theta) \leq \tilde\mu_+(\Theta)+\tilde\mu_-(\Theta)$, with equality if and only if $(\tilde \mu_+,\tilde \mu_-)$ is the Jordan decomposition of $\tilde \mu$~\cite[Sec. 4.1]{cohn1980measure}.
It follows that starting from any $\tilde \mu\in \Mm(\tilde \Theta)$, the construction in~\eqref{prop:signedA} yields a measure $\mu \in \Mm_+(\Theta)$ satisfying $\tilde J(\tilde \mu) = J(\mu)$. Also, starting from any $\mu \in \Mm_+(\Theta)$, the construction in~\eqref{prop:signedB}  yields a measure $\tilde \mu\in \Mm(\tilde \Theta)$ satisfying $\tilde J(\tilde \mu)\leq J(\mu)$, with equality if and only if $(\mu_+,\mu_-)$ is a Jordan decomposition. The conclusion follows.
\end{proof}

\section{Generic non-convex minimization}\label{app:genericnonconvex}
In this section, we show that any smooth optimization problem on a manifold is equivalent to solving a problem of the form~\eqref{eq:objective}. This corresponds to the case of a scalar-valued $\phi$.
\begin{proposition}
Let $\phi:\Theta \to \RR$ be a smooth function with minimum $\phi^\star<0$ that admits a global minimizer, and let
\begin{align}\label{eq:genericoptim}
\nu^\star \in \arg\min_{\nu \in \Mm_+(\Theta)} J(\nu)&& \text{where}
&&
J(\nu)\coloneqq \frac12\left( 2 + \int_\Theta \phi(\theta)\d \nu(\theta) \right)^2 + \lambda \nu(\Theta)
\end{align}
where $0<\lambda<-2\phi^\star$. Then $\emptyset \neq \spt \nu^\star \subset \arg\min \phi$ so minimizers of $\phi$ can be built from $\nu^\star$. Reciprocally, from a minimizer of $\phi$, one can build a minimizer for~\eqref{eq:genericoptim}.
\end{proposition}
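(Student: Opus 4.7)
First I would observe that this problem fits the template of~\eqref{eq:objective} with Hilbert space $\Ff=\RR$, scalar feature map $\phi$, and loss $R(f)=\tfrac12(2+f)^2$, which is smooth, convex, and bounded below. Assumptions {\sf (A1)} and {\sf (A3)} hold, so Proposition~\ref{prop:optimalitycondition} applies: a minimizer $\nu^\star$ exists, and any $\nu^\star \in \Mm_+(\Theta)$ is a minimizer if and only if $J'_{\nu^\star}\geq 0$ on $\Theta$ with $J'_{\nu^\star}=0$ on $\spt\nu^\star$, where the first variation computes to
\[
J'_{\nu^\star}(\theta)=c^\star\,\phi(\theta)+\lambda,\qquad c^\star:=2+\int_\Theta\phi\,d\nu^\star.
\]

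The central algebraic step is to pin down the sign of $c^\star$. I would integrate the support equality $J'_{\nu^\star}=0$ against $\nu^\star$ to obtain $c^\star(c^\star-2)+\lambda\nu^\star(\Theta)=0$; since $\lambda\nu^\star(\Theta)\geq 0$, this forces $c^\star\in[0,2]$. The case $c^\star=0$ would make $J'_{\nu^\star}\equiv\lambda>0$, so complementarity would yield $\nu^\star=0$, but then $c^\star=2$, a contradiction. I would also rule out $\nu^\star=0$ outright by picking any $\theta^\star\in\arg\min\phi$ and noting that $\tfrac{d}{dr}J(r\delta_{\theta^\star})\big|_{r=0^+}=2\phi^\star+\lambda<0$ by the hypothesis $\lambda<-2\phi^\star$, so $J(0)=2$ is strictly suboptimal. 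Hence $c^\star>0$ and $\spt\nu^\star\neq\emptyset$. Now the support condition $c^\star\phi(\theta)=-\lambda$ on $\spt\nu^\star$, combined with the pointwise inequality $\phi(\theta)\geq -\lambda/c^\star$ on all of $\Theta$, pins $-\lambda/c^\star=\min\phi=\phi^\star$, yielding the desired inclusion $\spt\nu^\star\subset\arg\min\phi$.

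For the converse direction, given any $\theta^\star\in\arg\min\phi$, I would set $r^\star:=-(2\phi^\star+\lambda)/(\phi^\star)^2$, which is positive under the assumption $\lambda<-2\phi^\star$, and check that $\nu:=r^\star\delta_{\theta^\star}$ satisfies the optimality conditions of Proposition~\ref{prop:optimalitycondition}. A direct computation gives $c=2+r^\star\phi^\star=-\lambda/\phi^\star>0$, hence $J'_\nu(\theta)=\lambda(\phi^\star-\phi(\theta))/\phi^\star\geq 0$ (the ratio of two non-positive quantities), with equality exactly when $\phi(\theta)=\phi^\star$, which holds at $\theta^\star$. This confirms that $\nu$ minimizes $J$. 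The only genuine obstacle throughout the argument is determining the sign of $c^\star$ despite the non-convexity of $J$ in $\nu$; the integrated first-order identity $c^\star(c^\star-2)+\lambda\nu^\star(\Theta)=0$ dispatches this cleanly.
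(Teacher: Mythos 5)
Your proof is correct and follows essentially the same route as the paper's: invoke Proposition~\ref{prop:optimalitycondition}, integrate the stationarity equation against $\nu^\star$ to get the scalar identity $f_{\nu^\star}^2+2f_{\nu^\star}+\lambda\nu^\star(\Theta)=0$ (equivalently your $c^\star(c^\star-2)+\lambda\nu^\star(\Theta)=0$), deduce $2+f_{\nu^\star}>0$ so that $\spt\nu^\star\subset\arg\min\phi$, and then exhibit the explicit mass $-(2\phi^\star+\lambda)/(\phi^\star)^2$ for the converse. The one place you are actually a bit tighter than the paper is the determination of the sign of $c^\star$: you obtain $c^\star\in[0,2]$ directly from the sign of $\lambda\nu^\star(\Theta)$ and exclude $c^\star=0$ by a short contradiction, whereas the paper selects the root $f_\nu=\sqrt{1-\lambda\nu(\Theta)}-1$ of the quadratic without fully justifying the choice (and its side remark $\lambda\nu(\Theta)<1$ can in fact fail with equality at $\lambda=-\phi^\star$); your version sidesteps both of these small gaps.
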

\begin{proof}
For a measure $\nu\in \Mm_+(\Theta)$, we define $f_\nu \coloneqq \int_\Theta \phi(\theta)\d\nu(\theta) \in \RR$. It holds
\[
\int_{\Theta}J'_\nu(\theta)\d\nu(\theta) = \int_{\Theta}\left( \phi(\theta)(2+f_\nu) + \lambda \right)\d\nu(\theta)= f_\nu^2 +2f_\nu + \lambda \nu(\Theta).
\]
Now suppose that $\nu$ is a global minimizer of $J$. Then the optimality condition in Proposition~\ref{prop:optimalitycondition} implies that 
\begin{equation}\label{eq:generalnonconvexpoly}
    f_\nu^2 +2f_\nu + \lambda \nu(\Theta)=0.
\end{equation} 
Solving for $f_\nu$ is possible if $\lambda \nu(\Theta)<1$ and leads to $f_\nu =\sqrt{1-\lambda \nu(\Theta)} -1$. We also deduce from the fact that $f_\nu>-1$ that $\arg\min J'_\nu = \arg\min \phi$, and so $\spt \nu \subset \arg\min \phi$. It remains to find under which condition $\nu(\Theta)>0$.
We use the fact that $f_\nu = \phi^\star\nu(\Theta)$ in Equation~\eqref{eq:generalnonconvexpoly}, and get
\[
\nu(\Theta) = \max\left\{ 0, \frac{-2\phi^\star-\lambda}{(\phi^\star)^2} \right\}
\]
which in particular satisfies $\lambda \nu(\Theta)<1$. Thus, as long as $-2\phi^\star>\lambda$, we have $\nu(\Theta)>0$. Finally, we verify that global minimizers exist, so that the above reasoning makes sense. If $-2\phi^\star -\lambda\leq 0$, then $\nu=0$ satisfies the global optimality conditions. Otherwise, choose $\theta^\star$ a minimizer  for $\phi^\star$ and define $\nu = \nu(\Theta)\delta_{\theta^\star}$ with the value above for $\nu(\Theta)$, which also satisfies the global optimality conditions.
\end{proof}

\section{Wasserstein gradient flow}
In this section, we recall and adapt some results and proofs from~\cite{chizat2018global}, for the sake of completeness.

\subsection{Existence}\label{app:existence}
For this result, we assume {\sf (A1-2)}. 
For a compactly supported initial condition $\mu_0\in \Pp_2(\Omega)$, the proof of existence for Wasserstein gradient flows (Eq.~\eqref{eq:WGF}) in~\cite{chizat2018global} goes through, as it is simply based on a compactness arguments which can be directly translated to this Riemannian setting (more precisely, we apply here Arzel\`a-Ascoli compactness criterion for curves in the Wasserstein space on the cone of $\Theta$, which is a complete metric space~\cite{liero2018optimal}). Note that these arguments do not require convexity of $R$, but in order to guarantee global existence in time, we need to assume that $\nabla R$ is bounded in sub-level sets of $F$. 

For the existence of solutions for projected dynamics on $\Theta$ for any $\nu_0\in \Mm_+(\Theta)$, consider a measure $\mu_0\in \Mm_+(\Omega)$ such that $\h\mu_0=\nu_0$ (see~\cite{liero2018optimal} for such a construction) and the corresponding Wasserstein gradient flow $(\mu_t)_{t\geq 0}$ for $F$. Then $\h\mu_t$ is a solution to~\eqref{eq:projectedGF}.

For the existence of Wasserstein gradient flows (Eq.~\eqref{eq:WGF}) for $F$ when $\mu_0$ is not compactly supported, proceed as follows: there exists a Wasserstein-Fisher-Rao gradient flow $\nu_t$ satisfying $\nu_0=\h\mu_0$. Now we can simply define $\mu_t$ as the solution to $\partial_t \mu_t = \div(\mu_t J'_{\nu_t})$. It can be directly checked that $\h\mu_t = \nu_t$ for $t\geq 0$ and thus $\mu_t$ is a solution to Eq.~\eqref{eq:WGF}.

We do not attempt to show uniqueness in the present work. Note that it is proved in~\cite{chizat2018global} for the case where $\Theta$ is a sphere, by applying the theory developed in~\cite{ambrosio2008gradient}.

\subsection{Asymptotic global convergence}
In this section, we give a short proof of Theorem~\ref{th:consistency}, adapted from~\cite{chizat2018global}. The next lemma is the crux of the global convergence proof. It gives a criterion to espace from the neighborhood of measures which are not minimizers.

\begin{lemma}[Criteria to espace local minima]\label{lem:escape}
Under {\sf (A1-3)}, let $\nu \in \Mm_+(\Theta)$ be such that  $v^\star \coloneqq \min_{\theta \in \Theta} J'_{\nu}(\theta)<0$. Then there exists $v \in [2v^\star/3,v^\star/3]$ and  $\epsilon>0$ such that   if $(\nu_t)_{t\geq 0}$ is a projected gradient flow of $J$ satisfying $\Vert \nu - \nu_{t_0}\Vert_\BL^*< \epsilon$ for some $t_0\geq 0$ and $\nu_{t_0}((J'_{\nu})^{-1}(]-\infty,v]))>0$ then there exists $t_1>t_0$ such that $\Vert \nu- \nu_{t_1}\Vert_\BL^*\geq \epsilon$.
\end{lemma}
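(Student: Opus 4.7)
The plan is to proceed by contradiction: assume $\Vert \nu_t - \nu\Vert_\BL^* < \epsilon$ for all $t \geq t_0$ and show that a carefully chosen mass indicator of $\nu_t$ must grow unboundedly, violating this BL-closeness.

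First I would fix $v = v^\star/2$, which lies in the interval $[2v^\star/3, v^\star/3]$, and exploit that $\nu' \mapsto J'_{\nu'}$ is locally Lipschitz from $\Vert\cdot\Vert_\BL^*$ into $C^1(\Theta)$ by smoothness of $\phi$ and $R$ (combined with boundedness of $\nu_t$ coming from monotonicity of $J$ along the flow). Choosing $\epsilon$ small enough then forces $\Vert J'_{\nu_t} - J'_\nu\Vert_{C^1(\Theta)} \leq |v^\star|/12$ for all $t \geq t_0$ under the contradiction hypothesis, so on the enlarged sublevel set $S_{v/2} = \{J'_\nu \leq v/2\}$ we have $J'_{\nu_t} \leq 5v^\star/12 < 0$ uniformly in $t$.

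The core step is to test the weak form of \eqref{eq:projectedGF} against $\psi = \chi \circ J'_\nu$, where $\chi: \RR \to [0,1]$ is a smooth non-increasing cut-off equal to $1$ on $(-\infty, v]$ and $0$ on $[v/2, \infty)$. This yields
$$\frac{d}{dt} \int \psi \, d\nu_t = -4\alpha \int \chi(J'_\nu) J'_{\nu_t} \, d\nu_t - \beta \int \chi'(J'_\nu) \nabla J'_\nu \cdot \nabla J'_{\nu_t} \, d\nu_t.$$
The first term is bounded below by $c\alpha \int \psi \, d\nu_t$ since $J'_{\nu_t}$ is strictly negative on $\spt\psi \subset S_{v/2}$. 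For the second, the sign $\chi'\leq 0$ is crucial: splitting $\nabla J'_{\nu_t} = \nabla J'_\nu + (\nabla J'_{\nu_t} - \nabla J'_\nu)$, the main piece $-\beta \int \chi'(J'_\nu) \Vert \nabla J'_\nu\Vert^2 \, d\nu_t$ is non-negative --- reflecting that the spatial dynamics is an approximate gradient descent on $J'_\nu$ which pushes mass \emph{into} sublevel sets --- while the cross term is $O(\beta \epsilon)$. Setting $u_t = \int \psi \, d\nu_t$, this produces a differential inequality $\dot u_t \geq c\alpha\, u_t - C\beta\epsilon$.

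The conclusion combines $u_{t_0} \geq \nu_{t_0}(S_v) > 0$ with the upper bound $u_t \leq \Vert \psi\Vert_\BL (\nu(\Theta)+\epsilon)$ forced by the contradiction hypothesis: whenever $u_t$ exceeds the equilibrium $C\beta\epsilon/(c\alpha)$ of the differential inequality, Grönwall gives exponential growth, incompatible with the fixed BL upper bound. The main obstacle I anticipate is the regime where $\nu_{t_0}(S_v)$ is below this equilibrium threshold; I plan to handle it by iterating the same argument on a chain of slightly enlarged sublevel sets $\{J'_\nu \leq v^{(k)}\}$ with $v^{(k)} \nearrow 0$, each step cashing in on exponential growth in a wider cut-off whose support still lies in the region where $J'_{\nu_t}$ is uniformly negative (available thanks to $v^\star < 0$). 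After finitely many rounds the indicator must exceed the fixed BL bound, producing the escape time $t_1 > t_0$ asserted in the lemma.
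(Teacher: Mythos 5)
Your approach diverges from the paper's and contains a gap that you partially anticipate but do not close. The paper picks $v$ to be a \emph{regular value} of $J'_\nu$ (Morse--Sard, using that $\phi$ is $d$-times continuously differentiable, an assumption imported from Theorem~\ref{th:consistency}), so that $K_v=(J'_\nu)^{-1}(]-\infty,v])$ has a smooth boundary on which $\nabla J'_\nu\cdot\vec n>u>0$. After shrinking $\epsilon$, $\nabla J'_{\nu_t}\cdot\vec n>u/2$ still holds on $\partial K_v$, so the velocity field $-\beta\nabla J'_{\nu_t}$ points strictly \emph{inward} at $\partial K_v$; passing to the Lagrangian (characteristic) representation of solutions to~\eqref{eq:projectedGF}, $K_v$ is forward-invariant under the flow. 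There is therefore no boundary flux term at all, and one gets the clean inequality $\frac{\d}{\d t}\nu_t(K_v)\geq -4\alpha\int_{K_v}J'_{\nu_t}\d\nu_t\geq \alpha\vert v^\star\vert\nu_t(K_v)$, i.e.\ unconditional exponential growth. This works for \emph{any} positive initial mass $\nu_{t_0}(K_v)>0$, which is exactly the regime your argument struggles with.

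Your weak-form computation with a smooth cut-off $\chi\circ J'_\nu$ is sound as far as it goes (modulo a small arithmetic slip: under your $\Vert J'_{\nu_t}-J'_\nu\Vert_{C^1}\leq\vert v^\star\vert/12$ and $v=v^\star/2$, one only gets $J'_{\nu_t}\leq v^\star/6$ on $S_{v/2}$, not $5v^\star/12$; this is harmless). But the differential inequality $\dot u_t\geq c\alpha u_t-C\beta\epsilon$ has a strictly positive equilibrium $u^\star=C\beta\epsilon/(c\alpha)$, and the quantifiers in the lemma fix $\epsilon$ \emph{before} $\nu_{t_0}$: the hypothesis $\nu_{t_0}(K_v)>0$ permits $u_{t_0}\ll u^\star$, in which case the inequality allows $u_t$ to decay and no escape is forced. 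Your proposed fix---a chain of enlarged sublevel sets---does not obviously repair this: the $O(\beta\epsilon)$ error comes from $\Vert\chi'\Vert_\infty\cdot\Vert\nabla J'_{\nu_t}-\nabla J'_\nu\Vert_\infty$ on the annulus where $\chi'$ is supported, and enlarging the cut-off neither removes this term nor makes $u_t$ larger at the start, so there is no exponential growth to ``cash in'' on at any stage if the mass starts below the equilibrium. The underlying obstruction is that a smooth cut-off unavoidably introduces a positive-measure transition region, whereas the sharp, regular-value sublevel set concentrates the ``flux'' on a codimension-one surface where its sign can be controlled. To make your route work you would essentially need to pass to the sharp indicator $\ones_{K_v}$ and argue that the distributional boundary term has a definite sign---which requires the regular-value hypothesis and brings you back to the paper's Lagrangian argument.
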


\begin{proof}
We first assume that $J'_{\nu}$ takes nonnegative values and let $v\in [2v^\star/3,v^\star/3]$ be a regular value of $g_\nu$, i.e.\ be such that $\Vert \nabla J'_{\nu}\Vert$ does not vanish on the $v$ level-set of $J'_\nu$. Such a $v$ is guaranteed to exist thanks to Morse-Sard's lemma and our assumption that $\phi$ is $d$-times continuously differentiable, which implies that $J'_{\nu}$ is the same. Let $K_v = (J'_{\nu})^{-1}(]-\infty,v])\subset \Theta$ be the corresponding sublevel set. By the regular value theorem, the boundary $\partial K_v$ of $K_v$ is a differentiable orientable compact submanifold of $\Theta$ and is orthogonal to $\nabla J'_{\nu}$. 
By construction, it holds for all $\theta \in K_v$, $J'_{\nu}(\theta) \leq v^\star/3$ and, for some $u>0$, by the regular value property, $\nabla J'_\nu(\theta)\cdot \vec n_{\theta} > u$ for all  $\theta \in \partial K_v$ where $\vec n_\theta$ is the unit normal vector to $\partial K_v$ pointing outwards. Since the map $\nu \mapsto J'_{\nu}$ is locally Lipschitz as a map $(\Mm_+(\Theta), \Vert \cdot \Vert_{\BL}^*) \to (\Cc^1(\Theta),\Vert \cdot\Vert_{\BL})$, there exists $\epsilon>0$ such that if $\nu_t\in \Mm_+(\Theta)$ satisfies $\Vert \nu_t -\nu\Vert_{\BL}^*<\epsilon$, then
\begin{align*}
\forall \theta \in K_v, \quad  J'_{\nu_t}(\theta) \leq v^\star/4 &&\text{and}&& \forall \theta \in \partial K_v,\quad \nabla J'_{\nu_t}(\theta)\cdot \vec n_{\theta} > u/2.
\end{align*}
Now let us consider a projected gradient flow $(\nu_t)_{t\geq 0}$ such that $\Vert \nu_{0} -\nu\Vert_{\BL}^*<\epsilon$ and let $t_1>0$ be the first time such that $\Vert \nu_{t_1}-\nu\Vert_{\BL}^*\geq \epsilon$, which might a priori be infinite. For $t\in {[t_0,t_1[}$, it holds
\[
\frac{\d}{\d t} \nu_t(K_\nu) \geq -4 \alpha \int_{K_v} J'_{\nu_t}\d\nu_t  \geq v^\star \alpha  \nu_t(K_v)
\]
where the first inequality can be seen by using the ``characteristic'' representation of solutions to~\eqref{eq:projectedGF}, see~\cite{maniglia2007probabilistic}. It follows by Gr\"onwall's lemma that $\nu_t(K_v)\geq \exp(\alpha v^\star t)\nu_0(K_v)$ which implies that $t_1$ is finite. Finally, if we had not assumed that $0$ is in the range of $J'_\nu$ in the first place, then we could simply take $K=\Theta$ and conclude by similar arguments.
\end{proof}

\begin{proof}[Proof of Theorem~\ref{th:consistency}]
Let $\nu_\infty\in \Mm(\Theta)$ be the weak limit of $(\nu_t)_t$. It satisfies the stationary point condition $\int \vert J'_{\nu_\infty}\vert^2\d\nu_{\infty}=0$. Then by the optimality conditions in Proposition~\ref{prop:optimalitycondition}, either $\nu_\infty$ is a minimizer of $J$, or $J'_{\nu_\infty}$ is not nonnegative. For the sake of contradiction, assume the latter. Let $\epsilon$ be given by Lemma~\ref{lem:escape} and let $t_0 = \sup \{ t\geq 0\;;\; \Vert \nu_t-\nu_\infty \Vert_{\BL}^*\geq \epsilon\}$ which is finite since we have assumed that $\nu_t$ weakly converges to $\nu_\infty$. But $\nu_{t_0}$ has full support since it can be written as the pushforward of a rescaled version of $\nu_0$ by a diffeomorphism, see~\cite[Eq. (1.3)]{maniglia2007probabilistic} (note that this step is considerably simplified here by the fact that we do not have a potentially non-smooth regularizer, unlike in~\cite{chizat2018global} where topological degree theory comes into play). Then the conclusion of Lemma~\ref{lem:escape} contradicts the definition of $t_0$.
\end{proof}

 \section{Proof of the gradient inequality}\label{app:sharpness}
 In this whole section, we consider without loss of generality $\alpha=\beta=1$ (we explain in Section~\ref{app:conclusionlocal} how to adapt the results to arbitrary $\alpha,\beta$). For simplicity, we only track the dependencies in $\nu$ and $\tau$. Any quantity that is independent of $\nu$ and $\tau$ is treated as a constant and represented by $C,C',C''>0$, and the quantity these symbols refer to can change from line to line.
 
 \subsection{Bound on the transport distance to minimizers} \label{app:bounddistanceA}
Given a measure $\nu\in \Mm_+(\Theta)$, we consider the local centered moments introduced in Definition~\ref{def:localfeatures} and in addition, for $i\in \{1,\dots,m^\star\}$,
\begin{align*}
\delta\theta_i = \bar\theta_i-\theta_i && \tilde b^\theta_i = \bar r_i\delta\theta_i, && s_i \coloneqq \bar r_i(\tr\Sigma_i)^{\frac12}.
\end{align*}
Finally, we will quantify errors with the following quantity
\begin{align}\label{eq:error}
W_\tau(\nu)^2 \coloneqq \bar r_0^2 + \Vert  b\Vert^2 + \Vert s\Vert^2 = \bar r_0^2 + \sum_{i=1}^{m^\star} \left(\vert  b_i^r\vert^2 + \Vert b_i^\theta\Vert^2  + s_i^2 \right)
\end{align}
which also controls the $\widehat W_2$ distance  (introduced in Section~\ref{sec:theoryprelim}) to the minimizer $\nu^\star$ of $J$, as shown in the next proposition.
\begin{lemma}\label{lem:distanceinequality}
It holds $\widehat W_2(\nu,\nu^\star)\leq W_\tau(\nu)(1+O(\tau^2)+O(W_\tau(\nu)^2))$.
\end{lemma}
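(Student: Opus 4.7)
The plan is to bound $\widehat W_2$ from above by constructing explicit lifts $\mu,\mu^\star\in\Pp_2(\Omega)$ of $\nu$ and $\nu^\star$ together with an explicit transport plan between them. I will exploit the partition $\Theta=\Theta_0\sqcup\Theta_1\sqcup\cdots\sqcup\Theta_{m^\star}$: for each $i$ with $\bar r_i>0$, I lift $\nu|_{\Theta_i}$ as $\mu_i\coloneqq p_i\,\delta_{\bar r_i/\sqrt{p_i}}\otimes(\nu|_{\Theta_i}/\bar r_i^2)$ for a freely chosen weight $p_i>0$, and I lift $r_i^2\delta_{\theta_i}$ as $\mu_i^\star\coloneqq p_i\,\delta_{(r_i/\sqrt{p_i},\,\theta_i)}$. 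These choices satisfy $\h\mu_i=\nu|_{\Theta_i}$ and $\h\mu_i^\star=r_i^2\delta_{\theta_i}$ while using the same mass $p_i$ on both sides. To make $\mu$ and $\mu^\star$ probability measures with matching total mass, I add a compensating atom of mass $p_0$ at the apex $\{r=0\}$ on the $\mu^\star$ side; this atom contributes nothing to $\h$ thanks to the cone identification.

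Next I construct a transport plan that couples $\mu_i$ with $\mu_i^\star$ for each $i\geq 1$ and couples $\mu_0$ with the apex atom of $\mu^\star$. Using the cone distance $d((r_1,\theta_1),(r_2,\theta_2))^2=r_1^2+r_2^2-2r_1r_2\cos_\pi(\dist(\theta_1,\theta_2))$, a direct computation reveals a remarkable cancellation: the free weight $p_i$ drops out, and the contribution from cluster $i\geq 1$ equals
\[
\bar r_i^2+r_i^2-2\bar r_i r_i\int_{\Theta_i}\cos(\dist(\theta,\theta_i))\,d(\nu|_{\Theta_i}/\bar r_i^2)(\theta),
\]
while the $\Theta_0$ contribution is exactly $\bar r_0^2$. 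Summing these gives the preliminary bound on $\widehat W_2^2(\nu,\nu^\star)$.

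To relate this to $W_\tau(\nu)$, I linearize using $\cos(x)\geq 1-x^2/2$ (which preserves the upper bound), work in normal coordinates at each $\theta_i$ (so that $\dist$ coincides with the Euclidean norm up to $O(\tau^2)$ curvature corrections), and decompose $\|\theta-\theta_i\|^2=\|\theta-\bar\theta_i\|^2+2(\theta-\bar\theta_i)\cdot(\bar\theta_i-\theta_i)+\|\bar\theta_i-\theta_i\|^2$, where the cross term integrates to zero against $\nu|_{\Theta_i}/\bar r_i^2$ by definition of $\bar\theta_i$. The cluster-$i$ contribution then reduces to $(\bar r_i-r_i)^2+\bar r_i r_i\|\bar\theta_i-\theta_i\|^2+(r_i/\bar r_i)\,s_i^2$, which I rewrite in terms of the weighted biases and covariances using the identities $b_i^r=(\bar r_i^2-r_i^2)/(2r_i)$, $b_i^\theta=(\bar r_i^2/r_i)(\bar\theta_i-\theta_i)$ and $s_i^2=\bar r_i^2\tr\Sigma_i$. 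Taylor expanding in the small ratio $\rho_i-1=\bar r_i/r_i-1$ yields the claimed bound after summing in $i$ and adding $\bar r_0^2$.

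The main technical obstacle will be the careful bookkeeping of the higher-order corrections: each quantity of the form $\|b_i^\theta\|^2/\rho_i^3$ or $s_i^2/\rho_i$ must be traded against its $\rho_i=1$ limit and the deviation absorbed into the multiplicative $1+O(\tau^2)+O(W_\tau(\nu)^2)$ factor. The assumption that $r_i>0$ for all $i\in\{1,\dots,m^\star\}$ provides a uniform lower bound on $\min_i r_i$ and is crucial for controlling these corrections uniformly. A secondary subtlety is the regime where some $\bar r_i=0$; this cannot occur when $W_\tau(\nu)$ is small relative to $\min_i r_i$, which is precisely the regime in which the lemma is informative.
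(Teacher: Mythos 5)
Your proof is essentially the same as the paper's: both construct a transport plan in the lifted cone space $\Pp_2(\Omega)$ that collapses every cluster $\Theta_i$ to the single point $\theta_i$ at the appropriate radius (and $\Theta_0$ to the apex), compute the cone-metric cost via the identity $\sum_i\bigl(\bar r_i^2 + r_i^2 - 2\bar r_i r_i\int_{\Theta_i}\cos(\dist(\theta,\theta_i))\,\d\nu/\bar r_i^2\bigr)+\bar r_0^2$, and then expand $\cos$ and perform the bias--variance decomposition. The paper's presentation is slightly leaner: it starts from an arbitrary lift $\mu$ of $\nu$ and pushes it forward by the collapse map $T(r,\theta)=(r\,r_i/\bar r_i,\theta_i)$ on $\Theta_i$, which makes the weight-cancellation you point out automatic (the cost depends only on $\h\mu=\nu$), whereas your explicit lifts with free weights $p_i$ and compensating apex atom are a more elaborate realization of the same plan.
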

\begin{proof}
Note that for $W_\tau(\nu)$ small enough, it holds $\nu(\Theta_i)>0$ for $i\in \{1,\dots,m^\star\}$. Let $\mu \in \Pp_2(\Omega)$ be such that $\h\mu = \nu$ and consider the transport map $T:\Omega\to\Omega$ defined as
 \[
 T(r,\theta) =
 \begin{cases}
(r\frac{r_i}{\bar r_i}, \theta_i) &\text{if $\theta \in \Theta_i$ and $\bar r_i>0$, $i\in \{1,\dots,m^\star\}$},\\
(0,\theta) &\text{otherwise}.
 \end{cases}
 \]
By construction, it holds $\h(T_\#\mu) =  \nu^\star$. Let us estimate the transport cost associated to this map 
\[
\mathcal{T} = \int_\Omega \dist((r,\theta),T(r,\theta))^2\d\mu(r,\theta).
\]
The geodesic distance associated to the cone metric is
\begin{align*}
\dist((r_1,\theta_1),(r_2,\theta_2))^2 &= r_1^2 + r_2^2 - 2r_1r_2\cos(\min\{\dist(\theta_1,\theta_2),\pi\})\\
&= (r_1-r_2)^2 +r_1r_2\dist(\theta_1,\theta_2)^2 +O(r_1r_2\dist(\theta_1,\theta_2)^4)
\end{align*}
Now, if we only consider points $\theta\in \Theta_i$ with $\tilde \theta$ their coordinates in a normal frame centered at $\theta_i$ (note that in all other proofs, we do not need to distinguish between $\theta$ and $\tilde \theta$), we have the approximation
\begin{align*}
\dist((r_1,\theta_1),(r_2,\theta_2))^2 = (r_1-r_2)^2 +r_1r_2\Vert \tilde \theta_1 - \tilde \theta_2\Vert^2(1+O(\tau^2)).
\end{align*}
Let us decompose $T(r,\theta)$ as $(rT^r(\theta),T^\theta(\theta))$ and estimate the two contributions forming $\Tt$ separately. On the one hand, we have
\begin{align*}
\int (rT^r(\theta)-r)^2\d\mu(r,\theta) &= \bar r_0^2 + \sum_{i=1}^{m^\star} (\bar r_i-r_i)^2 = \bar r_0^2 +\Vert b^r\Vert^2(1+O(W_\tau(\nu)^2)).
\end{align*}
On the other hand, we have
\begin{align*}
\int r^2T^r(\theta)\Vert T^\theta(\theta)-\theta\Vert^2\d\mu(r,\theta) &=\sum_{i=1}^{m^\star} \bar r_ir_i (\tr\Sigma_i + \Vert \delta \theta_i\Vert^2)=(\Vert s\Vert^2 + \Vert b^\theta\Vert^2)(1+O(W_\tau(\nu)^2)).
\end{align*}
As a consequence, we have $\Tt = W_\tau(\nu)(1 + O(W_\tau(\nu)^2)+ O(\tau^2))$. Remark that this estimate does not depend on the chosen lifting $\mu$ satisfying $\h\mu=\nu$. We then conclude by using the characterization in~\cite[
Thm. 7.20]{liero2018optimal} for the distance $\widehat W_2$:
\[
\widehat W_2(\nu_1,\nu_2) = \min \left\{W_2(\mu_1,\mu_2) \;;\; (\h\mu_1,\h\mu_2) = (\nu_1,\nu_2)\right\}.
\]
Thus $\widehat W_2(\nu,\nu^\star)^2 \leq W_2(\mu,T_\#(\mu))^2\leq \Tt$, and the result follows.
\end{proof}

 \subsection{Local expansion lemma} 
 \begin{lemma}[Expansion around $\nu^\star$]\label{lem:expansion}
 Let $\psi$ be any (vector or real-valued) smooth function on $\Theta$ and $\nu\in \Mm_+(\Theta)$. If $\tau>0$ is an admissible radius, then the following first and second-order expansions hold
 \begin{align*}
 \int \psi \d(\nu - \nu^\star) 
 &= \sum_{i=1}^m r_i \bar \nabla \psi(\theta_i)^\intercal b_i 
 + \int_{\Theta_0} \psi \d\nu 
 +\sum_{i=1}^m \int_{\Theta_i} M_{2,\psi}(\theta_i,\theta)\d\nu(\theta)\\
 &=\sum_{i=1}^m r_i\bar \nabla \psi(\theta_i)^\intercal b_i 
 +\frac12  \sum_{i=1}^m \bar r_i^2  \left( \tr\left(\nabla^2\psi(\theta_i) \Sigma_i \right) 
 + \delta \theta_i^\intercal \nabla^2\psi(\theta_i) \delta \theta_i \right)\\
 &\qquad + \int_{\Theta_0}  \psi \d\nu +\sum_{i=1}^m \int_{\Theta_i} M_{3,\psi}(\theta_i,\theta)\d\nu(\theta)
 \end{align*}
 where $M_{k,\psi}(\theta_i,\theta)$ is the remainder in the $k-1$-th order Taylor expansion of $\psi$ around $\theta_i$ in local coordinates (and we recall that $\bar \nabla \psi := (2\psi,\nabla \psi)$).
 \end{lemma}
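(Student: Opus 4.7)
The strategy is a straightforward partition-and-Taylor-expansion argument, exploiting that $\nu^\star = \sum_{i=1}^{m^\star} r_i^2 \delta_{\theta_i}$ is a finite sum of Diracs and that the atoms $\theta_i$ sit inside disjoint coordinate patches $\Theta_i$ on which normal coordinates are well defined by the admissibility of $\tau$.

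First I would decompose
\[
\int \psi\,\d(\nu-\nu^\star) = \int_{\Theta_0}\psi\,\d\nu + \sum_{i=1}^{m^\star}\left(\int_{\Theta_i}\psi\,\d\nu - r_i^2\,\psi(\theta_i)\right),
\]
so the problem reduces to analyzing each local contribution $\int_{\Theta_i}\psi\,\d\nu - r_i^2\psi(\theta_i)$ separately. On $\Theta_i$ I would work in normal coordinates centered at $\theta_i$ and write Taylor's formula with integral remainder either at first order, $\psi(\theta) = \psi(\theta_i) + \nabla\psi(\theta_i)^\intercal(\theta-\theta_i) + M_{2,\psi}(\theta_i,\theta)$, or at second order, $\psi(\theta) = \psi(\theta_i) + \nabla\psi(\theta_i)^\intercal(\theta-\theta_i) + \tfrac12(\theta-\theta_i)^\intercal \nabla^2\psi(\theta_i)(\theta-\theta_i) + M_{3,\psi}(\theta_i,\theta)$.

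Integrating against $\d\nu$ restricted to $\Theta_i$, the constant term contributes $\bar r_i^2\,\psi(\theta_i)$ and the linear term contributes $\nabla\psi(\theta_i)^\intercal \int_{\Theta_i}(\theta-\theta_i)\,\d\nu = \bar r_i^2\,\nabla\psi(\theta_i)^\intercal \delta\theta_i$ by the very definition of $\bar\theta_i$ and $\delta\theta_i = \bar\theta_i-\theta_i$. Collecting, the zeroth- and first-order pieces yield
\[
(\bar r_i^2-r_i^2)\psi(\theta_i) + \bar r_i^2\,\nabla\psi(\theta_i)^\intercal \delta\theta_i = r_i\bigl(2\psi(\theta_i)\,b_i^r + \nabla\psi(\theta_i)^\intercal b_i^\theta\bigr) = r_i\,\bar\nabla\psi(\theta_i)^\intercal b_i,
\]
using the definitions $b_i^r = (\bar r_i^2-r_i^2)/(2 r_i)$ and $b_i^\theta = \bar r_i^2\delta\theta_i/r_i$ (with $\alpha=\beta=1$), and the convention $\bar\nabla\psi = (2\psi,\nabla\psi)$. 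This already produces the first-order expansion once we sum over $i$ and put back the $\int_{\Theta_0}\psi\,\d\nu$ term and the remainders $\int_{\Theta_i}M_{2,\psi}(\theta_i,\theta)\,\d\nu$.

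For the second-order expansion, the only extra step is to handle the quadratic term. Splitting $\theta-\theta_i = (\theta-\bar\theta_i) + \delta\theta_i$, the cross terms vanish upon integration because $\int_{\Theta_i}(\theta-\bar\theta_i)\,\d\nu = 0$ by definition of $\bar\theta_i$, leaving
\[
\int_{\Theta_i}(\theta-\theta_i)^\intercal \nabla^2\psi(\theta_i)(\theta-\theta_i)\,\d\nu = \bar r_i^2\,\tr\bigl(\nabla^2\psi(\theta_i)\,\Sigma_i\bigr) + \bar r_i^2\,\delta\theta_i^\intercal \nabla^2\psi(\theta_i)\,\delta\theta_i,
\]
where I used $\int_{\Theta_i}(\theta-\bar\theta_i)(\theta-\bar\theta_i)^\intercal \d\nu = \bar r_i^2\,\Sigma_i$, which is the defining relation for $\Sigma_i$ (again with $\beta=1$). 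Multiplying by $1/2$, adding the first-order piece already computed, and summing over $i$ gives the second identity with remainders $\int_{\Theta_i}M_{3,\psi}(\theta_i,\theta)\,\d\nu$.

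There is no real obstacle in this lemma: everything is a direct application of Taylor's theorem in normal coordinates combined with the algebraic identities defining $b_i$, $\delta\theta_i$ and $\Sigma_i$. The only minor points to watch are (i) verifying that admissibility of $\tau$ indeed lets us use a single normal chart on each $\Theta_i$ (so that the Taylor remainders are well defined), and (ii) being careful in the second-order step to use that $\bar\theta_i$ is the mean of $\theta\mapsto\theta$ (in coordinates) under $\nu\vert_{\Theta_i}$, which is precisely what kills the cross terms.
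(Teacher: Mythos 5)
Your proof is correct and follows essentially the same route as the paper's: Taylor-expand $\psi$ in normal coordinates on each $\Theta_i$, use a bias–variance split around $\bar\theta_i$ so the cross term vanishes, and translate $(\bar r_i^2-r_i^2)\psi(\theta_i)+\bar r_i^2\nabla\psi(\theta_i)^\intercal\delta\theta_i$ into $r_i\bar\nabla\psi(\theta_i)^\intercal b_i$ via the definitions of $b_i^r$, $b_i^\theta$. You also correctly carry the factor $\tfrac12$ on the quadratic term, which is dropped (a typo) in the first displayed Taylor expansion of the paper's proof; otherwise the arguments coincide.
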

 \begin{proof}
By a Taylor expansion of $\psi$ around $\theta_i$ for $i\in \{1,\dots,m^\star\}$, it holds
\begin{align*}
 \int_{\Theta_i} \psi \d\nu= \int_{\Theta_i} \left( \psi(\theta_i) + \nabla\psi(\theta_i)^\intercal (\theta-\theta_i) + (\theta-\theta_i) ^\intercal \nabla^2\psi(\theta_i)(\theta-\theta_i) + M_{3,\psi}(\theta_i,\theta)\right)\d\nu(\theta)
\end{align*}
and substracting $\int_{\Theta_i}\psi\d\nu^\star = r_i^2\phi(\theta_i)$ yields
\begin{multline*}
 \int_{\Theta_i} \psi \d(\nu-\nu^\star)= (\bar r_i^2 -r_i^2)\psi(\theta_i) +  \bar r_i^2 \nabla \psi(\theta_i)^\intercal \delta \theta_i \\ 
 +\frac12  \sum_{i=1}^m \bar r_i^2  \left( \tr\left(\nabla^2\psi(\theta_i) \Sigma_i \right) 
 + \delta \theta_i^\intercal \nabla^2\psi(\theta_i) \delta \theta_i \right)  +\sum_{i=1}^m \int_{\Theta_i} M_{3,\psi}(\theta_i,\theta)\d\nu(\theta)
 \end{multline*}
where  we have used a bias-variance decomposition for the quadratic term. The result follows by summing the integrals over each $\Theta_i$ and using the expression of $b$.
 \end{proof}

  \subsection{Bound on the distance to minimizers} 
 In the next lemma, we globally bound the quantity $W_\tau(\nu)$ from Eq.~\eqref{eq:error} in terms of the function values. It involves the quantity $v^\star>0$ which is such that for any local minimum $\theta$ of $J'_{\nu^\star}$, either $\theta = \theta_i$ for some $i\in \{1,\dots,m^\star\}$ or $J'_{\nu^\star}(\theta)\geq v^*$ (which is non-zero under {\sf (A5)}). We also recall that $\tilde b^\theta_i = \bar r_i \delta\theta_i$, as defined in Section~\ref{app:bounddistanceA}.
 
\begin{lemma}[Global distance bound]\label{lem:distancebound}
Under {\sf (A1-5)}, let $\tau_{\adm}$ be an admissible radius $\tau$ as in Definition~\ref{def:localfeatures}, fix some $J_{\max}>0$ and let
\[
\tau_0 = \min\left\{\tau_{\adm}, 2\sqrt{\frac{v^\star}{\sigma_{\min}(H)}}, \frac{3\sigma_{\min}(H)}{2 \Lip (\nabla^2 J'_{\nu^\star})} \right\}.
\]
Then there exists $C, C'>0$ such that for all $\tau\leq \tau_0$ and  $\nu \in \Mm_+(\Theta)$ such that $J(\nu)\leq J_{\max}$, it holds 
\begin{align*}
W_\tau(\nu) \leq \frac{C}{\tau^2}(J(\nu)-J^\star)^\frac12 &&\text{and}&&\Vert \tilde b^\theta\Vert^2+\Vert s\Vert^2\leq C'(J(\nu)-J^\star). 
\end{align*}
\end{lemma}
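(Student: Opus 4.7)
The plan is to combine the convexity of $J$ with a quantitative lower bound on $J'_{\nu^\star}$ supplied by assumption {\sf (A5)}. Since $\nu^\star$ is a minimizer, Proposition~\ref{prop:optimalitycondition} yields $J'_{\nu^\star}\geq 0$ and $J'_{\nu^\star}\equiv 0$ on $\spt\nu^\star$, so by convexity of $J$ in $\nu$ we get $J(\nu)-J^\star \geq \int J'_{\nu^\star}\,d\nu$. Each $\theta_i$ is therefore a local minimum of $J'_{\nu^\star}$, so $\nabla J'_{\nu^\star}(\theta_i)=0$ and $\nabla^2 J'_{\nu^\star}(\theta_i)=H_i \succeq \sigma_{\min}(H)\Id$. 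A second-order Taylor expansion controlled by $\Lip(\nabla^2 J'_{\nu^\star})$ then yields $J'_{\nu^\star}(\theta)\geq \tfrac{1}{4}\sigma_{\min}(H)\,\dist(\theta,\theta_i)^2$ on each $\Theta_i$ as soon as $\tau \leq 3\sigma_{\min}(H)/(2\Lip(\nabla^2 J'_{\nu^\star}))$. On $\Theta_0$, I combine this quadratic lower bound near the boundary of each $\Theta_i$ with the strict-slackness value $v^\star$ elsewhere; the hypothesis $\tau \leq 2\sqrt{v^\star/\sigma_{\min}(H)}$ makes the two bounds compatible and yields $J'_{\nu^\star}\geq \tfrac{1}{4}\sigma_{\min}(H)\tau^2$ on all of $\Theta_0$.

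Integrating these pointwise lower bounds over $\Theta=\bigsqcup_{i=0}^{m^\star}\Theta_i$ and using the bias-variance identity $\int_{\Theta_i}\|\theta-\theta_i\|^2\,d\nu = s_i^2+\|\tilde b^\theta_i\|^2$ gives
\[
J(\nu)-J^\star \;\geq\; \frac{\sigma_{\min}(H)}{4}\bigl(\tau^2\bar r_0^2+\|s\|^2+\|\tilde b^\theta\|^2\bigr),
\]
which immediately delivers the second stated bound $\|\tilde b^\theta\|^2+\|s\|^2 \leq C'(J(\nu)-J^\star)$ with $C'=4/\sigma_{\min}(H)$, and in addition $\bar r_0^2\leq C'\tau^{-2}(J(\nu)-J^\star)$. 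The identity $b_i^\theta=(\bar r_i/r_i)\tilde b_i^\theta$ together with the uniform bound $\bar r_i^2\leq \nu(\Theta)\leq J_{\max}/\lambda$ (from $R\geq 0$ and the sublevel hypothesis $J(\nu)\leq J_{\max}$) then promotes the bound on $\|\tilde b^\theta\|^2$ to $\|b^\theta\|^2\leq C''(J(\nu)-J^\star)$.

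All that remains is to bound the mass-imbalance component $\|b^r\|^2$, for which I plan a two-regime argument keyed on whether $J(\nu)-J^\star$ is small or large compared to $\tau^4$. In the large regime, the trivial uniform estimate $|b_i^r|\leq C(J_{\max},\lambda,r_i)$ immediately gives $\|b^r\|^2 \leq C \leq C'\tau^{-4}(J(\nu)-J^\star)$. In the small regime, I sharpen the convexity inequality using $\nabla^2 R(f^\star)\succ 0$: once $f=\int\phi\,d\nu$ is sufficiently close to $f^\star$, one has $J(\nu)-J^\star \geq \tfrac{c}{2}\|f-f^\star\|^2 + \int J'_{\nu^\star}\,d\nu$. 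Applying the first-order expansion of Lemma~\ref{lem:expansion} with $\psi=\phi$, the linear part of $f-f^\star$ in the local moments equals $\sum_i r_i\bar\nabla\phi(\theta_i)^\intercal b_i$, whose squared $d^2R_{f^\star}$-norm is exactly $b^\intercal K b \geq \sigma_{\min}(K)\|b\|^2$. Treating the second-order Taylor remainders of $\phi$ and the contribution $\int_{\Theta_0}\phi\,d\nu$ as perturbations (controlled via the already-established bounds on $\bar r_0$, $\|s\|$, $\|\tilde b^\theta\|$), rearranging yields $\|b^r\|^2\leq C\tau^{-4}(J(\nu)-J^\star)$, after which the three pieces combine into $W_\tau(\nu)^2 \leq C^2\tau^{-4}(J(\nu)-J^\star)$.

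The main obstacle is precisely the bound on $\|b^r\|^2$: unlike $\|\tilde b^\theta\|^2$ and $\|s\|^2$, the cluster-wise mass imbalance is not visible from $\int J'_{\nu^\star}\,d\nu$ alone, because $J'_{\nu^\star}$ vanishes to second order at each $\theta_i$ regardless of how much mass is concentrated in $\Theta_i$. Extracting it requires invoking the second-order structure of $R$ at $f^\star$ through the global kernel $K$ and $\sigma_{\min}(K)>0$, and the $\tau^{-4}$ factor will arise from balancing the Taylor-remainder perturbations against the quadratic lower bound coming from $\int J'_{\nu^\star}\,d\nu$.
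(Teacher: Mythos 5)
Your proposal follows essentially the same route as the paper's proof: the same quadratic lower bound on $J'_{\nu^\star}$ over each $\Theta_i$ via the Taylor/Lipschitz argument, the same uniform lower bound $\tau^2\sigma_{\min}(H)/4$ on $\Theta_0$ using strict slackness, the same bias-variance decomposition to extract $\bar r_0^2, \|s\|^2, \|\tilde b^\theta\|^2$ from $\int J'_{\nu^\star}\,d\nu$, and the same use of $\sigma_{\min}(K)>0$ together with Lemma~\ref{lem:expansion} to recover $\|b\|$ from $\|f_\nu-f^\star\|$. The only superficial difference is that you split into two regimes depending on the size of $J(\nu)-J^\star$, whereas the paper handles both at once by writing the strong-convexity lower bound as $C\min\{\|f_\nu-f^\star\|^2,\|f_\nu-f^\star\|\}$ and then completing the square to obtain $\|b\|\leq C\|f_\nu-f^\star\|+C\tilde W_\tau(\nu)^2$; the two are interchangeable.
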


\begin{proof}
Let us write $f_\nu \coloneqq \int \phi\d\nu$ and $f^\star = \int \phi \d\nu^\star$. By strong convexity of $R$ at $f^\star$, and optimality of $\mu^\star$, there exists $C>0$ such that for all $\nu\in \Mm_+(\Theta)$ it holds
\begin{equation}\label{eq:strongconvexityinequality}
J(\nu)-J^\star \geq \int_{\Theta} J'_{\nu^\star}\d\nu+ C\min\{ \Vert f_\nu-f^\star\Vert^2,\Vert f_\nu-f^\star\Vert\} .
\end{equation}
To prove the first claim, we thus have to bound $W_\tau(\nu)$ using the terms in the right-hand side of~\eqref{eq:strongconvexityinequality}. 

\paragraph{Step 1.} By a Taylor expansion, one has for $\theta \in \Theta_i$ for $i\in \{1,\dots,m^\star\}$,
\[
\vert J'_{\nu^\star}(\theta) - \tfrac12 (\theta -\theta_i)^\intercal H_i  (\theta-\theta_i) \vert \leq \tfrac16 \Lip( \nabla^2 J'_{\nu^\star}) \Vert \theta-\theta_i\Vert^3.
\]
Thus, if $\Vert \theta - \theta_i\Vert \leq 3\sigma_{\min}(H)/(2 \Lip( \nabla^2 J'_{\nu^\star}))$, then $J'_{\nu^\star}(\theta) \geq \frac14 (\theta -\theta_i)^\intercal H_i  (\theta-\theta_i)$ for $\theta\in \Theta_i$. Decomposing the integral of this quadratic term into bias and variance, we get
\begin{align*}\label{eq:biasvariance}
\int_{\Theta_i} (\theta -\theta_i)^\intercal H_i (\theta-\theta_i)\d\nu(\theta) 
&= \bar r_i^2 \left(\delta \theta_i^\intercal  H_i \delta \theta_i + \tr(\Sigma_i H_i )\right)
\end{align*}
and we deduce a first bound by summing the terms for $i\in \{1,\dots,m^\star\}$,
\[
\int_{\Theta\setminus \Theta_0} J'_{\nu^\star} \d\nu \geq \frac{\sigma_{\min}(H)}4(\Vert \tilde b^\theta\Vert^2 + \Vert s\Vert^2).
\]

\paragraph{Step 2.} In order to lower bound the integral over $\Theta_0$, we first derive a lower bound for $J'_{\nu^\star}$ on $\Theta_0$. This is a continuously differentiable and nonnegative function on a closed domain $\Theta_0$ so its minimum is attained either at a local minima in the interior of $\Theta_0$ or on its boundary. Using the quadratic lower bound from the previous paragraph, it follows that for $\theta\in \Theta_0$,
\[
J'_{\nu^\star}(\theta)\geq \min \{ v^\star,  \tau^2 \sigma_{\min}(H)/4\}.
\]
Thus, if we also assume that $\tau\leq 2\sqrt{v^\star/\sigma_{\min}(H)}$ then $J'_{\nu^\star}(\theta)\geq \tau^2 \sigma_{\min}(H)/4$ for $\theta\in \Theta_0$ and it follows that
\[
\int_{\Theta_0} J'_{\nu^\star} \d\nu \geq \frac{\sigma_{\min}(H)}{4} \tau^2\bar r_0^2 .
\]
Using inequality~\eqref{eq:strongconvexityinequality}  we have shown so far that
\begin{equation}\label{eq:localcontrol}
\tilde W_\tau(\nu)^2\coloneqq  \bar r_0^2 +\Vert \tilde b^\theta\Vert^2 +\Vert s\Vert^2 \leq \frac{C}{\tau^2} (J(\mu)-J^\star).
\end{equation}
Notice that $\tilde W_\tau(\nu)$ is similar to $W_\tau(\nu)$ but it does not contain the terms controlling the deviations of mass $\vert \bar r_i-r_i\vert$. These quantities can be controlled by using the coercivity of $R$, i.e. the last term in~\eqref{eq:strongconvexityinequality}, as we do now.

\paragraph{Step 3.}
Using the first order expansion of Lemma~\ref{lem:expansion} then squaring gives
\begin{align*}
\left\vert \Vert f_\nu-f^\star\Vert^2 - \frac12 b^\intercal K b \right\vert
& \leq C \left( \Vert b\Vert \tilde W_\tau(\nu)^2 +  \tilde W_\tau(\nu)^4 \right).
\end{align*}
Since we have assumed that $K$ is positive definite, it follows
\[
\Vert f_\nu-f^\star\Vert^2 \geq C \Vert b\Vert^2 - C \tilde W_\tau(\nu)^2 \Vert b\Vert - C\tilde W_\tau(\nu)^4
\]
and thus, after rearranging the terms
\[
(\Vert b\Vert - C\tilde W_\tau(\nu)^2)^2 \leq  C \Vert f_\nu-f^\star\Vert^2 + C\tilde W_\tau(\nu)^4.
\]
It follows that $\Vert b\Vert \leq  C\Vert f_\nu-f^\star\Vert + C\tilde W_\tau(\nu)^2$. Also, by inequality~\eqref{eq:strongconvexityinequality}, if $J(\nu)\leq J_{\max}$, then $\Vert f_\nu -f^\star\Vert^2\leq C(J(\nu)-J^\star)$.  Moreover, by inequality~\eqref{eq:localcontrol}, we get
\[
\Vert b\Vert \leq \frac{C}{\tau^2}(J(\nu)-J^\star) +C (J(\nu)-J^\star)^\frac12 \leq \frac{C}{\tau^2} (J(\nu)-J^\star)^\frac12.
\]
We finally combine with the bound on $\tilde W_\tau(\nu)$ to conclude since $W_\tau(\nu)^2\leq \tilde W_\tau(\nu)^2+\Vert b\Vert^2$
\end{proof} 

\subsection{Proof of the distance inequality (Proposition~\ref{prop:distanceinequality})}\label{app:distanceinequality}
By Lemma~\ref{lem:distanceinequality}, it holds 
\[
\widehat W_2(\nu,\nu^\star)\leq W_\tau(\nu)(1+O(\tau^2)+O(W_\tau(\nu)^2)).
\]
Moreover, by Lemma~\ref{lem:distancebound}, there exists $\tau_0>0$ and $C>0$ such that 
\[
W_\tau(\nu)\leq \frac{C}{\tau_0^2}(J(\nu)-J^\star)^\frac12.
\]
Combining these two lemmas, it follows that for some $C'>0$, we have 
\[
\widehat W_2(\nu,\nu^\star)^2 \leq C'(J(\nu)-J^\star)^\frac12.
\]
 This also implies a control on the Bounded-Lipschitz distance since it holds $(\Vert \nu - \nu^\star\Vert_{\BL}^*)^2\leq (2+\pi^2/2)(\nu(\Theta)+\nu^\star(\Theta))\widehat W_2(\nu,\nu^\star)^2$, see~\cite[Prop. 7.18]{liero2018optimal}.

 \subsection{Local estimate of the objective}
 We now prove a local expansion formula for $J$.
 \begin{proposition}[Local expansion]\label{prop:localexpansion}
 It holds
 \[
 J(\nu)-J^\star = \frac12 b^\intercal K b + \frac12 \sum_{i=1}^m \bar r_i^2 \left( \tr(\Sigma_i H_i) +\delta \theta^\intercal H_i \delta \theta_i \right) + \int_{\Theta_0} J'_{\nu^\star} \d\nu + \mathop{\mathrm{err}}(\tau,\nu)
 \]
 where $\mathop{\mathrm{err}}(\tau,\nu) =O( \tau (\Vert \tilde b^\theta \Vert^2 + \Vert s\Vert^2) + W_\tau(\nu)^3)$. In particular, if $\tau$ is fixed small enough,
 \[
 J(\nu)-J^\star \leq  \sigma_{\max}(K) \Vert b\Vert^2 + \sigma_{\max}(H)(\Vert b^\theta\Vert^2 +\Vert s\Vert^2) +\Vert J'_{\nu^\star}\Vert_\infty \bar r_0^2 +O(W_\tau(\nu)^3).
 \]
 \end{proposition}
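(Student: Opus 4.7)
The plan is to perform a Taylor expansion of $J(\nu) - J^\star$ around the minimizer $\nu^\star$, first at the ambient level in $\Ff$, and then transfer the result into the local-moment language using Lemma~\ref{lem:expansion}. More precisely, writing $f_\nu \coloneqq \int \phi\, d\nu$, the smoothness of $R$ yields
\[
J(\nu)-J^\star = \langle \nabla R(f^\star), f_\nu - f^\star\rangle + \tfrac12 (f_\nu - f^\star)^\intercal d^2 R_{f^\star}(f_\nu - f^\star) + \lambda(\nu(\Theta) - \nu^\star(\Theta)) + O(\Vert f_\nu - f^\star\Vert^3),
\]
and by definition of $J'_{\nu^\star}$ the first and third terms combine as $\int_\Theta J'_{\nu^\star}\,d(\nu - \nu^\star)$.

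Next, I would expand this linear-plus-mass term using the \emph{second-order} formula in Lemma~\ref{lem:expansion} with $\psi = J'_{\nu^\star}$. The point is that by Proposition~\ref{prop:optimalitycondition}, $J'_{\nu^\star}$ vanishes and is stationary at each $\theta_i \in \spt(\nu^\star)$, hence $\bar\nabla J'_{\nu^\star}(\theta_i) = (2 J'_{\nu^\star}(\theta_i), \nabla J'_{\nu^\star}(\theta_i)) = 0$. The first-order term in the expansion is therefore zero, and since $\nabla^2 J'_{\nu^\star}(\theta_i) = H_i$ (with $\beta=1$) we obtain
\[
\int J'_{\nu^\star}\, d(\nu - \nu^\star) = \tfrac12 \sum_{i=1}^{m^\star} \bar r_i^2\bigl(\tr(H_i \Sigma_i) + \delta\theta_i^\intercal H_i \delta\theta_i\bigr) + \int_{\Theta_0} J'_{\nu^\star}\, d\nu + R_1,
\]
where the third-order Taylor remainder satisfies $|R_1| \leq C\sum_i \int_{\Theta_i} \Vert \theta - \theta_i\Vert^3 \, d\nu \leq C\tau \sum_i \bar r_i^2(\tr\Sigma_i + \Vert \delta\theta_i\Vert^2) = C\tau(\Vert s\Vert^2 + \Vert \tilde b^\theta\Vert^2)$, which is exactly the first piece of the advertised error.

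For the quadratic-in-$f$ term, I would apply the \emph{first-order} formula of Lemma~\ref{lem:expansion} with $\psi = \phi$, giving $f_\nu - f^\star = \sum_i r_i \bar\nabla\phi(\theta_i)^\intercal b_i + \int_{\Theta_0}\phi\,d\nu + R_2$ with $\Vert R_2\Vert \leq C(\Vert s\Vert^2 + \Vert \tilde b^\theta\Vert^2)$. Plugging into the bilinear form $\frac12(\cdot)^\intercal d^2 R_{f^\star}(\cdot)$ produces the main term $\frac12 b^\intercal K b$ by the very definition of $K$, together with cross terms of order $\Vert b\Vert \cdot (\bar r_0^2 + \Vert s\Vert^2 + \Vert \tilde b^\theta\Vert^2)$ and squared remainders of order $(\bar r_0^2 + \Vert s\Vert^2 + \Vert \tilde b^\theta\Vert^2)^2$. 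Since each of $\Vert b\Vert$, $\bar r_0$, $\Vert s\Vert$, $\Vert \tilde b^\theta\Vert$ is bounded by $W_\tau(\nu)$, all cross and squared remainders are $O(W_\tau(\nu)^3)$, and the cubic error $O(\Vert f_\nu - f^\star\Vert^3)$ from the Taylor expansion of $R$ is likewise $O(W_\tau(\nu)^3)$ because $\Vert f_\nu - f^\star\Vert = O(W_\tau(\nu))$.

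Collecting all three blocks gives the stated expansion. The second (inequality) claim then follows by bounding each quadratic form by its largest eigenvalue, using $\bar r_i^2(\tr(\Sigma_i H_i) + \delta\theta_i^\intercal H_i \delta\theta_i) \leq \sigma_{\max}(H)(s_i^2 + \Vert \tilde b_i^\theta\Vert^2)$, and noting that $\Vert \tilde b^\theta\Vert$ and $\Vert b^\theta\Vert$ are comparable via the bounded factor $\bar r_i / r_i$ when $\tau$ is small, while $\int_{\Theta_0} J'_{\nu^\star}\,d\nu \leq \Vert J'_{\nu^\star}\Vert_\infty \bar r_0^2$, absorbing the $\tau(\Vert s\Vert^2 + \Vert \tilde b^\theta\Vert^2)$ piece of the error by taking $\tau$ small. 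The main obstacle is purely organizational: one must track which cross terms are absorbed into $\tau \cdot$(local variance) versus $W_\tau(\nu)^3$, and verify that the factor $\tau$ (small) is enough to dominate the universal constants from the third-order Taylor remainder of $J'_{\nu^\star}$.
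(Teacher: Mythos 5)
Your proof is correct and follows essentially the same route as the paper: Taylor-expand $R$ around $f^\star$, recognize $\langle \nabla R(f^\star), f_\nu - f^\star\rangle + \lambda(\nu(\Theta)-\nu^\star(\Theta)) = \int J'_{\nu^\star}\,d(\nu-\nu^\star)$, then expand $\int J'_{\nu^\star}\,d(\nu-\nu^\star)$ at second order via Lemma~\ref{lem:expansion} (with the first-order term killed by stationarity of $J'_{\nu^\star}$ at each $\theta_i$) and expand $\|f_\nu - f^\star\|^2_\star$ by plugging the first-order expansion of $\phi$ into the bilinear form to produce $b^\intercal K b$, with all cross and squared remainders landing in $O(W_\tau(\nu)^3)$. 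Your tracking of where the $\tau(\|s\|^2+\|\tilde b^\theta\|^2)$ error originates (the cubic Taylor remainder of $J'_{\nu^\star}$, controlled by $\|\theta-\theta_i\|\le\tau$ on $\Theta_i$) and your reconciliation of $\tilde b^\theta$ with $b^\theta$ via the factor $r_i/\bar r_i = 1+O(W_\tau(\nu))$ both match the paper's closing remark that $r_i$ and $\bar r_i$ are interchangeable up to higher-order error.
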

 \begin{proof} Let us write $f_\nu \coloneqq \int \phi\d\nu$ and $f^\star = \int \phi \d\nu^\star$. By a second order Taylor expansion of $R$ around $f^\star$, we have
 \[
 J(\nu) - J^\star = \int_{\Theta} J'_{\nu^\star} \d\nu+ \frac12 \Vert f_\nu-f^\star\Vert^2_\star + O(\Vert f_\nu-f^\star\Vert^3).
 \]
 Using the first order expansion of Lemma~\ref{lem:expansion} for $\phi$, we get 
 $
 \Vert f_\nu-f^\star\Vert^2_\star 
 = b^\intercal Kb + O(W_{\tau}(\nu)^3)
 $. Also, using the second order expansion of Lemma~\ref{lem:expansion} for $J'_{\nu^\star}$ and using the fact that $J'_{\nu^\star}$ and its gradient vanish for all $\theta_i$, we get 
 \[
 \int_{\Theta} J'_{\nu^\star} \d\nu = \frac12 \sum_{i=1}^m \bar r_i^2 \left( \tr(\Sigma_i H_i) +\delta \theta^\intercal H_i \delta \theta_i \right) + \int_{\Theta_0}J'_{\nu^\star} \d\nu  + O(\tau (\Vert s\Vert^2+\Vert  b^\theta \Vert^2))
 \]
and the expansion follows. Notice also that in the expression of $J(\nu)$, $\bar r_i$ and $r_i$ are interchangeable up to introducing higher order error, since $\vert r_i - \bar r_i\vert = O(\vert b^r_i\vert)$ (and also $\Vert \tilde b^\theta\Vert = \Vert b^\theta\Vert(1+O(W_\tau(\nu)))$).
 \end{proof}
 
 
 \subsection{Local estimate of the gradient norm}
 
\begin{proposition}[Gradient estimate]\label{prop:gradientestimate}
For $\nu \in \Pp_2(\Omega)$, it holds 
\[
\Vert g_\nu\Vert^2_{L^2(\nu)} =  b^\intercal (K + H)^2b+ \sum_{i=1}^m \bar r_i^2 \tr (\Sigma_i H_i^2) +\Vert g_\nu\Vert^2_{L^2(\nu\vert_{\Theta_0})}+\mathop{\mathrm{err}}(\tau,\nu)
\]
where $\mathop{\mathrm{err}}(\tau,\nu)\lesssim \tau(\Vert \tilde b^\theta\Vert^2 +\Vert s\Vert^2) +W_\tau(\nu)^3$. In particular, if $\tau$ is fixed small enough
\[
\Vert g_\nu\Vert^2_{L^2(\nu)}  \geq \frac12 (\sigma_{\min}(K)+ \sigma_{\min}(H))^2\Vert b\Vert^2 + \frac12 \sigma_{\min}(H)^2 \Vert s\Vert^2 +  \frac14 \bar r_0^2 \sigma_{\min}(H)^2 \tau^4 +O(W_\tau(\nu)^3).
\]
\end{proposition}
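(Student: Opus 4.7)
The plan is to Taylor-expand the augmented gradient $\bar\nabla J'_\nu := (2J'_\nu,\nabla J'_\nu)$ around each $\theta_i$, exploit the optimality conditions $J'_{\nu^\star}(\theta_i) = \nabla J'_{\nu^\star}(\theta_i) = 0$, and then integrate $\Vert\bar\nabla J'_\nu\Vert^2$ against $\nu$ via a bias-variance decomposition on each cluster $\Theta_i$. As a first step, I would linearise $\nabla R$ at $f^\star$ to get $J'_\nu(\theta)-J'_{\nu^\star}(\theta) = \langle\phi(\theta),\Delta\rangle + O(\Vert f_\nu-f^\star\Vert^2)$ with $\Delta := \nabla^2 R(f^\star)(f_\nu-f^\star)\in\Ff$, and apply Lemma~\ref{lem:expansion} to $\phi$ to identify $f_\nu - f^\star = \sum_{i=1}^{m^\star} r_i\bar\nabla\phi(\theta_i)^\intercal b_i + \int_{\Theta_0}\phi\,d\nu + O(W_\tau(\nu)^2)$. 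Lemma~\ref{lem:distancebound} gives $\Vert f_\nu - f^\star\Vert = O(W_\tau(\nu))$, so $\Delta$ is precisely the linear object paired by the kernel $K$, modulo the $\Theta_0$-boundary contribution and an $O(W_\tau(\nu)^2)$ remainder.

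For $\theta = \theta_i + u \in \Theta_i$ in local coordinates, the optimality conditions at $\theta_i$ reduce the Taylor expansions to $J'_\nu(\theta) = A_i + B_i^\intercal u + \tfrac12 u^\intercal H_i u + O(\Vert u\Vert^3 + \Vert u\Vert\Vert\Delta\Vert)$ and $\nabla J'_\nu(\theta) = B_i + H_i u + O(\Vert u\Vert^2 + \Vert u\Vert\Vert\Delta\Vert)$, where $A_i := \langle\phi(\theta_i),\Delta\rangle$ and $B_i := \nabla\phi(\theta_i)^\intercal\Delta$. By the very definition of $K$, the pair $(2r_i A_i,\,r_i B_i)$ matches $(Kb)_{(i,\cdot)}$ up to the boundary term and the same $O(W_\tau(\nu)^2)$ remainder. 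A bias-variance decomposition of $\nu|_{\Theta_i}/\bar r_i^2$ around its mean $\bar\theta_i = \theta_i + \delta\theta_i$ then yields
\begin{align*}
\int_{\Theta_i}\Vert\bar\nabla J'_\nu\Vert^2\,d\nu = \bar r_i^2\bigl[\,4(A_i + B_i^\intercal\delta\theta_i)^2 + \Vert B_i + H_i\delta\theta_i\Vert^2\,\bigr] + \bar r_i^2\tr(\Sigma_i H_i^2) + \mathrm{err}_i,
\end{align*}
the trace term arising from the variance of $\nabla J'_\nu$, whose derivative along $u$ is $H_i$ (the variance of the $r$-component $2J'_\nu$ involves only $B_i = O(W_\tau(\nu))$ and is higher order). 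Using $b_i^\theta = (\bar r_i^2/r_i)\delta\theta_i \approx r_i\delta\theta_i$ together with the block-diagonal structure of $H$ (vanishing $r$-row and column), the bias collapses to $\Vert((K+H)b)_{(i,\cdot)}\Vert^2 + O(W_\tau(\nu)^3)$. Summing over $i$ and adding the unmodified contribution on $\Theta_0$ gives the claimed identity; the cubic Taylor remainders integrated over $\Theta_i$ yield the $\tau(\Vert\tilde b^\theta\Vert^2 + \Vert s\Vert^2)$ piece of the error through $\int_{\Theta_i}\Vert u\Vert^3\,d\nu \leq \tau\int_{\Theta_i}\Vert u\Vert^2\,d\nu$, while the second-order remainder of $\nabla R$ contributes the $W_\tau(\nu)^3$ piece. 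The ``in particular'' lower bound then follows at once from $\Vert(K+H)b\Vert^2 \geq (\sigma_{\min}(K)+\sigma_{\min}(H))^2\Vert b\Vert^2$ (both $K,H$ are PSD), $H_i \succeq \sigma_{\min}(H)I_d$, and a crude lower bound on $\Vert g_\nu\Vert^2_{L^2(\nu|_{\Theta_0})}$ obtained by combining the perturbation $\Vert J'_\nu - J'_{\nu^\star}\Vert_\infty = O(W_\tau(\nu))$ with the uniform bound $J'_{\nu^\star} \geq \tfrac14 \sigma_{\min}(H)\tau^2$ on $\Theta_0$ proved inside Lemma~\ref{lem:distancebound}.

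The hard part is the bookkeeping. The main obstacle I anticipate is verifying that the cross-term $8A_i B_i^\intercal\delta\theta_i$ appearing in the squared bias, and the various mixed remainders produced when substituting the expansion of $\Delta$ into the kernel structure, all land inside the announced $O(W_\tau(\nu)^3) + O(\tau(\Vert\tilde b^\theta\Vert^2 + \Vert s\Vert^2))$ error bucket and do not contaminate the leading form $b^\intercal(K+H)^2 b$. The crucial mechanism is that $A_i,B_i = O(W_\tau(\nu))$, so any product involving them and $\delta\theta_i$ is already cubic; combined with the block-diagonal shape of $H$ (so that the first-order-in-$u$ variation $2B_i^\intercal u$ of the $r$-component has no matching $H$ entry and must therefore be absorbed via this cubic mechanism), this keeps the leading order precisely in the kernel-plus-curvature form predicted by the statement.
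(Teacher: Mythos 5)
Your proposal is correct and follows essentially the same route as the paper: Taylor-expand $\bar\nabla J'_\nu$ around each $\theta_i$, exploit the optimality conditions $\bar\nabla J'_{\nu^\star}(\theta_i)=0$, invoke Lemma~\ref{lem:expansion} to write $f_\nu-f^\star$ in terms of $b$ and the $\Theta_0$-contribution, perform a bias-variance decomposition of the squared norm on each cluster $\Theta_i$, and treat $\Theta_0$ separately via the uniform lower bound on $J'_{\nu^\star}$ established in Lemma~\ref{lem:distancebound}. The only difference is cosmetic: you package $\nabla R(f_\nu)-\nabla R(f^\star)$ into a single object $\Delta=\nabla^2R(f^\star)(f_\nu-f^\star)$ and write the linear coefficients as $(A_i,B_i)$, whereas the paper keeps $\delta f_0$, $\delta f_b$ and $\delta f_{\mathrm{err}}$ as separate contributions and expands $\Vert\bar\nabla J'_\nu\Vert^2$ into ten explicit cross-terms; your bookkeeping is arguably tighter but the leading terms, the collapse to $\Vert(K+H)b\Vert^2+\sum_i\bar r_i^2\tr(\Sigma_iH_i^2)$, and the $O(\tau(\Vert\tilde b^\theta\Vert^2+\Vert s\Vert^2)+W_\tau(\nu)^3)$ error structure are identical.
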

\begin{proof}
For this proof, we write $f_\nu - f^\star = \delta f_0 +\delta f_b + \delta f_{\mathrm{err}}$ where
\begin{align*}
 \delta f_0 &\coloneqq  \int_{\Theta_0}\phi(\theta)\d\nu(\theta),&
 \delta f_b & \coloneqq  \sum_{i=1}^m r_i \bar\nabla \phi(\theta_i) b_{i},&
\delta f_{\mathrm{err}} &\coloneqq \sum_{i=1}^{m} \int_{\Theta_i} M_{\phi,2}(\theta_i,\theta)\d\nu(\theta).
\end{align*}
where the decomposition follows from Lemma~\ref{lem:expansion}.
The expression for the norm of the gradient is as follows:
\[
\Vert g_\nu\Vert^2_{L^2(\nu)} = \int_{\Theta}\Vert\bar \nabla J'_\nu(\theta) \Vert^2\d\nu(\theta)
\]
where $\bar \nabla J = (2J,\nabla J)$. We start with the following decomposition for $\theta \in \Theta_i$ (recall that $J'_\nu(\theta) =\langle \phi(\theta),\nabla R(\int \phi \d\nu)\rangle +\lambda$):
\begin{multline*}
J'_\nu(\theta) = \lambda + \left\langle \phi(\theta_i) + (\theta-\theta_i)^\intercal \nabla \phi(\theta_i) + \frac12  (\theta-\theta_i)^\intercal\nabla^2\phi(\theta_i)(\theta-\theta_i)+  M_{\phi,3}(\theta_i,\theta),\nabla R(f^\star)\right\rangle \\
+\langle \phi(\theta_i)+ (\theta-\theta_i)^\intercal \nabla \phi(\theta_i) + M_{\phi,2}(\theta_i,\theta),f_\mu-f^\star\rangle_{\star}+ \langle \phi(\theta),M_{\nabla R,2}(f^\star,f)\rangle
\end{multline*}
Here we use the notation $\langle\cdot,\cdot\rangle_\star$ to denote the quadratic form associated to $\nabla^2 R(f^\star)$.
Thanks to the optimality conditions  $\bar \nabla J'_{\nu^\star}(\theta_i)=0$ for $i\in \{1,\dots,m\}$, we get
\begin{align*}
\bar \nabla_j J'_\nu(\theta) &=  [H_i(\theta-\theta_i)]_j + \langle \bar \nabla_j \phi(\theta_i),\delta f_0+\delta f_b \rangle_\star + [N(\theta_i,\theta)]_j\\
 &=  [H_i(\theta-\bar \theta_i)]_j+ \left( [H_i(\bar \theta_i-\theta_i)]_j+ \langle \bar \nabla_j \phi(\theta_i),\delta f_b \rangle_\star\right) + \langle \bar \nabla_j  \phi(\theta_i),\delta f_0\rangle_\star + [N(\theta_i,\theta)]_j
\end{align*}
where $N$ collects the higher order terms and is defined as
\begin{multline*}
 [N(\theta_i,\theta)]_j =  \langle \bar \nabla_j M_{\phi,3}(\theta_i,\theta),\nabla R(f^\star)\rangle 
 +\langle \bar \nabla_j M_{\phi,2}(\theta_i,\theta),f-f^\star\rangle_{\star} + \langle \bar \nabla_j \phi(\theta),M_{\nabla R,2}(f^\star,f)\rangle
\end{multline*}
where $\Vert \bar \nabla_j M_{\phi,3}(\theta_i,\theta)\Vert = O(\Vert \theta-\theta_i\Vert^2)$ if $j>0$ and $O(\Vert \theta -\theta_i\Vert^3)$ if $j=0$.
Expanding the square gives the following ten terms:
\begin{align}
\int_{\Omega\setminus \Omega_0} \Vert\bar \nabla J'_\nu \Vert^2\d\nu(\theta)
&= \sum_{i=1}^{m} \int_{\Theta_i} \sum_{j=0}^{d} [H_i(\theta-\bar \theta_i)]_j^2 \d\nu \tag{I} \\
&+ \sum_{i=1}^{m}\int_{\Theta_i} \sum_{j=0}^d \left( \langle\bar \nabla_j\phi(\theta_i),\delta f_b \rangle_\star + [H_i(\bar \theta_i-\theta_i)]_j \right)^2 \d\nu(\theta) \tag{II} \\
&+ \sum_{i=1}^{m} \int_{\Theta_i} \sum_{j=0}^d \langle \bar \nabla_j\phi(\theta_i), \delta f_0 \rangle_\star^2\d\nu(\theta)  \tag{III} \\
&+ \sum_{i=1}^{m}\int_{\Theta_i} \!\! 2 \sum_{j=0}^d  [H_i(\theta-\bar \theta_i)]_j\cdot ( \langle\bar \nabla_j \phi(\theta_i),\delta f_b\rangle_\star + [H_i(\bar \theta_i-\theta_i)]_j) \d\nu(\theta) \tag{IV} \\
&+ \sum_{i=1}^{m}\int_{\Theta_i} \!\! 2 \sum_{j=0}^d [H_i(\theta-\bar \theta_i)]_j\cdot \langle \bar \nabla_j\phi(\theta_i), \delta f_0\rangle_\star\d\nu(\theta) \tag{V}\\
&+ \sum_{i=1}^{m}\int_{\Theta_i} \!\! 2 \sum_{j=0}^d (\langle\bar \nabla_j \phi(\theta_i),\delta f_b\rangle_\star + [H_i(\bar \theta_i-\theta_i)]_j)\cdot \langle \bar\nabla_j \phi(\theta_i), \delta f_0\rangle_\star\d\nu(\theta)\!\! \tag{VI}\\
&+ \sum_{i=1}^{m}\int_{\Theta_i} \!\! 2 \sum_{j=0}^d [N(\theta_i,\theta)]_j\cdot  [H_i(\theta-\bar \theta_i)]_j\d\nu(\theta) \tag{VII}\\
&+ \sum_{i=1}^{m}\int_{\Theta_i} \!\! 2 \sum_{j=0}^d [N(\theta_i,\theta)]_j\cdot  (\langle\bar \nabla_j \phi(\theta_i),\delta f_b\rangle_\star + [H_i(\bar \theta_i-\theta_i)]_j)\d\nu(\theta) \tag{VIII}\\
&+\sum_{i=1}^{m}\int_{\Theta_i} \!\! 2 \sum_{j=0}^d [N(\theta_i,\theta)]_j\cdot \langle \bar \nabla_j \phi(\theta_i), \delta f_0 \rangle_\star\d\nu(\theta) \tag{IX}\\
&+ \sum_{i=1}^{m} \int_{\Theta_i} \sum_{j=0}^d [N(\theta_i,\theta)]_j^2 \d\nu(\theta) \tag{X}
\end{align}
Terms (I) to (II) are the main terms in the expansion, while the other terms are higher order. The term (I) is a local curvature term and can be expressed as $\mathrm{(I)} = \sum_{i=1}^m \bar r_i^2 \tr \Sigma_i H_i^2$. 
The term (II) is a global interaction term that writes
\begin{align*}
\mathrm{(II)} &=
\sum_{i=1}^m \bar r_i^2 \sum_{j=0}^d \vert \langle\bar \nabla_j\phi(\theta_i),\delta f_b\rangle_\star + [H_i(\bar \theta_i-\theta_i)]_j \vert^2 \\
&= \sum_{i=1}^m\sum_{j=0}^d \left\vert \sum_{i'=1}^m \sum_{j'=0}^d (\langle \bar r_i \bar \nabla_j\phi(\theta_i),r_{i'}\bar \nabla_{j'}\phi(\theta_{i'})\rangle_\star + \bar H_{(i,j),(i,j')})(b_{i',j'}) \right\vert^2\\
&= \Vert (\bar K + H)(b)\Vert^2
\end{align*}
where the entries of $\bar K$ and $\bar H$ differ from those of $K$ and $H$ by a factor $\bar r_i/r_i$. More precisely,
\[
[\bar K-K]_{(i,j),(i',j')} = (\bar r_i/r_i-1)K_{(i,j),(i',j')}
\]
and similarly for $\bar H- H$. Since $\vert \bar r_i/r_i-1\vert = O(\vert b^r_i\vert)$ we have $\sigma_{\max}(\bar K-K )=O(W_\tau(\nu))$. It follows, by expanding the square, that 
\[
\Vert (\bar K + \bar H)(b)\Vert^2 = \Vert (K + H)(b)\Vert^2 + O(W_\tau(\nu)^3).
\]

The remaining terms are error terms, that we estimate directly in terms of $W_\tau(\nu)$ and $\tau$. We use in particular the fact that by H\"older's inequality, $\int_{\Theta_i} \Vert \theta - \bar \theta_i\Vert\d\nu(\theta) = O(\bar r_i^2 \tr \Sigma_i^{\frac12})$. One has 
\begin{itemize}
\item $\mathrm{(III)} =O\left(\sum_{i=1}^m \bar r_i^2 \bar r_0^4 \right)= O(W_{\tau}^4(\nu))$;
\item $\mathrm{(IV)} = \mathrm{(V)} = 0$ because the integral of the terms $H_i(\theta-\bar \theta_i)$ vanishes;
\item $\mathrm{(VI)} = O\left((\sum_{i=1}^m \bar r_i^2 (\Vert b\Vert +\Vert \delta \theta_i\Vert)\cdot \bar r_0^2\right)=O(W_{\tau}^3(\nu))$;
\item $\mathrm{(VII)} = O(\tau (\Vert \tilde b^\theta\Vert^2 + \Vert s\Vert^2)) + O(W_{\tau}(\nu)^3)$;
\item $\mathrm{(VIII)} = O(W_{\tau}^3(\nu))$;
\item $\mathrm{(IX)} = O(W_{\tau}^4(\nu))$;
\item $\mathrm{(X)} = O(\tau^2(\Vert \tilde b^\theta\Vert^2+\Vert s\Vert^2))+O(W_{\tau}(\nu)^4)$.
\end{itemize}
It follows that overall, the error term is in $O(\tau (\Vert \tilde b^\theta\Vert^2+\Vert s\Vert^2)+W_{\tau}(\nu)^3)$.
There remains to lower bound the norm of the gradient over $\Theta_0$, which can be done as follows. As seen in the proof of Lemma~\ref{lem:distancebound}, if $\tau$ is small enough then $J'_{\nu^\star}(\theta)\geq \tau^2\sigma_{\min}(H)/4$ for $\theta \in \Theta_0$. Considering only the first component of the gradient, it holds
\[
\int_{\Theta_0} \Vert\bar \nabla J'_\nu(\theta) \Vert^2\d\nu(\theta) \geq \int_{\Theta_0} 4 \vert J'_\nu(\theta)\vert^2 \d\nu(\theta).
\]
Using the expansion $J'_\nu(\theta)=J'_{\nu^\star}(\theta)+\langle \phi(\theta),M_{\nabla R,1}(f^\star,f_\nu)\rangle$, we get
\[
\int_{\Theta_0} \Vert\bar \nabla J'_\nu(\theta) \Vert^2\d\nu(\theta) \geq  C \bar r_0^2 \tau^4 + O(W_\tau(\nu)^3).
\]
The result follows by collecting all the estimates above.
\end{proof}

\subsection{Proof of the sharpness inequality (Theorem~\ref{th:gradientinequality})}\label{app:conclusionlocal}
By Proposition~\ref{prop:localexpansion} we have that for $\tau>0$ small enough
\[
 J(\nu)-J^\star \leq CW_\tau(\nu)^2 + O(W_\tau(\nu)^3)
\]
where $C = \sigma_{\max}(K+H) +\Vert J'_{\nu^\star}\Vert_\infty$.

Similarly, by Proposition~\ref{prop:gradientestimate}, for $\tau$ small enough, it holds
\[
\Vert g_\nu\Vert^2_{L^2(\nu)}  \geq C'W_\tau(\nu)^2 +O(W_\tau(\nu)^3)
\]
where $C'= \frac18\sigma_{\min}(H)^2\tau^4$. Now fix $\tau>0$ satisfying the hypothesis of Lemma~\ref{lem:distancebound} and the two previous inequalities. By Lemma~\ref{lem:distancebound},  $W_\tau(\nu) = O((J(\nu)-J^\star)^\frac12)$. We deduce that there exists $J_0>J^\star$ and $\kappa_0>0$ , such that whenever  $\nu\in \Mm_+(\Theta)$ satisfies $J(\nu)<J_0$, one has
\[
\Vert g_\nu\Vert^2_{L^2(\nu)}\geq \kappa_0(J(\nu)-J^\star).
\]
Finally, notice that if different metric factors $(\alpha,\beta)\neq (1,1)$ are introduced, one can always lower bound the new gradient squared norm as
\[
\int_\Theta \left( 4 \alpha \vert J'_\nu(\theta)\vert^2 +\beta \Vert \nabla J'_\nu(\theta)\Vert^2_\theta \right)\d\nu(\theta)
\geq \min\{\alpha,\beta\} \int_\Theta \left( 4 \vert J'_\nu(\theta)\vert^2 + \Vert \nabla J'_\nu(\theta)\Vert^2_\theta \right)
 \]
 which proves the statement for any $(\alpha,\beta)$. Note however that if one wants to make a more quantitative bound, then there are values $(\alpha_0,\beta_0)$ that would lead to a better conditioning and potentially higher values for $J_0$. In this case, the factor appearing in the sharpness inequality should rather be $\min\{\alpha/\alpha_0,\beta/\beta_0\}$.

\section{Estimation of the mirror rate function}\label{app:mirror}
We provide an upper bound for the  \emph{mirror rate} function $\Qq$ in the situation that is of interest to us, with $\nu^\star$ sparse. Note that this approach could be generalized to arbitrary $\nu^\star$.
\begin{lemma}\label{lem:qdensity}
Under {\sf (A1)}, there exists $C_{\Theta}> 0$ that only depends on the curvature of $\Theta$, such that for all $\nu^\star ,\nu_0\in \Mm_+(\Theta)$ where $\nu^\star=\sum_{i=1}^{m^\star} r_i^2\delta_{\theta_i}$ and $\nu_0=\rho \vol$ where $\log \rho$ is $L$-Lipschitz, then 
\[
\Qq_{\nu^\star,\nu_0}(\tau)\leq \frac1{\tau} \left( \nu^\star(\Theta)\cdot d\cdot \big(C_\Theta +\log(\tau)+L/\tau\big) +\nu_0(\Theta)- \nu^\star(\Theta)+ \sum_{i=1}^{m^\star} r_i^2 \log\left(\frac{r_i^2}{\rho(\theta_i)}\right) \right).
\]
Moreover, for any other $\hat \nu_0\in \Mm_+(\Theta)$, it holds
$
\Qq_{\nu^\star,\hat \nu_0}(\tau)\leq \Qq_{\nu^\star,\nu_0}(\tau) + \nu^\star(\Theta)\cdot W_\infty(\nu_0,\hat \nu_0).
$
\end{lemma}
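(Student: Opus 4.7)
My plan is to prove the two inequalities separately by exhibiting an explicit competitor for the first, and by a transport argument for the second.

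For the first inequality, the idea is to upper bound the infimum defining $\Qq_{\nu^\star,\nu_0}(\tau)$ by its value at an explicit measure that smears each atom of $\nu^\star$ onto a small geodesic ball. Fix $r>0$ small enough so that the balls $B(\theta_i,r)$ are pairwise disjoint and each lies in a normal neighborhood of $\theta_i$, and consider
\[
\nu_r := \sum_{i=1}^{m^\star} \frac{r_i^2}{\vol(B(\theta_i,r))}\ones_{B(\theta_i,r)}\,\vol,
\]
which has the same total mass as $\nu^\star$. For the BL distance, any $\psi$ with $\Vert\psi\Vert_{\BL}\leq 1$ is $1$-Lipschitz, so
\[
\int\psi\,\d(\nu_r-\nu^\star) = \sum_i r_i^2\!\left(\fint_{B(\theta_i,r)}\psi\,\d\vol-\psi(\theta_i)\right)\leq r\,\nu^\star(\Theta).
\]
For the relative entropy, the density of $\nu_r$ with respect to $\nu_0=\rho\vol$ on $B(\theta_i,r)$ is $r_i^2/(\rho\cdot\vol(B(\theta_i,r)))$, so
\[
\int\log\frac{\d\nu_r}{\d\nu_0}\,\d\nu_r = \sum_i r_i^2\!\left(\log\frac{r_i^2}{\vol(B(\theta_i,r))} - \fint_{B(\theta_i,r)}\log\rho\,\d\vol\right).
\]
The $L$-Lipschitz control $|\log\rho(\theta)-\log\rho(\theta_i)|\leq Lr$ together with a volume comparison estimate $\vol(B(\theta_i,r))\geq e^{-C_\Theta}\,r^d$ valid for $r$ small enough (with $C_\Theta$ depending only on curvature bounds of $\Theta$) give
\[
\int\log\frac{\d\nu_r}{\d\nu_0}\,\d\nu_r \leq \sum_i r_i^2\log\frac{r_i^2}{\rho(\theta_i)} + \nu^\star(\Theta)\cdot(d\log(1/r) + Lr + C_\Theta).
\]
Adding the remaining piece $\nu_0(\Theta)-\nu^\star(\Theta)$ of $\Hh(\nu_r,\nu_0)$ and selecting $r=1/\tau$ produces exactly the right functional form; the leftover $\nu^\star(\Theta)/\tau$ coming from the BL term is absorbed into $C_\Theta$, and using $d\geq 1$ to uniformly multiply by $d$ gives the stated upper bound.

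For the second inequality the plan is to transport candidates via an optimal $W_\infty$ plan. Let $\pi$ be a coupling of $\nu_0$ and $\hat\nu_0$ realizing $W_\infty(\nu_0,\hat\nu_0)$, so that $\dist(x,y)\leq W_\infty(\nu_0,\hat\nu_0)$ on $\spt\pi$. Given any candidate $\nu$ with $\Hh(\nu,\nu_0)<\infty$ and density $f=\d\nu/\d\nu_0$, define $\hat\nu$ by $\int\psi\,\d\hat\nu := \int\psi(y)f(x)\,\d\pi(x,y)$. Then $\hat\nu$ has density $\hat f(y)=\mathbb{E}_\pi[f(x)\mid y]$ with respect to $\hat\nu_0$ and the same total mass as $\nu$; by Jensen applied to the convex map $u\mapsto u\log u$ conditionally on $y$, $\Hh(\hat\nu,\hat\nu_0)\leq \Hh(\nu,\nu_0)$. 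Meanwhile, for $\Vert\psi\Vert_{\BL}\leq 1$,
\[
\int\psi\,\d(\hat\nu-\nu)=\int(\psi(y)-\psi(x))f(x)\,\d\pi(x,y)\leq W_\infty(\nu_0,\hat\nu_0)\cdot\nu(\Theta).
\]
Combining via the triangle inequality and restricting the infimum to the mass-preserving competitors built above (for which $\nu(\Theta)=\nu^\star(\Theta)$ exactly) yields the desired inequality.

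The main technical hurdle is the volume lower bound $\vol(B(\theta_i,r))\geq e^{-C_\Theta}r^d$ with $C_\Theta$ depending only on a curvature bound of $\Theta$; this is essentially Bishop--Gromov comparison, but needs some care to formulate uniformly over $\theta_i$. Once this is established, the rest is bookkeeping and Jensen's inequality, with the tradeoff $r=1/\tau$ chosen to match the logarithmic dependence on $\tau$ in the stated bound.
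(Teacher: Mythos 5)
Your proposal is correct and follows essentially the same strategy as the paper: smear each atom of $\nu^\star$ onto a small geodesic ball to build an explicit competitor, estimate the BL distance by transporting the ball to its center and the entropy via the $L$-Lipschitz control on $\log\rho$ plus a volume comparison bound, then set the radius to match the $1/\tau$ weight in the infimum. The paper optimizes the radius exactly ($\epsilon = d/(C\tau)$) whereas you fix $r=1/\tau$ and absorb the slack using $d\geq 1$; this is a harmless presentational difference. For the second inequality the paper works with an optimal $W_\infty$ transport map and invokes monotonicity of relative entropy under pushforwards, while you use an optimal coupling plus Jensen's inequality on the conditional expectation of the density --- these are the same fact (entropy decrease under coarse-graining) stated in slightly different language, and your version is arguably a bit more explicit about why the mass factor is $\nu^\star(\Theta)$ (by restricting to mass-preserving competitors), a point the paper glosses over.
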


In the context of Lemma~\ref{lem:qdensity}, we introduce the quantity,
\[
\bar \Hh(\nu^\star,\rho) \coloneqq \sum_{i=1}^{m^\star} r_i^2 \log\left(\frac{r_i^2}{\rho(\theta_i)}\right) -  \nu^\star(\Theta) + \nu_0(\Theta).
\]
which measures how much $\rho$ is a good prior for the (a priori unknown) minimizer $\nu^\star$. With this quantity, the conclusion of Lemma~\ref{lem:qdensity} reads, for $\tau\geq L$,
\[
\Qq_{\nu^\star,\nu_0}(\tau)\leq \frac{\bar \Hh(\nu^\star,\rho) + \nu^\star(\Theta)\cdot d\cdot (\log(\tau)+C_\Theta)}{\tau}.
\]

\begin{proof}
Let us build $\nu_\epsilon$ in such a way that the quantity defining $\Qq_{\nu^\star,\nu_0}(\tau)$ in Eq.~\eqref{eq:quantization} is small. For this, consider a radius $\epsilon>0$ and consider the measure $\nu_\epsilon$ defined as the normalized volume measure on each geodesic ball of radius $\tau$ around each $\theta_i$, with mass $r_i^2$ on this ball, and vanishing everywhere else. Using the transport map that maps these balls to their centers $\theta_i$, we get if $\Theta$ is flat,
\[
\Vert \nu_\epsilon -\nu^\star\Vert_\BL^* \leq W_1(\nu_\epsilon,\nu^\star)  \leq \sum_{i=1}^{m^\star} \frac{r_i^2}{V^{(d)}(\epsilon)} \int_0^\epsilon s \frac{d}{ds}V^{(d)}(s)\d s
\]
where $V^{(d)}(\epsilon)$ is the volume of a ball of radius $\epsilon$ in $\RR^d$, that scales as $\epsilon^d$. Using an integration by parts, it follows
\[
\frac{1}{V^{(d)}(\epsilon)} \int_0^\epsilon s \frac{d}{ds}V^{(d)}(s)\d s = \epsilon - \int_0^\epsilon \frac{V^{(d)}(s)}{V^{(d)}(\epsilon)}\d s = \epsilon -\int_0^\epsilon\left(\frac{s}{\epsilon}\right)^d\d s = \frac{\epsilon d}{d+1},
\]
thus $W_1(\nu_\epsilon ,\nu^\star)  \leq \nu^\star(\Theta)\epsilon$. In the general case where $\Theta$ is a potentially curved manifold, this upper bound also depends on the curvature of $\Theta$ around each $\theta_i$, a dependency that we hide in the multiplicative constant so $W_1(\nu_\epsilon ,\nu^\star)  \leq C\nu^\star(\Theta)\epsilon$. Let us now control the entropy term. Writing $\rho_\epsilon =\d\nu_\epsilon/\d{\vol}$ and $\Theta_i$ for the geodesic ball of radius $\epsilon$ around $\theta_i$, it holds
\begin{align*}
\Hh(\nu_\epsilon,\nu_0) & = \nu_0(\Theta) -\nu^\star(\Theta) + \sum_{i=1}^{m^\star} \int_{\Theta_i} \rho_\epsilon(\theta)\log (\rho_\epsilon(\theta)/\rho(\theta))\d{\vol}(\theta).
\end{align*}
The integral term can be estimated as follows,
\begin{align*}
\int_{\Theta_i} \rho_\epsilon(\theta)  \log (\rho_\epsilon(\theta)/\rho(\theta))\d{\vol}(\theta) &= \int_{\Theta_i} \frac{r_i^2}{V^{(d)}(\epsilon)} \left(\log (r_i^2)-\log V^{(d)}(\epsilon) -\log(\rho(\theta))\right)\d{\vol}(\theta)\\
& \leq r_i^2\left(\log(r_i^2)-\log V^{(d)}(\epsilon) -\log \rho(\theta_i) +\Lip(\log \rho)\cdot \epsilon \right).
\end{align*}
Recalling that $-\log V^{(d)}(\epsilon) \leq -d\log(\epsilon)+C$ for some $C$ that only depends on the curvature of $\Theta$, we get that the right-hand side of~\eqref{eq:quantization} is bounded by
\begin{align*}
 \frac1{\tau} \left(C\nu^\star(\Theta)+\nu_0(\Theta) -\nu^\star(\Theta) d\log(\epsilon) + \sum_{i=1}^{m^\star} r_i^2 \log\left(\frac{r_i^2}{\rho(\theta_i)}\right) + \epsilon L\nu^\star(\Theta)\right) + C\nu^\star(\Theta)\epsilon.
\end{align*}
Let us fix $\epsilon>0$ by minimizing $C\nu^\star(\Theta)\epsilon -\nu^\star(\Theta)d\log(\epsilon)/\tau$, which gives $\epsilon = d/(C\tau)$. The first claim follows by plugging this value for $\epsilon$ in the expression above.

For the second claim of the statement, let us build a suitable candidate $\hat \nu_\epsilon$ in order to upper bound the infimum that defines $\Qq_{\nu^\star,\hat \nu_0}(\tau)$. Let $T$ be an optimal transport map from $\nu_0$ to $\hat \nu_0$ for $W_\infty$, i.e.\ a measurable map $T:\Theta\to\Theta$ satisfying $T_\# \nu_0 = \hat \nu_0$ and $\max\{\dist(\theta,T(\theta))\;;\; \theta\in \spt \nu_0(=\Theta)\} = W_\infty(\nu_0,\hat \nu_0)$ (see~\cite[Sec. 3.2]{santambrogio2015optimal}, the absolute continuity of $\nu_0$ is sufficient for such a map to exist). Now we define $\hat \nu_\epsilon = T_\# \nu_\epsilon$ where $\nu_\epsilon$ is such that $\Hh(\nu_\epsilon,\nu_0)<\infty$. Since the relative entropy is non-increasing under pushforwards, it holds $\Hh(\hat \nu_\epsilon,\hat \nu_0)\leq \Hh(\nu_\epsilon,\nu_0)$. Moreover, it holds $\Vert \nu_\epsilon -\hat \nu_\epsilon\Vert_{\BL}^*\leq W_1(\nu_\epsilon,\hat \nu_\epsilon)\leq \nu^\star(\Theta) W_\infty(\nu_\epsilon,\hat \nu_\epsilon)$. Thus we have
\begin{align*}
\Qq_{\nu^\star,\hat \nu_0}(\tau) &\leq \Vert \nu^\star - \hat \nu_\epsilon\Vert_{\BL}^* +\frac1\tau \Hh(\hat \nu_\epsilon,\hat \nu_0) \\
&\leq \Vert \nu_\epsilon -\hat \nu_\epsilon\Vert_{\BL}+ \Vert \nu^\star - \nu_\epsilon\Vert_{\BL} +\frac1\tau \Hh(\nu_\epsilon,\nu_0)\\
&\leq \nu^\star(\Theta) W_\infty(\nu_\epsilon,\hat \nu_\epsilon) + \Vert \nu^\star - \nu_\epsilon\Vert_{\BL}^* +\frac1\tau \Hh(\nu_\epsilon,\nu_0) .
\end{align*}
The claim follows by noticing that,  by construction, $W_\infty(\nu_\epsilon,\hat \nu_\epsilon) \leq W_\infty(\nu_0,\hat \nu_0)$ and then by taking the infimum in $\nu_\epsilon$.
\end{proof}

\section{Global convergence for gradient descent}\label{app:gradientdescent}

In the following, result, we study the non-convex gradient descent updates $\mu_{k+1} = (T_k)_\# \mu_k$ and $\nu_k=\h\mu_k$ where
\[
T_k(r,\theta) = \Ret_{(r,\theta)}(-2 \alpha_k J'_{\nu_k}(\theta), -\beta_k \nabla J'_{\nu_k}(\theta))
\]
with step-sizes $\alpha, \beta>0$. When $\beta=0$, we recover mirror descent updates in $\Mm_+(\Theta)$ with the entropy mirror map (more specifically, this is true when $\Ret$ is the ``mirror'' retraction defined in Section~\ref{sec:gradientdescent}).

\begin{lemma}\label{lem:discretemirrordecrease}
Assume ${\sf (A1-3)}$ and that $J$ admits a minimizer $\nu^\star\in \Mm_+(\Theta)$. Then there exists $C,\eta_{\max}>0$  such that for all $\nu_0\in \Mm_+(\Theta)$, denoting $B = \sup_{J(\nu)\leq J(\nu_0)} \Vert J'_{\nu}\Vert_{\BL}$, if $\max\{\alpha,\beta\}<\beta_{\max}$, it holds
\begin{align*}
J(\nu_k)-J^\star \leq \inf_{k' \in [0,k]}\left( B\cdot \Qq_{\nu^\star,\nu_0}(4B \alpha k ) +  C\alpha + \beta B^2k \right).
\end{align*}
\end{lemma}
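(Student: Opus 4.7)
The plan is to transpose the gradient-flow proof of Lemma~\ref{lem:mirrordecrease} into the discrete setting, viewing each iteration as one step of mirror descent on the masses (via the vertical component of the retraction) coupled with a Wasserstein-type transport of the positions. Throughout, I would exploit the decomposition $\nu_{k+1}=(T_k^\theta)_\#((T_k^r)^2\nu_k)$ provided by Proposition~\ref{prop:projectedGD} and the descent property of Lemma~\ref{lem:sufficientdescrease} to control discretization error.

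First, I would introduce a companion sequence $\nu^\epsilon_k$ obtained by applying \emph{only} the spatial part of the update, i.e.\ $\nu^\epsilon_{k+1}\coloneqq(T_k^\theta)_\#\nu^\epsilon_k$, starting from a measure $\nu^\epsilon_0$ with $\Hh(\nu^\epsilon_0,\nu_0)<\infty$. Since $T^\theta_k$ is a diffeomorphism for $\beta$ small enough and the relative entropy is invariant under such pushforwards,
\[
\Hh(\nu^\epsilon_{k+1},\nu_{k+1})=\Hh(\nu^\epsilon_k,(T_k^r)^2\nu_k)=\Hh(\nu^\epsilon_k,\nu_k)-2\!\int\!\log T_k^r\,\d\nu^\epsilon_k+\int\bigl((T_k^r)^2-1\bigr)\d\nu_k.
\]
Taylor expanding $T_k^r=1-2\alpha J'_{\nu_k}+O(\alpha^2(J'_{\nu_k})^2)$ (exact after composition with $\exp$ for the mirror retraction), the two integrals on the right combine into $4\alpha\int J'_{\nu_k}\d(\nu^\epsilon_k-\nu_k)+e_k$, with an error $e_k=O\bigl(\alpha^2\!\int(J'_{\nu_k})^2\d\nu_k\bigr)$ uniformly bounded by $C\alpha^2B^2M$, where $M\coloneqq\sup_k\nu_k(\Theta)<\infty$ by the descent property.

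Second, I would apply convexity of $J$ and duality of the bounded-Lipschitz norm exactly as in the flow proof:
\[
\int J'_{\nu_k}\d(\nu^\epsilon_k-\nu_k)\leq -(J(\nu_k)-J^\star)+B\,\Vert\nu^\star-\nu^\epsilon_k\Vert^*_{\BL}.
\]
Telescoping the entropy inequality over $k=0,\dots,k'-1$, using $\Hh\geq 0$ to drop the terminal entropy, and dividing by $4\alpha k'$ yields a bound on the running average
\[
\bar J_{k'}-J^\star\leq \tfrac{1}{4\alpha k'}\Hh(\nu^\epsilon_0,\nu_0)+\tfrac{B}{k'}\textstyle\sum_{k=0}^{k'-1}\Vert\nu^\star-\nu^\epsilon_k\Vert^*_{\BL}+C\alpha.
\]
The spatial drift is controlled by the triangle inequality and the per-step estimate $\Vert\nu^\epsilon_{k+1}-\nu^\epsilon_k\Vert^*_{\BL}\leq \beta B\,\nu^\epsilon_0(\Theta)$ (since $T_k^\theta$ displaces mass by at most $\beta\Vert\nabla J'_{\nu_k}\Vert\leq\beta B$ and preserves total mass), which after averaging contributes the $\beta B^2 k'$ term. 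Finally, the descent property gives $J(\nu_k)\leq J(\nu_{k'-1})\leq \bar J_{k'}$ for every $k'\leq k$, so the same bound applies to $J(\nu_k)-J^\star$; taking the infimum over admissible $\nu^\epsilon_0\in\Mm_+(\Theta)$ converts the first two terms into $B\cdot\Qq_{\nu^\star,\nu_0}(4\alpha B k')$, and taking the infimum over $k'\in[0,k]$ yields the claimed estimate.

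The main obstacle will be the uniform control of the discretization error $e_k$, which does not appear in the continuous-time argument. Two ingredients will be essential here: the assumption $\max\{\alpha,\beta\}<\eta_{\max}$ ensures that the Taylor expansion of the retraction is accurate along the iterates (and remains valid across the different compatible retractions from Section~\ref{sec:gradientdescent}), while the descent property of Lemma~\ref{lem:sufficientdescrease} ensures the iterates stay in a sublevel set where $J'_{\nu_k}$ is uniformly bounded, bounding $e_k$ by $C\alpha^2$ per step and yielding the averaged $C\alpha$ correction. A secondary but more routine step will be the justification that $T_k^\theta$ is indeed a diffeomorphism whose pushforward preserves relative entropy, which again reduces to smallness of $\beta$ via Property~(i) of Definition~\ref{def:compatibleretraction}.
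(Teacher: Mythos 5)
Your proposal reproduces the paper's proof essentially step for step: same companion sequence $\nu^\epsilon_{k+1}=(T^\theta_k)_\#\nu^\epsilon_k$, same use of entropy invariance under the spatial diffeomorphism, same Taylor expansion of $T^r_k$ producing the $O(\alpha)$ correction, same convexity/$\BL$-duality estimate, same telescoping and Ces\`aro-to-last-iterate step via Lemma~\ref{lem:sufficientdescrease}, and the same final infima. The only point worth noting is that you correctly read the lemma's infimum as ranging over a $k'$ that should appear inside the bound (in $\Qq_{\nu^\star,\nu_0}(4B\alpha k')$ and $\beta B^2 k'$), which is evidently a typo in the statement as printed.
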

\begin{proof}
As in the proof of Lemma~\ref{lem:sufficientdescrease}, we define $(T_k^r(\theta),T_k^\theta(\theta)) \coloneqq T_k(1,\theta)$ and we define recursively $\nu^\epsilon_{k+1}=(T^\theta_k)_\# \nu^\epsilon_k$ where $\nu^\epsilon_0$ is such that $\Hh(\nu^\epsilon_0,\nu_0)<\infty$. Using the invariance of the relative entropy under diffeomorphisms (indeed, $T^\theta_k$ is a diffeomorphism of $\Theta$ for $\beta$ small enough), and doing a first order expansion of $T^r_k = 1 -2\alpha J'_{\nu_k} +O(\alpha^2)$ it holds for $\beta$ small enough
\begin{align*}
\frac1{4\alpha} \left(\Hh(\nu^\epsilon_{k+1},\nu_{k+1})-\Hh(\nu^\epsilon_{k},\nu_k)\right) & =\frac1{4\alpha} (\Hh(\nu^\epsilon_k,(T^r_k)^2\nu_k)-\Hh(\nu^\epsilon_k,\nu_k)) \\
& = \frac{1}{4\alpha}\Big( \int \log((T_k^r)^{-2})\d\nu_k^\epsilon +\int ((T_k^r)^2-1)\d\nu_k \Big) \\
&= \int J'_{\nu_k}\cdot \d(\nu^\epsilon_k-\nu_k) + O(\alpha)\\
&=\int J'_{\nu_k}\d(\nu^\star -\nu_k) + \int J'_{\nu_k}\d (\nu^\epsilon_k - \nu^\star) + O(\alpha)\\
&\leq -(J(\nu_k)-J^\star)+\Vert J'_{\nu_k}\Vert_{\BL}\cdot \Vert\nu^\star-\nu^\epsilon_k\Vert_{\BL}^* + C\alpha 
\end{align*}
where the term in $O(\alpha)$ originates from a first order approximation of the retraction. Now, taking $\max\{\alpha,\beta\}$ small enough to ensure decrease of $(J(\nu_k))_k$ (by Lemma~\ref{lem:sufficientdescrease}) so that $C$ above can be chosen independently of $k$, it follows
\begin{align*}
\left(\frac1k \sum_{k'=0}^{k-1} J(\nu_{k'})\right) - J^\star 
&\leq  \frac1{4\alpha k}\Hh(\nu^\epsilon_0,\nu_0) + B\Vert \nu^\star - \nu^\epsilon_0 \Vert_{\BL}^* + \frac{B}{k}\left( \sum_{k'=0}^{k-1} \Vert \nu_{k'}^\epsilon -\nu_0^\epsilon\Vert \right)+ C\alpha \\
&\leq B \Qq_{\nu^\star,\nu_0}(4B \alpha k ) +  C\alpha + \frac12 {\beta B^2(k-1)}
\end{align*}
by bounding each term $\Vert \nu_{k'}^\epsilon -\nu_0^\epsilon\Vert$ by $B\beta k'$.
\end{proof}

\begin{proof}[Proof of Theorem~\ref{th:convergencedescent} (gradient descent)]
The proof follows closely that of Theorem~\ref{thm:convergenceflow} but we do not track the ``constants'' (this would be more tedious). By Lemma~\ref{lem:qdensity}, there exists $C>0$ (that depends on $\bar \Hh$, the curvature of $\Theta$ and $\nu^\star(\Theta)$) such that $\Qq_{\nu^\star,\hat \nu_0}(\tau)\leq C(\log \tau)/\tau +\nu^\star(\Theta)W_\infty(\nu_0,\hat \nu_0)$. Combining this with Lemma~\ref{lem:discretemirrordecrease}, we get that when $\max\{\alpha,\beta\} \leq \eta_{\max}$,
\[
J(\nu_k)-J^\star \leq C\frac{\log(B \alpha k)}{\alpha k}  + \beta B^2k+ C'\alpha + B \nu^\star(\Theta)\cdot W_\infty(\nu_0,\hat \nu_0).
\]
Our goal is to choose $k_0,\alpha, \beta$ and $W_\infty(\nu_0,\hat \nu_0)$ so that this is quantity smaller than $\Delta_0\coloneqq J_0-J^\star$. With $\alpha=1/\sqrt{k}$ and $\beta = \beta_0/k$ we get
\[
J(\nu_k)-J^\star \leq \frac{C'\log(Bk)}{\sqrt{k}} + B^2\beta_0+ B \nu^\star(\Theta) \cdot W_\infty(\nu_0,\hat \nu_0).
\]
Then, using a bound $\log(u)\leq C_\epsilon u^\epsilon$, we may choose $k \gtrsim \Delta_0^{-2-\epsilon}$, $\beta_0 \leq\frac13 \Delta_0/B^2$ and $W_\infty(\nu_0,\hat \nu_0)\leq\frac13 \Delta_0/(B\nu^\star(\Theta))$ in order to have $J(\nu_k)-J^\star \leq \Delta_0$. This gives $\alpha \lesssim \Delta_0^{1+\epsilon/2}$, $\beta \lesssim \Delta_0^{3+\epsilon}$ and the regime of exponential convergence kicks off after $k=\Delta_0^{-2-\epsilon}$ iterations.
\end{proof}

\section{Faster rate for mirror descent}\label{app:fasterrate}
In this section, we show that for a specific choice of retraction, the convergence rate of $O(\log(t)/t)$ for the gradient flow is preserved for the gradient descent.

\begin{proposition}[Mirror flow, fast rate]\label{prop:mirrorflow}
Assume {\sf (A1-4)} and consider the infinite dimensional mirror descent update 
\[
\nu_{k+1}=\exp(-4\alpha J'_{\nu_k})\nu_{k}
\]
which corresponds to the so-called mirror retraction in Section~\ref{sec:gradientdescent} and $\beta=0$. For any $\nu_0 \in \Mm_+(\Theta)$, there exists $\alpha_{\max}>0$ such that for $\alpha\leq \alpha_{\max}$ it holds, denoting $B_{\nu_0} = \sup_{J(\nu)\leq J(\nu_0)} \Vert J'_{\nu}\Vert_{\BL}$,
\[
J(\nu_k) - J^\star \leq B_{\nu_0}  \Qq_{\nu^\star,\nu_0}(2\alpha B_{\nu_0}  k).
\]
In particular, combining with Lemma~\ref{lem:qdensity}, if $\nu_0 = \rho\!\vol$ has a smooth positive density, then  $J(\nu_k)-J^\star = O(\log(k)/k)$. 
\end{proposition}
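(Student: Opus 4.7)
The plan is to adapt the entropy-decrease argument of Lemma~\ref{lem:mirrordecrease} to the discrete dynamics, using the specific algebraic structure of the mirror retraction ($\d\nu_{k+1}/\d\nu_k = \exp(-4\alpha J'_{\nu_k})$) to avoid the additive $O(\alpha)$ discretization error that appears in the general Lemma~\ref{lem:discretemirrordecrease}. The key observation is that the remainder produced when Taylor-expanding the exponential is quadratic in $J'_{\nu_k}$, and hence controlled by the descent Lemma~\ref{lem:sufficientdescrease}; this remainder can then be telescoped away instead of accumulating.

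First I would derive a one-step entropy identity: for any auxiliary $\nu^\epsilon \in \Mm_+(\Theta)$ with $\Hh(\nu^\epsilon,\nu_0)<\infty$, the update gives
\[
\Hh(\nu^\epsilon,\nu_{k+1}) - \Hh(\nu^\epsilon,\nu_k) = 4\alpha \int J'_{\nu_k}\d\nu^\epsilon + \int (e^{-4\alpha J'_{\nu_k}}-1)\d\nu_k.
\]
By Lemma~\ref{lem:sufficientdescrease}, for $\alpha\leq \eta_{\max}$ the iterates remain in the sublevel $\{J\leq J(\nu_0)\}$, so $\|J'_{\nu_k}\|_\infty\leq B_{\nu_0}$. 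The elementary bound $e^{-x}\leq 1-x+\tfrac{x^2}{2}e^{|x|}$ then yields
\[
\int (e^{-4\alpha J'_{\nu_k}}-1)\d\nu_k \leq -4\alpha \int J'_{\nu_k}\d\nu_k + 8\alpha^2 e^{4\alpha B_{\nu_0}} \int (J'_{\nu_k})^2 \d\nu_k.
\]
Dividing by $4\alpha$ and invoking the descent inequality $\|g_{\nu_k}\|^2_{L^2(\nu_k)} = 4\alpha\int (J'_{\nu_k})^2\d\nu_k \leq 2[J(\nu_k)-J(\nu_{k+1})]$ (Lemma~\ref{lem:sufficientdescrease} with $\beta=0$) gives
\[
\frac{1}{4\alpha}[\Hh(\nu^\epsilon,\nu_{k+1})-\Hh(\nu^\epsilon,\nu_k)] \leq \int J'_{\nu_k}\d(\nu^\epsilon-\nu_k) + e^{4\alpha B_{\nu_0}}[J(\nu_k)-J(\nu_{k+1})].
\]

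Second, I apply convexity of $J$ and the duality of the bounded-Lipschitz norm to split
\[
\int J'_{\nu_k}\d(\nu^\epsilon-\nu_k) \leq -(J(\nu_k)-J^\star) + B_{\nu_0}\|\nu^\epsilon-\nu^\star\|_{\BL}^*.
\]
Telescoping the one-step inequality over $k'=0,\dots,k-1$, and using $\Hh\geq 0$ together with monotonicity $J(\nu_k)\leq J(\nu_{k'})$ (from descent), we obtain
\[
k\,(J(\nu_k)-J^\star) \leq \frac{1}{4\alpha}\Hh(\nu^\epsilon,\nu_0) + k\,B_{\nu_0}\|\nu^\epsilon-\nu^\star\|_{\BL}^* + e^{4\alpha B_{\nu_0}}[J(\nu_0)-J^\star].
\]
Dividing by $k$ and taking the infimum over $\nu^\epsilon$ gives $J(\nu_k)-J^\star \leq B_{\nu_0}\Qq_{\nu^\star,\nu_0}(4\alpha B_{\nu_0} k) + k^{-1}e^{4\alpha B_{\nu_0}}(J(\nu_0)-J^\star)$, which implies the stated bound (the $\Qq$ term dominates the $1/k$ remainder, and $\Qq$ is decreasing so $\Qq(4\alpha B_{\nu_0} k)\leq \Qq(2\alpha B_{\nu_0} k)$). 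Plugging into Lemma~\ref{lem:qdensity} with a smooth positive density $\rho$ gives $\Qq(4\alpha B_{\nu_0} k) = O(\log(k)/k)$, hence the final rate.

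The main difficulty is ensuring that the exponential Taylor remainder is genuinely absorbed by the telescoping sum rather than accumulating to $O(k\alpha^2)$: this requires the quadratic-in-$J'_{\nu_k}$ structure of the remainder to match the form of the descent inequality exactly, which happens precisely because the mirror retraction makes the update $\nu_{k+1}/\nu_k$ an exponential of the gradient. Any other retraction (e.g.\ canonical) produces a remainder that cannot be controlled this way, which is why the faster rate is specific to the mirror case.
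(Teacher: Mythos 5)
Your approach starts from the same Bregman identity as the paper's proof, but handles the terms differently, and this leaves a gap at the last step. The paper keeps the non-negative Bregman term $\Hh(\nu_{k+1},\nu_k)$ on the right-hand side and lower-bounds it by $4\alpha^2\int |J'_{\nu_k}|^2\d\nu_k$ (using $e^u\ge 1+u$); this positive surplus \emph{cancels} the $O(\alpha\Vert g_{\nu_k}\Vert^2)$ error arising from the first-order Taylor expansion of $J$, so that for $\alpha$ small enough the one-step inequality
\[
\frac1{4\alpha}\bigl(\Hh(\nu_\epsilon,\nu_k)-\Hh(\nu_\epsilon,\nu_{k+1})\bigr)+B_{\nu_0}\Vert\nu^\star-\nu_\epsilon\Vert_\BL^*\ \ge\ J(\nu_{k+1})-J^\star
\]
holds with \emph{no} additive remainder, and telescoping gives exactly $J(\nu_k)-J^\star\le B_{\nu_0}\Qq_{\nu^\star,\nu_0}(4\alpha B_{\nu_0}k)$, which is even stronger than the stated $\Qq(2\alpha B_{\nu_0}k)$.

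You instead Taylor-expand $e^{-4\alpha J'_{\nu_k}}$, bound the quadratic remainder by the descent lemma, and telescope. This is a legitimate alternative, and your observation that the remainder telescopes to a constant $e^{4\alpha B_{\nu_0}}(J(\nu_0)-J^\star)$ rather than accumulating is correct and is what rescues the $1/k$ rate. However, this leaves you with
\[
J(\nu_k)-J^\star\ \le\ B_{\nu_0}\Qq_{\nu^\star,\nu_0}(4\alpha B_{\nu_0}k)+\frac{e^{4\alpha B_{\nu_0}}(J(\nu_0)-J^\star)}{k},
\]
and the final step — ``which implies the stated bound (the $\Qq$ term dominates the $1/k$ remainder, and $\Qq$ is decreasing so $\Qq(4\alpha B_{\nu_0}k)\le\Qq(2\alpha B_{\nu_0}k)$)'' — does not close the gap. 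The monotonicity of $\Qq$ lets you upgrade $\Qq(4\alpha B_{\nu_0}k)$ to $\Qq(2\alpha B_{\nu_0}k)$, but you then need to absorb the extra $O(1/k)$ term into the \emph{difference} $\Qq(2\alpha B_{\nu_0}k)-\Qq(4\alpha B_{\nu_0}k)$, and there is no general lower bound on that gap that would make this work for all $k$. The asymptotic conclusion $J(\nu_k)-J^\star=O(\log(k)/k)$ does follow from your bound (since your additive remainder is $O(1/k)=o(\log(k)/k)$), but the displayed non-asymptotic inequality of the proposition is not established as written. The fix is to follow the paper's bookkeeping: instead of discarding $\Hh(\nu_{k+1},\nu_k)$ via a Taylor bound, keep it as a positive term and use it to cancel the discretization error rather than merely bound it.
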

\begin{proof}
Consider $\nu_\epsilon \in \Mm_+(\Theta)$ such that $\Hh(\nu_\epsilon,\nu_0)<\infty$. It holds
\begin{align*}
\frac1{4\alpha}\left( \Hh(\nu_\epsilon,\nu_k)-\Hh(\nu_\epsilon,\nu_{t+1})\right) &= -  \frac1{4\alpha} \int \log\left( \frac{\nu_{k+1}}{\nu_k}\right)\d(\nu_{k+1}-\nu_\epsilon)+\frac{1}{4\alpha} \Hh(\nu_{k+1},\nu_k)\\
&=\int J'_{\nu_k}\d(\nu_{k+1}-\nu_\epsilon)+\frac{1}{4\alpha} \Hh(\nu_{k+1},\nu_k)
\end{align*}
where the first equality is obtained by rearranging terms in the definition of $\Hh$, and the second one is specific to the mirror retraction.  Let us estimate the two terms in the right-hand side. Using convexity inequalities, we get
\begin{align*}
\int J'_{\nu_k}\d(\nu_{k+1}-\nu_\epsilon) &=  \int J'_{\nu_k}\d(\nu_{k}-\nu^\star) + \int J'_{\nu_k}\d(\nu_{k+1}-\nu_k)+ \int J'_{\nu_k}\d(\nu^\star-\nu_\epsilon) \\
& \geq  J(\nu_k)-J(\nu^\star)+J(\nu_{k+1})-J(\nu_k) + O(\alpha \Vert g_{\nu_k}\Vert^2_{L^2(\nu_k)})+ \int J'_{\nu_k}\d(\nu^\star-\nu_\epsilon) \\
& \geq J(\nu_{k+1})-J^\star + O(\alpha \Vert g_{\nu_k}\Vert^2_{L^2(\nu_k)}) - \Vert J'_{\nu_k}\Vert_\BL \cdot \Vert \nu^\star-\nu_\epsilon\Vert_\BL^*.
\end{align*}
Here the term in $O(\alpha \Vert g_{\nu_k}\Vert^2_{L^2(\nu_k)})$ comes from the proof of Lemma~\ref{lem:sufficientdescrease} (note that the iterates remain in a sublevel of $J$ for $\alpha$ small enough). As for the relative entropy term, we have, using the convexity inequality $\exp(u)\geq 1+ u$,
\begin{align*}
\frac1{\alpha}\Hh(\nu_{k+1},\nu_k) &= \frac1\alpha \int  \left( \exp(-2\alpha J'_{\nu_k}) (-2\alpha J'_{\nu_k}- 1) + 1 \right)\d\nu_{k}\\
&\geq  \frac1\alpha \int \left(4\alpha^2 \vert J'_{\nu_k}\Vert^2 -1 +1\right)\d\nu_k =   \int 4\alpha\vert J'_{\nu_k}\vert^2\d\nu_k = \Vert g_{\nu_k}\Vert^2_{L^2(\nu_k)}.
\end{align*}
We use this inequality in place of the strong convexity of the mirror function used in the usual proof of mirror descent (because there is no Pinsker inequality on $\Mm_+(\Theta)$). Coming back to the first equality we have derived, it holds,
\begin{align*}
B_{\nu_0} \Vert \nu^\star-\nu_\epsilon\Vert_\BL^*+ \frac1{4\alpha}\left( \Hh(\nu_\epsilon,\nu_k)-\Hh(\nu_\epsilon,\nu_{t+1})\right) &\geq   J(\nu_{k+1})-J^\star + \frac14 \Vert g_{\nu_k}\Vert^2_{L^2(\nu_k)} + O(\alpha \Vert g_{\nu_k}\Vert^2_{L^2(\nu_k)})
\end{align*}
Thus for $\alpha$ small enough, it holds 
\[
 B_{\nu_0}  \Vert \nu^\star-\nu_\epsilon\Vert_\BL^*+\frac1{4\alpha}\left( \Hh(\nu_\epsilon,\nu_k)-\Hh(\nu_\epsilon,\nu_{t+1})\right) \geq   J(\nu_{k+1})-J^\star.
 \]
 Summing over $K$ iterations and dividing by $K$, we get
\[
\left( \frac1K \sum_{k=1}^{K} J(\nu_{k}) \right) - J^\star \leq \frac1{4\alpha K} \Hh(\nu_\epsilon,\nu_K) + B_{\nu_0} \Vert \nu^\star-\nu_\epsilon\Vert_\BL^*.
\]
Since for $\alpha$ small enough $(J(\nu_k))_{k\geq 1}$ is decreasing  (by Lemma~\ref{lem:sufficientdescrease}), the result follows.
\end{proof}

\section{Convergence rate for lower bounded densities}\label{sec:boundeddensity}
In this section, we justify the claim made in Section~\ref{sec:fullynonconvex} about the convergence without condition on $\beta/\alpha$. Let us recall the result that we want to prove.
\begin{proposition} Under {\sf (A1-3)}, for any $J_{\max}>J^\star$, there exists $C>0$ such that for any $\eta,t>0$ and $\nu_0\in \Mm_+(\Theta)$ satisfying $J(\nu_0)\leq J_{\max}$, if the projected gradient flow~\eqref{eq:projectedGF} satisfies for $0\leq s \leq t$,
\[
\nu_s\vert_{S_t} \geq \eta \vol\vert_{S_t}
\]
where $S_t = \{\theta \in \Theta\;;\; J'_{\nu_s}(\theta)\leq 0 \text{ for some } s\in {[0,t]}\}$, then 
$
J(\nu_t)-J^\star\leq \frac{C}{\sqrt{\alpha \eta t}}.
$
\end{proposition}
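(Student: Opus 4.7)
The strategy is a relative-entropy (mirror-descent style) argument comparing the flow $(\nu_s)_{s\geq 0}$ to a suitably smoothed version of the minimizer $\nu^\star$ (which exists by Proposition~\ref{prop:optimalitycondition} under {\sf (A3)}).

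For a smoothing parameter $\delta>0$ to be optimized, I would construct a surrogate $\tilde\nu^{\star,\delta}\in\Mm_+(\Theta)$ by replacing each atom $r_i^2\delta_{\theta_i}$ of a sparse minimizer by a uniform measure of the same mass on the geodesic ball $B(\theta_i,\delta)$ (or, if $\nu^\star$ is non-sparse, by convolving $\nu^\star$ with a bump of scale $\delta$). Because $J'_{\nu^\star}$ attains its minimum $0$ at each support point and $\phi,R$ are $\mathcal{C}^2$ under {\sf (A1)}, a second-order expansion in the spirit of Proposition~\ref{prop:localexpansion} gives $J(\tilde\nu^{\star,\delta})-J^\star\leq C\delta^2$, while $\Vert d\tilde\nu^{\star,\delta}/d\vol\Vert_\infty\leq C'\delta^{-d}$. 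I would also arrange (by intersecting the supports of the bumps with $S_t$ and redistributing the excess mass, at an additional $O(\delta^2)$ cost in the objective) that $\spt\tilde\nu^{\star,\delta}\subset S_t$, so that on this support the hypothesis $\nu_s\geq\eta\vol$ holds for all $s\in[0,t]$.

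Next, I would use $\Phi(s):=\Hh(\tilde\nu^{\star,\delta},\nu_s)$ as a Lyapunov function. Differentiating along the PDE~\eqref{eq:projectedGF} and integrating the divergence term by parts yields
\begin{equation*}
\frac{d\Phi}{ds} = -4\alpha \int J'_{\nu_s}\, d(\nu_s-\tilde\nu^{\star,\delta}) + \beta\int \nu_s\,\nabla\!\left(\frac{d\tilde\nu^{\star,\delta}}{d\nu_s}\right)\cdot\nabla J'_{\nu_s}.
\end{equation*}
By convexity of $J$, the first term is bounded above by $-4\alpha(J(\nu_s)-J(\tilde\nu^{\star,\delta}))=-4\alpha(J(\nu_s)-J^\star)+O(\alpha\delta^2)$. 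Integrating on $[0,t]$, using $\Phi(t)\geq 0$, the monotonicity of $s\mapsto J(\nu_s)$, and the estimate $\Phi(0)=O(|\log\eta|+d|\log\delta|)$, I would get a bound of the form
\begin{equation*}
J(\nu_t)-J^\star \leq \frac{\Phi(0)}{4\alpha t} + C\delta^2 + \frac{1}{\alpha}\cdot\beta M(\eta,\delta),
\end{equation*}
where $M(\eta,\delta)$ arises from the diffusion term. Optimizing $\delta$ (balancing $\delta^2$ against $\log(1/\delta)/(\alpha t)$) then gives the claimed rate.

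\textbf{Main obstacle.} Controlling the diffusion term $\beta\int \nu_s\nabla(d\tilde\nu^{\star,\delta}/d\nu_s)\cdot\nabla J'_{\nu_s}$ is the crux. A naive bound produces an $\alpha$-independent constant drift that ruins the long-time behavior. The density lower bound $\nu_s\geq\eta$ on $\spt\tilde\nu^{\star,\delta}$ is precisely what lets one estimate $\nabla(d\tilde\nu^{\star,\delta}/d\nu_s)$ in $L^2(\nu_s)$; I expect the correct handling is a Cauchy--Schwarz split that trades $\beta$-smoothness against a factor $1/\sqrt{\eta}$, absorbing part of the diffusion into the first, strictly negative, convexity term. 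This is what forces the final rate to scale as $1/\sqrt{\alpha\eta t}$ rather than the $1/t$ scaling that would hold for $\beta=0$ alone. The remaining steps—uniform boundedness of smoothness constants on the sublevel set $\{J\leq J_{\max}\}$ (using the boundedness of $\nabla R$ in {\sf (A1)} and the descent property of the flow), and optimization of $\delta$—are then routine.
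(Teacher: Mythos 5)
Your approach is genuinely different from the paper's, and it has a real gap that I do not think can be closed as written.

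The paper does not use a relative-entropy Lyapunov argument here at all. It starts from the convexity estimate
$J(\nu_t)-J^\star \leq \int J'_{\nu_t}\,\d\nu_t - \int J'_{\nu_t}\,\d\nu^\star$,
bounds the first term by Jensen's inequality as $\int J'_{\nu_t}\d\nu_t \lesssim \big(-\tfrac{1}{\alpha}\tfrac{\d}{\d t}J(\nu_t)\big)^{1/3}$, and bounds the second by $\nu^\star(\Theta)\cdot v_t$ with $v_t = \min\{0,\min_\theta J'_{\nu_t}(\theta)\}$. The density lower bound on $S_t$ is then used to show $|v_t|^3 \lesssim -\tfrac{1}{\alpha\eta}\tfrac{\d}{\d t}J(\nu_t)$ (via $\Theta_t = \{J'_{\nu_t}\le v_t/2\}\subset S_t$, $\nu_t(\Theta_t)\ge\eta\vol(\Theta_t)$, and the Lipschitz bound $\vol(\Theta_t)\gtrsim |v_t|$). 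This yields a differential inequality $h \leq C^{-1/3}(-h')^{1/3}$ for $h(t)=J(\nu_t)-J^\star$ which integrates directly to $h(t)\lesssim (\alpha\eta t)^{-1/2}$. No mirror map, no surrogate measure, no smoothing scale $\delta$.

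The gap in your plan is exactly where you flag it, and it is not a routine Cauchy--Schwarz. The diffusion contribution to $\tfrac{\d}{\d s}\Hh(\tilde\nu^{\star,\delta},\nu_s)$, after integrating by parts, involves $\nu_s\nabla(\d\tilde\nu^{\star,\delta}/\d\nu_s) = \nabla\tilde\nu^{\star,\delta} - \tilde\nu^{\star,\delta}\,\nabla\log\nu_s$. The pointwise lower bound $\nu_s\geq\eta\vol$ on $S_t$ controls $\d\tilde\nu^{\star,\delta}/\d\nu_s$ in $L^\infty$ but gives no control whatsoever on $\nabla\log\nu_s$; a measure can be bounded below and still have arbitrarily large log-density gradient. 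The only way the paper avoids a $\nabla\log\nu_s$ term in its Lemma~\ref{lem:mirrordecrease} is by letting the comparison measure $\nu^\epsilon_s$ itself flow by the transport field so the diffusion term cancels identically; that device trades the problem for a drift $\beta B^2 t$ that grows in $t$ and therefore cannot give the present proposition without assuming $\beta$ small. A further sign that the computation has not actually been carried through: optimizing $\delta$ in your announced bound gives a rate of order $(\log(1/\eta)+d\log(1/\delta))/(\alpha t)$, which has the wrong scaling in all of $\alpha$, $\eta$ and $t$ compared with the target $(\alpha\eta t)^{-1/2}$; the claim that the missing Cauchy--Schwarz step ``forces'' the latter rate is asserted, not derived. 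Finally, the use of Proposition~\ref{prop:localexpansion} to control $J(\tilde\nu^{\star,\delta})-J^\star$ is not available: that proposition requires {\sf(A4-5)}, while the present statement only assumes {\sf(A1-3)} and in particular need not have a sparse or non-degenerate minimizer.
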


\begin{proof}
Following~\cite{wei2019regularization}, we start with the convexity inequality
\begin{align*}
J(\nu_t)-J^\star \leq \int J'_{\nu_t}\d\nu_t - \int J'_{\nu_t}\d\nu^\star .
\end{align*}
Let us control these two terms separately. On the one hand, one has by Jensen's inequality
\begin{align*}
\left(\int J'_{\nu_t}\d\nu_t \right)^2 =(\nu_t(\Theta))^2\left(\frac{1}{\nu_t(\Theta)}\int J'_{\nu_t}\d\nu_t\right)^2 \leq \nu_t(\Theta) \int  \vert J'_{\nu_t}\vert^2 \d\nu_t \leq - \frac{\nu_t(\Theta)}{4\alpha} \frac{\d}{\d t} J(\nu_t).
\end{align*}
Using the fact that on sublevels of $J$, $\nu(\Theta)$ and $\Vert g_{\nu}\Vert^2_{L^2(\nu)}$ are bounded, we have, for some $C>0$,
\[
\int J'_{\nu_t}\d\nu_t\leq C\left(-\frac{1}{\alpha}\frac{\d}{\d t} J(\nu_t)\right)^{1/3}.
\]
On the other hand, we have
\begin{align*}
\int J'_{\nu_t}\d\nu^\star \geq \nu^\star(\Theta)\min \left\{ 0, \min_{\theta\in \Theta} J'_{\nu_t}(\theta)\right\} \eqqcolon \nu^\star(\Theta)\cdot v_t.
\end{align*}
where the last equality defines $v_t\leq 0$. Using the gradient flow structure, let us show that a non-zero $v_t$ and a lower bound $\eta$ on the density of $\nu_t$ (at least on the set $\{J'_{\nu_t}\leq 0$\}) guarantees a decrease of the objective. Indeed, letting $\Theta_t = \{\theta \in \Theta\;;\; J'_{\nu_t}(\theta) \leq v_t /2\}$ (which could be empty), we get
\begin{align*}
- \frac{\d}{\d t} J(\nu_t)\geq 4\alpha \int_{\Theta_t} \vert J'_{\nu_t}\vert^2 \d\nu_t \geq 4\alpha (v_t/2)^2 \nu_t(\Theta_t) = \alpha \cdot v_t^2 \cdot\eta\cdot \vol(\Theta_t).
\end{align*}
Moreover, the Lipschitz regularity of $J'_{\nu}$ is bounded on sublevels of $J$, and thus along gradient flow trajectories, so there exists $C'>0$ such that $\vol(\Theta_t)\geq C'\cdot \vert v_t\vert$. It follows
\[
\vert v_t\vert^3 \leq -\frac{1}{C'\alpha\eta}\frac{\d}{\d t}J(\nu_t) \quad \Rightarrow \quad v_t \geq -\left(-\frac{1}{C'\alpha\eta}\frac{\d}{\d t}J(\nu_t)\right)^{1/3}.
\]
Coming back to our first inequality, we have
\[
J(\nu_t)-J^\star \leq C\left(-\frac{1}{\alpha}\frac{\d}{\d t} J(\nu_t)\right)^{1/3} + \left(-\frac{1}{C'\alpha\eta}\frac{\d}{\d t}J(\nu_t)\right)^{1/3} \leq \frac{C''}{(\alpha\eta)^{1/3}}\left(-\frac{\d}{\d t}J(\nu_t)\right)^{1/3}
\]
for some $C''>0$ that,  given $J(\nu_0)$, is independent of $\alpha,\eta$ and $\nu_t$. It remains to remark that a continuously differentiable and positive function $h$ that satisfies $h(t)\leq C^{-1/3} \cdot (-h'(t))^{1/3}$ satisfies $C\leq -h'(t)/h(t)^3 = \frac12  \frac{\d}{\d t}(h(t)^{-2})$ and, after integrating between $0$ and $t$, $h(t)\leq \left(2Ct +h(0)^{-2}\right)^{-1/2}\leq \frac{1}{\sqrt{2Ct}}$. We conclude by taking $h(t)=J(\nu_t)-J^\star$ and $C\propto \alpha \eta$.
\end{proof}
\end{document}